\title{The K-theory of the C*-algebras of $2$-rank graphs associated to complete bipartite graphs}
\author{S.A. Mutter \\
\textit{\normalsize School of Mathematics, Statistics and Physics, Newcastle University} \\
\texttt{s.a.mutter2@ncl.ac.uk}
}
\newcommand{\addresses}{{% additional braces for segregating \footnotesize
		\bigskip
		\footnotesize
		\begin{center}
			\textsc{School of Mathematics, Statistics and Physics,\\ Newcastle University, Newcastle upon Tyne NE1 7RU, UK}\par\nopagebreak
			\texttt{s.a.mutter2@newcastle.ac.uk}
		\end{center}
}}
\theoremstyle{plain}
\newtheorem{thm}{Theorem}[section]
\newtheorem{prop}[thm]{Proposition}
\newtheorem{lem}[thm]{Lemma}
\newtheorem{corl}[thm]{Corollary}
\theoremstyle{definition}
\newtheorem{defn}[thm]{Definition}
\newtheorem{ex}[thm]{Example}
\newcommand{\inv}{^{-1}}
\DeclareMathOperator{\lcm}{lcm}
\DeclareMathOperator{\coker}{coker}
\DeclareMathOperator{\rk}{rk}
\DeclareMathOperator{\Ob}{Ob}
\DeclareMathOperator{\Hom}{Hom}
\DeclareMathOperator{\tors}{tor}
\newcommand{\cst}{C^{\star}}
\DeclareMathOperator{\ord}{ord}
\DeclareMathOperator{\im}{im}
\begin{document}

\maketitle

\begin{abstract}
	Using a result of Vdovina, we may associate to each complete connected bipartite graph $\kappa$ a $2$-dimensional square complex, which we call a tile complex, whose link at each vertex is $\kappa$. We regard the tile complex in two different ways, each having a different structure as a $2$-rank graph. To each $2$-rank graph is associated a universal $\cst$-algebra, for which we compute the K-theory, thus providing a new infinite collection of $2$-rank graph algebras with explicit K-groups. We determine the homology of the tile complexes, and give generalisations of the procedures to complexes and systems consisting of polygons with a higher number of sides.
\end{abstract}

\section{Introduction}

In \cite{Vdo2002}, it was shown how to construct a two-dimensional CW-complex whose link at each vertex is a complete bipartite graph. In \cite{KumPas2000}, generalising the work of \cite{RobSte1999}, certain combinatorial objects called \textit{higher-rank graphs} were defined and then associated a generalisation of a graph algebra \cite[Chapter 1]{Rae2005}. We combine these two methods to build an infinite family of $\cst$-algebras corresponding to complete bipartite graphs.

We begin in Section \ref{S_tile} by detailing Vdovina's construction of the CW-complexes, which we call \textit{tile complexes}; the data used to build these is called a \textit{tile system}. In Sections \ref{S_cst} and \ref{S_unpointed}, we associate adjacency matrices to the tile systems in two different ways: by considering the tiles as pointed, and as unpointed geometrical objects. By the fact that the adjacency matrices commute, they characterise the structure of a higher-rank graph, and as such induce a universal $\cst$-algebra, the \textit{higher-rank graph algebra}. We use a result of \cite{Eva2008} to calculate the K-groups of these algebras (Theorems \ref{pointed}, \ref{unpointed}).

In the brief Section \ref{S_homology}, we show that the tile complexes have torsion-free homology groups given by $H_1 \cong H_2 \cong \mathbb{Z}^{\alpha + \beta - 2}$, and $H_n = 0$ otherwise.

Finally, we explore extensions of these methods to $2t$\textit{-gon systems}, constructed analogously from two-dimensional complexes consisting entirely of $2t$-gons. In all, we associate $2$-rank graph $\cst$-algebras to five systems, and compute their K-theory in the following theorems:

\begin{enumerate}[label=(\roman*)]
	\item Pointed and unpointed tile systems (Theorems \ref{pointed}, \ref{unpointed}),
	\item Pointed and unpointed $2t$-gon systems, for even $t$ (Theorem \ref{pointed_polygon}, Corollary \ref{unpointed_polygon}),
	\item Pointed $2t$-gon systems, for arbitrary $t$ (Theorem \ref{new_polygons}),
\end{enumerate}
	
The respective systems in (ii) directly generalise those in (i), however there is another intuitive way of building $2t$-gon systems from polyhedra, (iii). We discuss the naturality of these generalisations in Section \ref{S_polygons}.

Our approach differs from that of Robertson and Steger, who focussed on complexes with one vertex. Furthermore, we use the terminology of higher-rank graphs in order to demonstrate the large intersection between the fields of $k$-graphs and geometry.

Throughout the paper, $\alpha$, $\beta$ are positive integers, and $\kappa(\alpha,\beta)$ denotes the complete connected bipartite graph on $\alpha$ white and $\beta$ black vertices.

\section{The tile system associated to a bipartite graph} \label{S_tile}

\begin{defn}\label{polyhedron}
	Let $t \in \mathbb{Z}$ with $t \geq 2$, and let $A_1,\ldots ,A_n$ be a sequence of solid $t$-gons, with directed edges labelled from some set $\mathcal{U}$. By gluing together like-labelled edges (respecting their direction), we obtain a two-dimensional complex $P$. We call such a complex a $t$\textbf{-polyhedron}.
	
	The \textbf{link} at a vertex $z$ of $P$ is the graph obtained as the intersection of $P$ with a small $2$-sphere centred at $z$.
\end{defn}

\begin{thm}[Vdovina, 2002]\label{vdovina}
	Let $G$ be a connected bipartite (undirected) graph on $\alpha$ white and $\beta$ black vertices, with edge set $E(G)$. Then we can construct a $2t$-polyhedron $P(G)$ which has $G$ as the link at each vertex, for each $t \geq 1$.
\end{thm}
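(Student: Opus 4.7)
The plan is to construct $P(G)$ explicitly as a one-vertex $2t$-polyhedron, so that the sole vertex $z$ will carry the desired link $G$. Label the white and black vertices of $G$ by the symbols $x_1,\ldots,x_\alpha$ and $y_1,\ldots,y_\beta$, introduce one oriented edge of $P(G)$ per symbol, and identify all endpoints to the single vertex $z$. Under this setup, the vertex set of the link at $z$ automatically coincides with $\{x_1,\ldots,x_\alpha\} \cup \{y_1,\ldots,y_\beta\}$, so the combinatorial problem reduces to choosing a multiset of $2t$-gons whose corners at $z$ realise precisely the edges of $G$ in the link.

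To force the link to be bipartite with the correct partition, every $2t$-gon is required to have a boundary word that alternates strictly between $x$-letters and $y$-letters. With this alternation in place, each corner of each $2t$-gon contributes an edge in the link between some $x_i$ and some $y_j$, and the edges of the link are in bijection with the corners, recorded as unordered pairs of consecutive boundary labels. If the multiset of $2t$-gons is chosen so that for each edge $ij \in E(G)$ exactly one corner (across all $2t$-gons and all positions along their boundaries) reads the pair $\{x_i, y_j\}$, and so that no corner realises a pair outside $E(G)$, then the link at $z$ is precisely $G$. The required collection can be produced from a closed alternating walk in $G$ that traverses each edge once in each direction, cut into blocks of length $2t$; existence of such a walk uses only the bipartiteness and connectedness of $G$.

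The hard part is arranging the chosen collection of boundary words into a genuine CW-complex. Each edge label must appear equally often in each orientation when summed over all $2t$-gon boundaries, so that the like-labelled edge identifications are consistent; this is an Euler-type balance condition that is forced by working with closed walks of even length. One must also check that the resulting space does not split into several sheets sharing $z$, that is, that the corner-by-corner description actually assembles into a connected complex whose link is $G$ rather than a disjoint union of copies of $G$; here the connectedness of $G$ is used to weave the corners together. Once these two consistency checks are made, verification of the link condition at $z$ is a direct reading-off of corners from the chosen boundary words, proving the theorem.
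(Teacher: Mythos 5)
Your construction breaks at the very first step: in a one-vertex complex in which every edge is a loop at $z$, the link at $z$ has one vertex for each \emph{end} of each edge, since a small sphere centred at $z$ meets each loop in two points. With $\alpha+\beta$ loops the link therefore has $2(\alpha+\beta)$ vertices, not $\alpha+\beta$, so the vertex set of the link does not ``automatically coincide'' with $\{x_1,\dots,x_\alpha\}\cup\{y_1,\dots,y_\beta\}$, and the reduction of the theorem to choosing corners realising $E(G)$ collapses. This is exactly why one-vertex square complexes (the Robertson--Steger and Wise/Burger--Mozes VH settings) have links on $2m+2n$ vertices; to realise a prescribed $G$ on $\alpha+\beta$ vertices as the link of a one-vertex complex you would in addition have to pair up the vertices of $G$ into the two ends of each loop, which is neither automatic nor in general possible, and is not something your argument addresses.

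There are further counting problems downstream: a closed walk traversing each edge of $G$ once in each direction produces each pair $\{x_i,y_j\}$ at two corners rather than one; cutting such a walk into blocks of length $2t$ needs $t$ to divide $|E(G)|$, which fails in general (already for $\kappa(2,2)$ with $t=3$); and the seam corner of a block (last letter against first letter) is not a consecutive pair of the original walk, so the blocks need not read off only edges of $G$. Vdovina's construction, which the paper follows, avoids all of this: it takes one $2t$-gon $A_e=[u_p^1,v_q^1,\dots,u_p^t,v_q^t]$ for each edge $e=u_pv_q$, using $2t$ labelled copies of each vertex of $G$ (plus barred partners for orientation reversal). After gluing like-labelled sides, the complex has several vertices, at each of which exactly $\alpha+\beta$ edge-ends meet --- one for each vertex of $G$ --- and the corner of $A_e$ at each such vertex contributes precisely the link edge joining $u_p$ to $v_q$; hence the link is $G$ at every vertex, with no divisibility or parity constraints on $|E(G)|$ or $t$.
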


We reference \cite{Vdo2002}, in which it was shown how to build such a $2t$-polyhedron. The general method is as follows:

Write $U'=\lbrace u_1 , \ldots , u_\alpha\rbrace$ for the set of white vertices of $G$, and $V'=\lbrace v_1,\ldots , v_\beta\rbrace$ for the set of black vertices.

Let $U$ be a set with $2t\alpha$ elements, indexed $u_i^1, u_i^2, \ldots , u_i^t, \bar{u}_i^1, \bar{u}_i^2,\ldots \bar{u}_i^t$ for each $u_i \in U'$, and let $V$ be the corresponding set with $2t\beta$ elements. Define fixed-point-free involutions $u_i^r \mapsto \bar{u}_i^r$ and $v_i^r \mapsto \bar{v}_i^r$ in $U$ and $V$, respectively.

Each edge of the graph $G$ joins an element of $U'$ to an element of $V'$; for each edge $e = u_p v_q$, we construct a $2t$-gon $A_e$ with a distinguished base vertex. Label the boundary of $A_e$ anticlockwise, starting from the base, by the sequence $u_p^1,v_q^1,u_p^2,v_q^2,\ldots , u_p^t,v_q^t$, giving each side of the boundary a forward-directed arrow. We denote this pointed oriented $2t$-gon by $A_e = \big[ u_p^1,v_q^1,\ldots , u_p^t,v_q^t \big]$.  Then, glue the $A_e$ together in the manner of Definition \ref{polyhedron} in order to obtain a $2t$-polyhedron $P(G)$ (Figure \ref{fig_polyhedron}).

\begin{figure}[h]
	\begin{center}
		\begin{tikzpicture}
		\draw[->,thick] (0,0) -- (0.75,0);
		\draw[thick] (0.75,0) -- (1.5,0);
		\draw[->,thick] (1.5,0) -- (1.5,0.75);
		\draw[thick] (1.5,0.75) -- (1.5,1.5);
		\draw[thick] (0,0) -- (0,0.75);
		\draw[<-,thick] (0,0.75) -- (0,1.5);
		\draw[thick] (0,1.5) -- (0.75,1.5);
		\draw[<-,thick] (0.75,1.5) -- (1.5,1.5);
		
		\draw[] (0.75,0.75) node{$A_{u_1 v_2}$};
		\draw[anchor=north] (0.75,-0.1) node{$u_1^1$};
		\draw[anchor=west] (1.6,0.75) node{$v_2^1$};
		\draw[anchor=south] (0.75,1.6) node{$u_1^2$};
		\draw[anchor=east] (-0.1,0.75) node{$v_2^2$};
		
		\filldraw (0,0) circle (2pt);
		
		\begin{scope}[xshift=3.5cm]
		\draw[->,thick] (0,0) -- (0.75,0);
		\draw[thick] (0.75,0) -- (1.5,0);
		\draw[->,thick] (1.5,0) -- (1.5,0.75);
		\draw[thick] (1.5,0.75) -- (1.5,1.5);
		\draw[thick] (0,0) -- (0,0.75);
		\draw[<-,thick] (0,0.75) -- (0,1.5);
		\draw[thick] (0,1.5) -- (0.75,1.5);
		\draw[<-,thick] (0.75,1.5) -- (1.5,1.5);
		
		\draw[] (0.75,0.75) node{$A_{u_1 v_3}$};
		\draw[anchor=north] (0.75,-0.1) node{$u_1^1$};
		\draw[anchor=west] (1.6,0.75) node{$v_3^1$};
		\draw[anchor=south] (0.75,1.6) node{$u_1^2$};
		\draw[anchor=east] (-0.1,0.75) node{$v_3^2$};
		
		\filldraw (0,0) circle (2pt);
		\end{scope}
		
		\begin{scope}[xshift=7cm]
		\draw[->,thick] (0,0) -- (0.75,0);
		\draw[thick] (0.75,0) -- (1.5,0);
		\draw[->,thick] (1.5,0) -- (1.5,0.75);
		\draw[thick] (1.5,0.75) -- (1.5,1.5);
		\draw[thick] (0,0) -- (0,0.75);
		\draw[<-,thick] (0,0.75) -- (0,1.5);
		\draw[thick] (0,1.5) -- (0.75,1.5);
		\draw[<-,thick] (0.75,1.5) -- (1.5,1.5);
		
		\draw[] (0.75,0.75) node{$A_{u_2 v_1}$};
		\draw[anchor=north] (0.75,-0.1) node{$u_2^1$};
		\draw[anchor=west] (1.6,0.75) node{$v_1^1$};
		\draw[anchor=south] (0.75,1.6) node{$u_2^2$};
		\draw[anchor=east] (-0.1,0.75) node{$v_1^2$};
		
		\filldraw (0,0) circle (2pt);
		\end{scope}
		\end{tikzpicture}
	\end{center}
	\caption{Construction of a $2t$-polyhedron: Give each side of a sequence of solid $2t$-gons a direction and a label from one of two sets $U$, $V$, then glue together corresponding sides with respect to their direction.}\label{fig_polyhedron}
\end{figure}
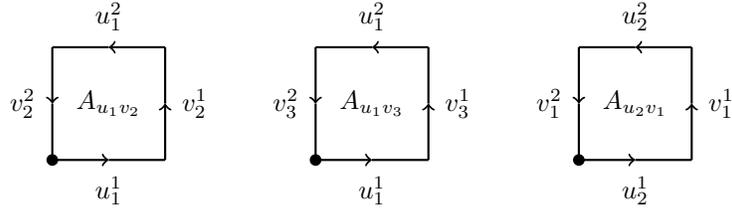

\begin{defn}\label{def_tiles}
	In this paper, we mainly concern ourselves with $4$-polyhedra, that is, those constructed by gluing together squares. We will refer to $4$-polyhedra as \textbf{tile complexes}. For a connected bipartite graph $G$, write $TC(G)$ for the tile complex $P(G)$, and define the set
	\begin{multline}\label{1}
	\mathcal{S}(G):= \big\lbrace A_e = \big[ u_p^1, v_q^1, u_p^2, v_q^2 \big], \big[ \bar{u}_p^1, \bar{v}_q^2, \bar{u}_p^2, \bar{v}_q^1 \big], \\
	\big[ u_p^2,v_q^2,u_p^1,v_q^1 \big], \big[ \bar{u}_p^2, \bar{v}_q^1, \bar{u}_p^1, \bar{v}_q^2 \big] \bigm\vert e = u_p v_q \in E(G) \big\rbrace .
	\end{multline}

	We call elements of $\mathcal{S}(G)$ \textbf{pointed tiles}. We define an equivalence relation which, for each $A_e$, identifies the four corresponding pointed tiles in (\ref{1}). We denote by $\mathcal{S}'(G)$ the quotient of $\mathcal{S}(G)$ by this relation, and we use round brackets, writing $A_e' = \big( u_p^1,v_q^1,u_p^2,v_q^2 \big)$ for the equivalence class of $A_e$ in $\mathcal{S}'(G)$. Then $\mathcal{S}'(G)$ is the set of geometric squares (that is, disregarding basepoint and orientation) of which $TC(G)$ consists. We call elements of $\mathcal{S}'(G)$ \textbf{unpointed tiles}.

	Notice that by placing the basepoint at the bottom-left vertex, we can arrange that the horizontal sides of each pointed tile be labelled by elements of $U$, and the vertical sides by elements of $V$, such that $\mathcal{S}(G) \subseteq U \times V \times U \times V$. Indeed, the four tuples in (\ref{1}) correspond to the four symmetries of a pointed tile which preserve this property (Figure \ref{fig_tiles}).

	Note also that by design, any two pointed tiles in $\mathcal{S}(G)$ are distinct, and any two adjacent sides of a tile uniquely determine the remaining two sides.
\end{defn}

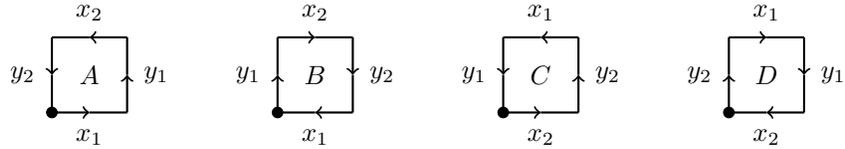
\begin{figure}[h]
	\begin{center}
		\begin{tikzpicture}
		\draw[->,thick] (0,0) -- (0.5,0);
		\draw[thick] (0.5,0) -- (1,0);
		\draw[->,thick] (1,0) -- (1,0.5);
		\draw[thick] (1,0.5) -- (1,1);
		\draw[thick] (0,0) -- (0,0.5);
		\draw[<-,thick] (0,0.5) -- (0,1);
		\draw[thick] (0,1) -- (0.5,1);
		\draw[<-,thick] (0.5,1) -- (1,1);
		
		\draw[] (0.5,0.5) node{$A$};
		\draw[anchor=north] (0.5,-0.1) node{$x_1$};
		\draw[anchor=west] (1.1,0.5) node{$y_1$};
		\draw[anchor=south] (0.5,1.1) node{$x_2$};
		\draw[anchor=east] (-0.1,0.5) node{$y_2$};
		
		\filldraw (0,0) circle (2pt);
		
		\draw[thick,shift={(3,0)}] (0,0) -- (0.5,0);
		\draw[<-,thick,shift={(3,0)}] (0.5,0) -- (1,0);
		\draw[thick,shift={(3,0)}] (1,0) -- (1,0.5);
		\draw[<-,thick,shift={(3,0)}] (1,0.5) -- (1,1);
		\draw[->,thick,shift={(3,0)}] (0,0) -- (0,0.5);
		\draw[thick,shift={(3,0)}] (0,0.5) -- (0,1);
		\draw[->,thick,shift={(3,0)}] (0,1) -- (0.5,1);
		\draw[thick,shift={(3,0)}] (0.5,1) -- (1,1);
		
		\draw[shift={(3,0)}] (0.5,0.5) node{$B$};
		\draw[anchor=north,shift={(3,0)}] (0.5,-0.1) node{$x_1$};
		\draw[anchor=west,shift={(3,0)}] (1.1,0.5) node{$y_2$};
		\draw[anchor=south,shift={(3,0)}] (0.5,1.1) node{$x_2$};
		\draw[anchor=east,shift={(3,0)}] (-0.1,0.5) node{$y_1$};
		
		\filldraw[shift={(3,0)}] (0,0) circle (2pt);
		
		\draw[->,thick,shift={(6,0)}] (0,0) -- (0.5,0);
		\draw[thick,shift={(6,0)}] (0.5,0) -- (1,0);
		\draw[->,thick,shift={(6,0)}] (1,0) -- (1,0.5);
		\draw[thick,shift={(6,0)}] (1,0.5) -- (1,1);
		\draw[thick,shift={(6,0)}] (0,0) -- (0,0.5);
		\draw[<-,thick,shift={(6,0)}] (0,0.5) -- (0,1);
		\draw[thick,shift={(6,0)}] (0,1) -- (0.5,1);
		\draw[<-,thick,shift={(6,0)}] (0.5,1) -- (1,1);
		\draw[shift={(6,0)}] (0.5,0.5) node{$C$};
		\draw[anchor=north,shift={(6,0)}] (0.5,-0.1) node{$x_2$};
		\draw[anchor=west,shift={(6,0)}] (1.1,0.5) node{$y_2$};
		\draw[anchor=south,shift={(6,0)}] (0.5,1.1) node{$x_1$};
		\draw[anchor=east,shift={(6,0)}] (-0.1,0.5) node{$y_1$};
		
		\filldraw[shift={(6,0)}] (0,0) circle (2pt);
		
		\draw[thick,shift={(9,0)}] (0,0) -- (0.5,0);
		\draw[<-,thick,shift={(9,0)}] (0.5,0) -- (1,0);
		\draw[thick,shift={(9,0)}] (1,0) -- (1,0.5);
		\draw[<-,thick,shift={(9,0)}] (1,0.5) -- (1,1);
		\draw[->,thick,shift={(9,0)}] (0,0) -- (0,0.5);
		\draw[thick,shift={(9,0)}] (0,0.5) -- (0,1);
		\draw[->,thick,shift={(9,0)}] (0,1) -- (0.5,1);
		\draw[thick,shift={(9,0)}] (0.5,1) -- (1,1);
		\draw[shift={(9,0)}] (0.5,0.5) node{$D$};
		\draw[anchor=north,shift={(9,0)}] (0.5,-0.1) node{$x_2$};
		\draw[anchor=west,shift={(9,0)}] (1.1,0.5) node{$y_1$};
		\draw[anchor=south,shift={(9,0)}] (0.5,1.1) node{$x_1$};
		\draw[anchor=east,shift={(9,0)}] (-0.1,0.5) node{$y_2$};
		
		\filldraw[shift={(9,0)}] (0,0) circle (2pt);
		\end{tikzpicture}
	\end{center}
	\caption{Visualisation of tiles: $A=[x_1,y_1,x_2,y_2]$, $B=[\bar{x}_1,\bar{y}_2,\bar{x}_2,\bar{y}_1]$, etc. These four pointed squares represent different pointed tiles, but the same unpointed tile.}\label{fig_tiles}
\end{figure}

\begin{defn}\label{tile_system}
	Let $G$ be a connected bipartite graph on $\alpha$ white and $\beta$ black vertices. Let $U$, $V$ be sets with $|U| = 4\alpha$, $|V| = 4\beta$, as constructed above, and let $\mathcal{S} = \mathcal{S}(G) \subseteq U \times V \times U \times V$ be the corresponding set of pointed tiles. We call the datum $(G, U, V, \mathcal{S})$ a \textbf{tile system}.
\end{defn}

This construction is closely related to, and indeed modelled on, that of a \textit{VH-datum}, introduced in \cite{Wis1996} and developed further in \cite{BurMoz2000}.

\section{The $\cst$-algebra corresponding to a tile system} \label{S_cst}

\begin{defn}\label{adjacency_matrices}
	Let $(G, U, V, \mathcal{S})$ be a tile system, and let $A=[x_1,y_1,x_2,y_2]$ and $B=[x_3,y_3,x_4,y_4]$ be pointed tiles in $\mathcal{S}$. We define two $4\alpha\beta \times 4\alpha\beta$ matrices $M_1,M_2$ with $AB$-th entry $M_i(A,B)$ as follows:
	\[
	\begin{array}{r}
	M_1(A,B) = \left\{ \begin{array}{l} 
	1 \quad \text{if $y_1 = \bar{y}_4$ and $x_1 \neq \bar{x}_3$,} \\
	0 \quad \text{otherwise,} 
	\end{array}
	\right.\\
	\text{}\\
	M_2(A,B) = \left\{ \begin{array}{l} 
	1 \quad \text{if $x_2 = \bar{x}_3$ and $y_1 \neq \bar{y}_3$,} \\
	0 \quad \text{otherwise,} 
	\end{array}
	\right.
	\end{array}
	\]
	as demonstrated in Figure \ref{fig_adjacency}. We call $M_1$ the \textbf{horizontal adjacency matrix} and $M_2$ the \textbf{vertical adjacency matrix}. If $M_i(A,B) = 1$, we say that $B$ is \textbf{horizontally-} or \textbf{vertically-adjacent} to $A$, respectively.
\end{defn}

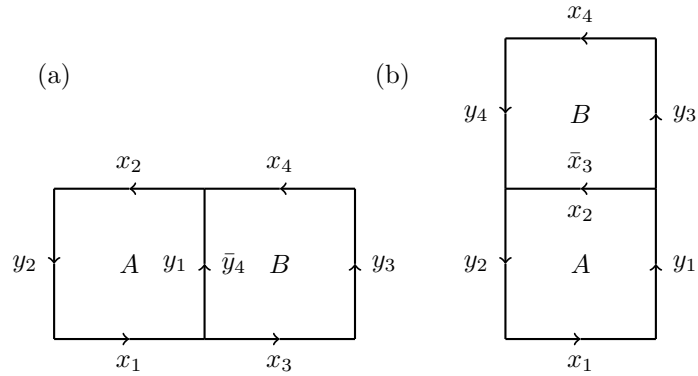
\begin{figure}[h]
	\begin{center}
		\begin{tikzpicture}
		\draw[->,thick] (0,0) -- (1,0);
		\draw[thick] (1,0) -- (2,0);
		\draw[->,thick] (2,0) -- (2,1);
		\draw[thick] (2,1) -- (2,2);
		\draw[thick] (0,0) -- (0,1);
		\draw[<-,thick] (0,1) -- (0,2);
		\draw[thick] (0,2) -- (1,2);
		\draw[<-,thick] (1,2) -- (2,2);
		\draw[->,thick] (2,0) -- (3,0);
		\draw[thick] (3,0) -- (4,0);
		\draw[->,thick] (4,0) -- (4,1);
		\draw[thick] (4,1) -- (4,2);
		\draw[thick] (2,2) -- (3,2);
		\draw[<-,thick] (3,2) -- (4,2);
		\draw[anchor=north] (1,-0.1) node{$x_1$};
		\draw[anchor=west] (2.1,1) node{$\bar{y}_4$};
		\draw[anchor=south] (1,2.1) node{$x_2$};
		\draw[anchor=east] (-0.1,1) node{$y_2$};
		\draw[anchor=east] (1.9,1) node{$y_1$};
		\draw[anchor=north] (3,-0.1) node{$x_3$};
		\draw[anchor=west] (4.1,1) node{$y_3$};
		\draw[anchor=south] (3,2.1) node{$x_4$};
		
		\draw[->,thick,shift={(6,0)}] (0,0) -- (1,0);
		\draw[thick,shift={(6,0)}] (1,0) -- (2,0);
		\draw[->,thick,shift={(6,0)}] (2,0) -- (2,1);
		\draw[thick,shift={(6,0)}] (2,1) -- (2,2);
		\draw[thick,shift={(6,0)}] (0,0) -- (0,1);
		\draw[<-,thick,shift={(6,0)}] (0,1) -- (0,2);
		\draw[thick,shift={(6,0)}] (0,2) -- (1,2);
		\draw[<-,thick,shift={(6,0)}] (1,2) -- (2,2);
		\draw[->,thick,shift={(6,0)}] (2,2) -- (2,3);
		\draw[thick,shift={(6,0)}] (2,3) -- (2,4);
		\draw[thick,shift={(6,0)}] (0,2) -- (0,3);
		\draw[<-,thick,shift={(6,0)}] (0,3) -- (0,4);
		\draw[thick,shift={(6,0)}] (0,4) -- (1,4);
		\draw[<-,thick,shift={(6,0)}] (1,4) -- (2,4);
		\draw[anchor=north,shift={(6,0)}] (1,-0.1) node{$x_1$};
		\draw[anchor=west,shift={(6,0)}] (2.1,1) node{$y_1$};
		\draw[anchor=south,shift={(6,0)}] (1,2.1) node{$\bar{x}_3$};
		\draw[anchor=east,shift={(6,0)}] (-0.1,1) node{$y_2$};			
		\draw[anchor=north,shift={(6,0)}] (1,1.9) node{$x_2$};
		\draw[anchor=east,shift={(6,0)}] (-0.1,3) node{$y_4$};
		\draw[anchor=west,shift={(6,0)}] (2.1,3) node{$y_3$};
		\draw[anchor=south,shift={(6,0)}] (1,4.1) node{$x_4$};
		
		\draw[] (1,1) node{$A$};
		\draw[] (3,1) node{$B$};
		\draw[] (7,1) node{$A$};
		\draw[] (7,3) node{$B$};
		
		\draw[] (0,3.5) node{(a)};
		\draw[] (4.5,3.5) node{(b)};
		
		\end{tikzpicture}
	\end{center}
	\caption{Horizontal and vertical adjacency: (a) $M_1(A,B) = 1$, (b) $M_2(A,B) = 1$.}\label{fig_adjacency}
\end{figure}

\begin{defn}\label{uce}
	Let $(G, U, V, \mathcal{S})$ be a tile system, and let $A$, $B$, $C$ be pointed tiles in $\mathcal{S}(G)$ such that $M_1(A,B) = 1$ and $M_2(A,C) = 1$. We say that the tile system $(G, U, V, \mathcal{S})$ satisfies the \textbf{Unique Common Extension Property} (\textbf{UCE property}) if there exists a \emph{unique} $D \in \mathcal{S}$ such that $M_2(B,D) = M_1(C,D) = 1$ (Figure \ref{fig_uce}).
\end{defn}

\begin{figure}[h]
	\begin{center}
		\begin{tikzpicture}
		\draw[->,thick] (0,0) -- (1,0);
		\draw[thick] (1,0) -- (2,0);
		\draw[->,thick] (2,0) -- (2,1);
		\draw[thick] (2,1) -- (2,2);
		\draw[->,thick] (0,0) -- (0,1);
		\draw[thick] (0,1) -- (0,2);
		\draw[->,thick] (0,2) -- (1,2);
		\draw[thick] (1,2) -- (2,2);
		
		\draw[->,thick] (2,2) -- (2,3);
		\draw[thick] (2,3) -- (2,4);
		\draw[->,thick] (0,2) -- (0,3);
		\draw[thick] (0,3) -- (0,4);
		\draw[->,thick] (0,4) -- (1,4);
		\draw[thick] (1,4) -- (2,4);
		
		\draw[->,thick] (2,0) -- (3,0);
		\draw[thick] (3,0) -- (4,0);
		\draw[->,thick] (4,0) -- (4,1);
		\draw[thick] (4,1) -- (4,2);
		\draw[->,thick] (2,2) -- (3,2);
		\draw[thick] (3,2) -- (4,2);
		
		\draw[->,thick,dashed] (4,2) -- (4,3);
		\draw[thick,dashed] (4,3) -- (4,4);
		\draw[->,thick,dashed] (2,4) -- (3,4);
		\draw[thick,dashed] (3,4) -- (4,4);
		
		\draw[] (1,1) node{$A$};
		\draw[] (3,1) node{$B$};
		\draw[] (1,3) node{$C$};
		\draw[] (3,3) node{$D$};
		\end{tikzpicture}
	\end{center}
	\caption{Unique Common Extension Property: Given an initial tile $A$, a horizontally-adjacent tile $B$, and a vertically-adjacent tile $C$, there is a unique tile $D$ adjacent to both $B$ and $C$.}\label{fig_uce}
\end{figure}
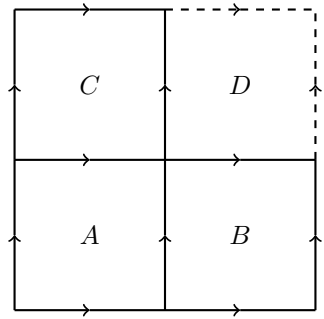

\begin{prop}\label{tile_complex_uce}
	Consider the complete bipartite graph $\kappa = \kappa(\alpha,\beta)$ on $\alpha \geq 2$ white and $\beta \geq 2$ black vertices, and let $(\kappa, U, V, \mathcal{S}(\kappa))$ be a tile system with corresponding adjacency matrices $M_1$, $M_2$. Then:
		\begin{enumerate}[label=(\roman*)]
			\item $M_1$ and $M_2$ are symmetric and commute with each other,
			\item Each row and column of $M_1$ and $M_2$ contains at least one non-zero element,
			\item $(\kappa, U, V, \mathcal{S}(\kappa))$ satisfies the UCE Property.
		\end{enumerate}
\end{prop}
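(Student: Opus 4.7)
The plan is to exploit the explicit combinatorial structure of $\mathcal{S}(\kappa)$: each pointed tile corresponds to an edge $u_p v_q \in E(\kappa)$ together with one of four ``companion'' labellings, encoding a choice of bar-status and a swap of the superscripts $\{1,2\}$. Given this choice, the horizontal edges (in $U$) carry the index $p$ and the vertical edges (in $V$) carry the index $q$, and the four labels are entirely determined. I would set up a uniform notation for the four companions at the outset so that the subsequent case checks are short.

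For (i), symmetry of $M_1$ follows from observing that, writing $A = [x_1,y_1,x_2,y_2]$ and $B = [x_3,y_3,x_4,y_4]$, the equation $y_1 = \bar{y}_4$ forces $A$ and $B$ to share a $V'$-vertex $v_q$, and within each companion the pair $(y_3,y_4)$ has a single common bar-status and uses both superscripts $\{1,2\}$ (and likewise for $(y_1,y_2)$). Consequently $y_1 = \bar{y}_4$ forces $y_3 = \bar{y}_2$, and conversely; the inequality $x_1 \neq \bar{x}_3$ is manifestly symmetric in $A$ and $B$. Symmetry of $M_2$ is identical after swapping the roles of horizontal and vertical. Commutativity $M_1 M_2 = M_2 M_1$ I would then derive from (iii): for each pair $A, D$, both $(M_1M_2)(A,D)$ and $(M_2M_1)(A,D)$ count the $2 \times 2$ UCE squares with $A$ at the bottom-left and $D$ at the top-right, with the bijection between the bottom-right and top-left completions supplied by applying the UCE construction in the two possible directions.

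For (ii), a short case check using the same companion formalism shows that the horizontal neighbours of a tile corresponding to edge $u_p v_q$ are exactly those tiles corresponding to edges $u_r v_q$ with $r \neq p$; since $\alpha \geq 2$ and $\kappa$ is complete, at least one such tile exists. Similarly the vertical neighbours correspond to edges $u_p v_t$ with $t \neq q$ and exist because $\beta \geq 2$. The column statement follows from symmetry of $M_1$ and $M_2$.

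Part (iii) carries the main substance. Given $A, B, C$ with $M_1(A,B) = M_2(A,C) = 1$, the equations $M_2(B,D) = 1$ and $M_1(C,D) = 1$ force the bottom and left edges of $D$ to be $\bar{x}_4$ and $\bar{y}_5$ respectively. Reading these through the companion formalism, they determine the vertex pair $(u_r,v_t) \in U' \times V'$ of $D$ and its companion-type simultaneously, so $D$ is unique if it exists. The only place that completeness of $\kappa = \kappa(\alpha,\beta)$ is used is here: the edge $u_r v_t$ must lie in $E(\kappa)$ for $D$ to belong to $\mathcal{S}(\kappa)$, and completeness supplies this. The remaining inequalities $y_7 \neq \bar{y}_3$ and $x_5 \neq \bar{x}_7$ translate via the companion bookkeeping into $t \neq q$ and $r \neq p$ respectively, which are precisely the inequalities supplied by the hypotheses $M_2(A,C) = 1$ and $M_1(A,B) = 1$. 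The main obstacle throughout will be keeping the bar-status and superscript bookkeeping straight, which is why investing in uniform companion notation at the start is essential.
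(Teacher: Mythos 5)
Your proposal is correct and, for parts (ii) and (iii), follows essentially the same route as the paper: an explicit description of the horizontal and vertical neighbours of a tile in terms of its underlying edge $u_pv_q$ and companion type, with uniqueness of the completion $D$ coming from the fact that two adjacent sides determine a pointed tile, existence coming from completeness of $\kappa$, and the residual inequalities ($y_3\neq\bar{y}_7$, $x_5\neq\bar{x}_7$ in your indexing) reducing to $t\neq q$ and $r\neq p$, exactly as supplied by the hypotheses. Where you genuinely diverge is commutativity. The paper computes the two-step neighbour sets directly: for $A=\big[u_i^1,v_j^1,u_i^2,v_j^2\big]$ both $(XY)_A$ and $(YX)_A$ equal $\big\lbrace\big[u_k^2,v_l^2,u_k^1,v_l^1\big] : k\neq i,\ l\neq j\big\rbrace$, each element reached through a unique intermediate, so $M_1M_2=M_2M_1$ with all entries in $\lbrace 0,1\rbrace$. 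You instead deduce commutativity from (iii). That can be made to work, but note that the UCE property applied only at the corner $A$ does not equate the path counts $(M_1M_2)(A,D)$ and $(M_2M_1)(A,D)$: a tile reachable right-then-up need not a priori admit an up-then-right factorisation. To get your bijection you must apply UCE at the corners $B$ and $C$ as well (legitimate, since (iii) holds at every tile, and symmetry of $M_1$, $M_2$ lets you reinterpret the adjacencies), obtaining maps $\mathcal{B}\to\mathcal{C}$ and $\mathcal{C}\to\mathcal{B}$, and then invoke the uniqueness clause once more to check these are mutually inverse. That is only a line or two, but it is the load-bearing step behind your phrase ``applying the UCE construction in the two possible directions'' and should be written out; alternatively, the explicit neighbour description you already use for (ii) and (iii) gives the commutation directly, as in the paper, at no extra cost.
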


\begin{proof}
	It is straightforward to verify that the matrices $M_1$ and $M_2$ are symmetric. Now, consider the pointed tile $A = \big[ u_i^1,v_j^1,u_i^2,v_j^2 \big] \in \mathcal{S}(\kappa)$ (any other tiles may be dealt with in a similar manner), and define the sets
	\[
	X_A := \lbrace T \in \mathcal{S}(\kappa) \mid M_1(A,T) = 1\rbrace, \quad Y_A := \lbrace T \in \mathcal{S}(\kappa) \mid M_2(A,T) = 1\rbrace.
	\]
	So $X_A$ comprises precisely those tiles of the form $\big[ \bar{u}_k^1, \bar{v}_j^2, \bar{u}_k^2, \bar{v}_j^1\big]$, where $k \neq i$, and $Y_A$ only those of the form $\big[ \bar{u}_i^2, \bar{v}_l^1, \bar{u}_i^1, \bar{v}_l^2\big]$, where $l \neq j$. Since $\alpha,\beta \geq 2$, these sets are non-empty.
	
	Now we can define non-empty sets $(YX)_A := \bigcup_{T \in X_A}Y_T$ and $(XY)_A := \bigcup_{T \in Y_A} X_T$. Notice that $(YX)_A = (XY)_A = \big\lbrace \big[ u_k^2, v_l^2, u_k^1, v_l^1\big] \bigm\vert k \neq i \text{ and } l \neq j \big\rbrace$ for all $A$ in $\mathcal{S}(\kappa)$, and therefore that $M_2 M_1 = M_1 M_2$.
	
	To show (iii), choose elements $B=\big[ \bar{u}_{k_0}^1, \bar{v}_j^2, \bar{u}_{k_0}^2, \bar{v}_j^1\big] \in X_A$, and $C=\big[  \bar{u}_i^2, \bar{v}_{l_0}^1, \bar{u}_i^1, \bar{v}_{l_0}^2\big] \in Y_A$. Then $D = \big[ u_{k_0}^2, v_{l_0}^2, u_{k_0}^1, v_{l_0}^1\big]$ is the unique pointed tile adjacent to both $B$ and $C$.
\end{proof}

We will see shortly that a tile system is actually an example of a so-called $k$\textit{-rank graph}, (specifically a $2$-rank graph) which were introduced in \cite{KumPas2000} to build on work by \cite{RobSte1999}.

\subsection*{Higher-rank graphs}

\begin{defn}\label{k-graph}
	Let $\Lambda$ be a category such that $\Ob(\Lambda)$ and $\Hom(\Lambda)$ are countable sets (that is, a \textit{countable small category}), and identify $\Ob(\Lambda)$ with the identity morphisms in $\Hom(\Lambda)$. For a morphism $\lambda \in \Hom_\Lambda(u,v)$, we define range and source maps $r(\lambda) = v$ and $s(\lambda) = u$ respectively.
	
	Let $d : \Lambda \rightarrow \mathbb{N}^k$ be a functor, called the \textbf{degree map}, and let $\lambda \in \Hom(\Lambda)$. We call the pair $(\Lambda, d)$ a $k$\textbf{-rank graph} (or simply a $k$\textbf{-graph}) if, whenever $d(\lambda) = \mathbf{m} + \mathbf{n}$ for some $\mathbf{m},\mathbf{n} \in \mathbb{N}^k$, we can find \textit{unique} elements $\mu, \nu \in \Hom(\Lambda)$ such that $\lambda = \nu \mu$, and $d(\mu)=\mathbf{m}$, $d(\nu) = \mathbf{n}$. Note that for $\mu$, $\nu$ to be composable, we must have $r(\mu) = s(\nu)$.
	
	For $\mathbf{n} \in \mathbb{N}^k$, we write $\Lambda^\mathbf{n} := d\inv (\mathbf{n})$; by the above property, we have that $\Lambda^\mathbf{0} = \Ob(\Lambda)$, and we call the elements of $\Lambda^\mathbf{0}$ the \textbf{vertices} of $(\Lambda, d)$. \cite{KumPas2000}
\end{defn}

We direct the reader to e.g. \cite{RaeSimYee2003} for further reference and standard examples of higher-rank graphs, in the event that the reader has not come across them.

%\begin{exs}\label{ex_higher_graph}\text{}
%	\begin{enumerate}
%		\item Let $E$ be a directed graph with vertex set $E^0$, edge set $E^1$, and maps $r_E,s_E : E^1 \rightarrow E^0$ describing the range and source of edges, respectively. For $n \geq 2$, define $E^n$ to be the set of paths of length $n$ on $E$, and write $E^* := \bigcup_{n \geq 0} E^n$. The maps $r_E,s_E$ extend to $E^*$ in a natural way. Then $E^*$ is a small category, and we have a functor $\ell :E^* \rightarrow \mathbb{N}$ which maps each path $\lambda \in E^*$ to its length.
%		
%		If $\ell(\lambda) = m + n$ for some $m,n \in \mathbb{N}$, then there are unique paths $\mu$, $\nu \in E^*$ such that $\lambda = \nu\cdot \mu$, and $\ell(\mu) = m$, $\ell(\nu) = n$.
%		
%		We must be wary that two finite paths $\mu$, $\nu$ in such a directed graph $E$ can be concatenated to give a path $\nu \cdot\mu$ if and only if $s_E(\mu) = r_E(\nu)$, so for the category $E^*$ we choose range and source maps $r(\lambda) = s_E(\lambda)$ and $s(\lambda) = r_E(\lambda)$ respectively. In this way, $(E^*,\ell)$ is a $1$-rank graph.
%		
%		\item Let $(\Lambda,d)$ be a $k$-rank graph. If $k = 0$, then $d$ is the trivial functor, and $\Lambda$ is a set.
%		
%		If $k = 1$, then we can draw a directed graph in the manner above, with vertex set $\Lambda^0$, edge set $\Lambda^1$, and range and source maps $r_E(\lambda) = s(\lambda)$ and $s_E(\lambda) = r(\lambda)$ respectively.
%		
%		If $k \geq 2$, then we can visualise $(\Lambda,d)$ as a collection of $k$ different coloured directed graphs with the same vertex set.
%	\end{enumerate}	
%\end{exs}

If $E$ is a directed graph on $n$ vertices, we can construct an $n \times n$ vertex matrix $M_E(i,j)$ with $ij$-th entry $1$ if there is an edge from $i$ to $j$, and $0$ otherwise.

If $E$, $F$ are directed graphs with the same vertex set, and such that their associated vertex matrices $M_E$, $M_F$ commute, then \cite{KumPas2000} showed that we can construct a $2$-rank graph out of $E$ and $F$. We use their method to prove:

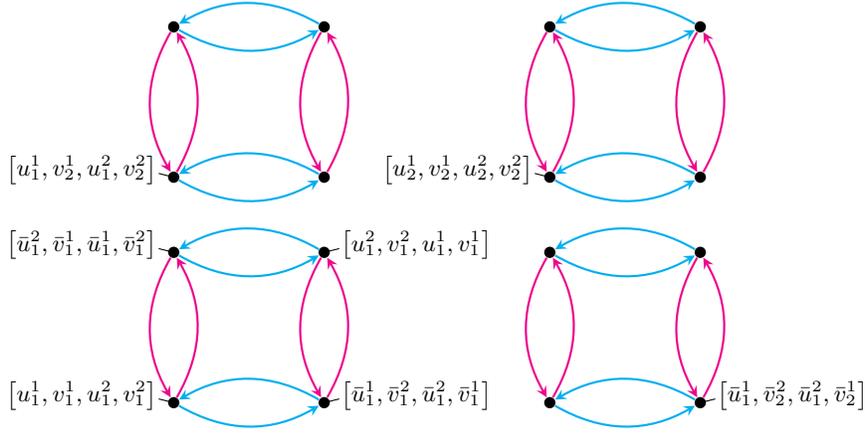
\begin{figure}[h]
	\begin{center}
		\begin{tikzpicture}
		\tikzset{
			big dot/.style={
				circle, inner sep=0pt, 
				minimum size=1.5mm, fill=black
			}
		}
		\node[big dot] (A) at (0,0) {};
		\node[big dot] (B) at (2,0) {};
		\node[big dot] (C) at (0,2) {};
		\node[big dot] (D) at (2,2) {};
		
		\draw[->, thick, >=stealth, cyan, bend right] (A) to (B);
		\draw[->, thick, >=stealth, cyan, bend right] (B) to (A);
		\draw[->, thick, >=stealth, cyan, bend right] (C) to (D);
		\draw[->, thick, >=stealth, cyan, bend right] (D) to (C);
		
		\draw[->, thick, >=stealth, magenta, bend right] (A) to (C);
		\draw[->, thick, >=stealth, magenta, bend right] (C) to (A);
		\draw[->, thick, >=stealth, magenta, bend right] (B) to (D);
		\draw[->, thick, >=stealth, magenta, bend right] (D) to (B);
		
		\draw[anchor=east] (-0.1,0.1) node{\small$\big[ u_1^1,v_1^1,u_1^2,v_1^2\big]$};
		\draw (0,0) -- (-0.2,0.05);
		\draw[anchor=east] (-0.1,2.1) node{\small$\big[ \bar{u}_1^2, \bar{v}_1^1, \bar{u}_1^1, \bar{v}_1^2\big]$};
		\draw (0,2) -- (-0.2,2.05);
		\draw[anchor=west] (2.1,2.1) node{\small$\big[ u_1^2,v_1^2,u_1^1,v_1^1\big]$};
		\draw (2,0) -- (2.2,0.05);
		\draw[anchor=west] (2.1,0.1) node{\small$\big[ \bar{u}_1^1, \bar{v}_1^2, \bar{u}_1^2, \bar{v}_1^1\big]$};
		\draw (2,2) -- (2.2,2.05);
		
		\begin{scope}[xshift=5cm]
		\node[big dot] (A) at (0,0) {};
		\node[big dot] (B) at (2,0) {};
		\node[big dot] (C) at (0,2) {};
		\node[big dot] (D) at (2,2) {};
		
		\draw[->, thick, >=stealth, cyan, bend right] (A) to (B);
		\draw[->, thick, >=stealth, cyan, bend right] (B) to (A);
		\draw[->, thick, >=stealth, cyan, bend right] (C) to (D);
		\draw[->, thick, >=stealth, cyan, bend right] (D) to (C);
		
		\draw[->, thick, >=stealth, magenta, bend right] (A) to (C);
		\draw[->, thick, >=stealth, magenta, bend right] (C) to (A);
		\draw[->, thick, >=stealth, magenta, bend right] (B) to (D);
		\draw[->, thick, >=stealth, magenta, bend right] (D) to (B);
		
		\draw[anchor=west] (2.1,0.1) node{\small$\big[ \bar{u}_1^1, \bar{v}_2^2, \bar{u}_1^2, \bar{v}_2^1\big]$};
		\draw (2,0) -- (2.2,0.05);
		\end{scope}
		
		\begin{scope}[yshift=3cm]
		\node[big dot] (A) at (0,0) {};
		\node[big dot] (B) at (2,0) {};
		\node[big dot] (C) at (0,2) {};
		\node[big dot] (D) at (2,2) {};
		
		\draw[->, thick, >=stealth, cyan, bend right] (A) to (B);
		\draw[->, thick, >=stealth, cyan, bend right] (B) to (A);
		\draw[->, thick, >=stealth, cyan, bend right] (C) to (D);
		\draw[->, thick, >=stealth, cyan, bend right] (D) to (C);
		
		\draw[->, thick, >=stealth, magenta, bend right] (A) to (C);
		\draw[->, thick, >=stealth, magenta, bend right] (C) to (A);
		\draw[->, thick, >=stealth, magenta, bend right] (B) to (D);
		\draw[->, thick, >=stealth, magenta, bend right] (D) to (B);
		
		\draw[anchor=east] (-0.1,0.1) node{\small$\big[ u_1^1,v_2^1,u_1^2,v_2^2\big]$};
		\draw (0,0) -- (-0.2,0.05);
		\end{scope}
		
		\begin{scope}[xshift=5cm,yshift=3cm]
		\node[big dot] (A) at (0,0) {};
		\node[big dot] (B) at (2,0) {};
		\node[big dot] (C) at (0,2) {};
		\node[big dot] (D) at (2,2) {};
		
		\draw[->, thick, >=stealth, cyan, bend right] (A) to (B);
		\draw[->, thick, >=stealth, cyan, bend right] (B) to (A);
		\draw[->, thick, >=stealth, cyan, bend right] (C) to (D);
		\draw[->, thick, >=stealth, cyan, bend right] (D) to (C);
		
		\draw[->, thick, >=stealth, magenta, bend right] (A) to (C);
		\draw[->, thick, >=stealth, magenta, bend right] (C) to (A);
		\draw[->, thick, >=stealth, magenta, bend right] (B) to (D);
		\draw[->, thick, >=stealth, magenta, bend right] (D) to (B);
		
		\draw[anchor=east] (-0.1,0.1) node{\small$\big[ u_2^1,v_2^1,u_2^2,v_2^2\big]$};
		\draw (0,0) -- (-0.2,0.05);
		\end{scope}
		\end{tikzpicture}
	\end{center}
	\caption{Visualisation of the tile system corresponding to the complete bipartite graph $\kappa(2,2)$. Each vertex is labelled with an element of $\mathcal{S}(\kappa)$; a few labels have been shown here. A blue (resp. magenta) arrow joins vertex $A$ to $B$ if and only if $M_1(A,B)=1$ (resp. $M_2(A,B)=1$). Notice the commuting squares, which give the tile system a $2$-rank graph structure: from any vertex $A$, follow a blue arrow, and then a magenta arrow to another vertex $D$, say. Then $\theta$ defines a unique magenta-blue path from $A$ to $D$. The $1$-skeleton of the $2$-rank graph $\Lambda(\kappa(\alpha,\beta))$ is strongly-connected only when $\alpha, \beta \geq 3$.}\label{fig_2-rank}
\end{figure}

\begin{prop}\label{tile_2rank}
	Let $\kappa = \kappa(\alpha,\beta)$ be the complete bipartite graph on $\alpha \geq 2$ white and $\beta \geq 2$ black vertices, and let $(\kappa, U, V, \mathcal{S}(\kappa))$ be a tile system with adjacency matrices $M_1$, $M_2$. This has a $2$-rank graph structure.
\end{prop}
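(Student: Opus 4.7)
The plan is to invoke the construction of \cite{KumPas2000} which produces a $k$-rank graph from a collection of directed graphs on a common vertex set whose adjacency matrices commute and which come equipped with a commuting-squares bijection; for $k = 2$ the input data reduces to two graphs and a single bijection $\theta$, and all the ingredients are already supplied by Proposition \ref{tile_complex_uce}.

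First I would form directed graphs $E_1$, $E_2$ on the common vertex set $\mathcal{S}(\kappa)$, placing an edge of colour $i$ from $A$ to $B$ exactly when $M_i(A,B) = 1$. The intended $2$-rank graph $\Lambda$ then has $\mathcal{S}(\kappa)$ as its vertices, the coloured edges as morphisms of degree $\mathbf{e}_i$ ($i = 1, 2$), and morphisms of degree $(m, n) \in \mathbb{N}^2$ as equivalence classes of paths with $m$ colour-$1$ edges and $n$ colour-$2$ edges, modulo the relations induced by $\theta$. The degree functor $d : \Lambda \to \mathbb{N}^2$ simply records the number of edges of each colour.

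The crucial step is the construction of $\theta$. For any composable pair $A \xrightarrow{1} B \xrightarrow{2} D$, the UCE Property (Proposition \ref{tile_complex_uce}(iii)) provides a well-defined map $Y_A \to Y_B$ sending $C' \mapsto D(C')$, the unique tile completing the corner $(A, B, C')$. By the commutation $M_1 M_2 = M_2 M_1$ from Proposition \ref{tile_complex_uce}(i), the fibres $\{B' : A \xrightarrow{1} B' \xrightarrow{2} D\}$ and $\{C' : A \xrightarrow{2} C' \xrightarrow{1} D\}$ have equal cardinality for every $(A, D)$; combined with the uniqueness built into UCE, this forces the above map to be a bijection and, passing to its inverse, yields a unique $C$ with $A \xrightarrow{2} C \xrightarrow{1} D$. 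I set $\theta(A \xrightarrow{1} B \xrightarrow{2} D) := (A \xrightarrow{2} C \xrightarrow{1} D)$.

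With $\theta$ in hand, the $k$-graph axioms reduce to unique factorisation of each morphism of degree $(m, n)$ into a colour-$1$ word followed by a colour-$2$ word (and vice versa), which follows by iterated application of $\theta$ to any path and induction on $m + n$. Because $k = 2$, no higher-dimensional coherence (such as the hexagon condition needed for $k \geq 3$) is required. I expect the main obstacle to be less conceptual than clerical: one must confirm that the bijection $\theta$ behaves correctly for each of the four orientation-types of pointed tile in \eqref{1}, and that repeated applications of $\theta$ rewrite any mixed word to its unique normal form independently of the order in which rewrites are performed — but the explicit calculations carried out for the representative $A = [u_i^1, v_j^1, u_i^2, v_j^2]$ in Proposition \ref{tile_complex_uce} strongly suggest this is routine bookkeeping.
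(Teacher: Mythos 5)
Your overall route is the same as the paper's: form two coloured graphs on the common vertex set $\mathcal{S}(\kappa)$ and feed them, together with a commuting-squares bijection $\theta$, into the construction of \cite[\S 6]{KumPas2000}. The gap is in your justification of $\theta$. What you need is: given $M_1(A,B)=1$ and $M_2(B,D)=1$, there is exactly one $C$ with $M_2(A,C)=1$ and $M_1(C,D)=1$. You try to get this by combining (a) the UCE map $Y_A \to Y_B$, $C' \mapsto D(C')$, with (b) the fact that $M_1M_2=M_2M_1$ forces, for each pair $(A,D)$, the number of intermediates $B'$ with $M_1(A,B')=M_2(B',D)=1$ to equal the number of intermediates $C'$ with $M_2(A,C')=M_1(C',D)=1$. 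But (b) is a count over \emph{all} intermediates and does not control the map in (a) for a \emph{fixed} $B$: the uniqueness in the UCE Property is uniqueness of $D$ given the corner $(A,B,C')$, not uniqueness of $C'$ given $(A,B,D)$, and the two are not interchangeable. Indeed one can write down $0$--$1$ matrices which satisfy UCE and commute, yet in which a path $A\to B\to D$ (colour $1$ then colour $2$) admits two reversed factorisations through distinct tiles $C_1\neq C_2$; the excess is absorbed by a second intermediate $B'\neq B$, so the counts in (b) still balance. Hence ``equal fibre cardinalities plus UCE forces the map to be a bijection'' is not a valid deduction, and with it both the well-definedness and the uniqueness of $\theta$ are left unproven.

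The gap is easy to repair from data you already have but did not use: the \emph{symmetry} of $M_1$ and $M_2$ (Proposition \ref{tile_complex_uce}(i)). Given $M_1(A,B)=1$ and $M_2(B,D)=1$, symmetry gives $M_1(B,A)=1$, so $(B;A,D)$ is itself a corner of the type covered by Definition \ref{uce}; applying the UCE Property at $B$ yields a unique tile $C$ with $M_2(A,C)=1$ and $M_1(D,C)=1$, i.e.\ (symmetry again) $M_1(C,D)=1$. This gives existence and uniqueness of the reversed factorisation, hence the bijection $\theta$, in one line. Alternatively, one can read off from the explicit description of $X_A$ and $Y_A$ in the proof of Proposition \ref{tile_complex_uce} that every entry of $M_1M_2$ is $0$ or $1$, so each pair $(A,D)$ supports at most one path of each colour order; this is in effect how the paper obtains its unique $\theta$. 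With $\theta$ secured, the rest of your outline (paths modulo the $\theta$-relations, degree functor counting colours, unique factorisation by induction on degree, no higher coherence needed when $k=2$) is sound and corresponds to the paper's assembly of the same data into the grid description of Kumjian--Pask.
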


\begin{proof}
	Following the method of Theorem \ref{vdovina}, label the elements of the sets $U$, $V$ such that 
	\begin{align*}
	U &= \big\lbrace u_1^1,u_1^2,\ldots , u_\alpha^1,u_\alpha^2, \bar{u}_1^1,\bar{u}_1^2,\ldots , \bar{u}_\alpha^1,\bar{u}_\alpha^2 \big\rbrace, \\
	\quad V &= \big\lbrace v_1^1,v_1^2,\ldots , v_\beta^1,v_\beta^2, \bar{v}_1^1,\bar{v}_1^2, \ldots , \bar{v}_\beta^1,\bar{v}_\beta^2 \big\rbrace,
	\end{align*}
	where $u_1,\ldots , u_\alpha$ and $v_1,\ldots ,v_\beta$ are the white and black vertices of $\kappa$, respectively. Construct the tile complex $TC(\kappa)$, and consider the set $\mathcal{S}(\kappa) \subseteq U \times V \times U \times V$ of pointed tiles of $TC(\kappa)$. Since $\kappa$ is complete, there is for each $u_i$ and $v_j$ an edge joining them, hence:
\begin{multline*}
\mathcal{S}(\kappa) = \big\lbrace \big[ u_i^1,v_j^1,u_i^2,v_j^2 \big], \big[ \bar{u}_i^1, \bar{v}_j^2, \bar{u}_i^2, \bar{v}_j^1 \big], \\
\big[ u_i^2,v_j^2,u_i^1,v_j^1 \big], \big[ \bar{u}_i^2, \bar{v}_j^1, \bar{u}_i^1, \bar{v}_j^2 \big] \bigm\vert 1 \leq i \leq \alpha, 1\leq j \leq \beta \big\rbrace .
\end{multline*}
Consider the corresponding adjacency matrices $M_1$ and $M_2$ as described in Definition \ref{adjacency_matrices}, and note that they commute, by Proposition \ref{tile_complex_uce}. We can draw directed graphs $E$, $F$ with the same vertex set $E^0 = F^0 = \mathcal{S}(\kappa)$, and a directed edge joining vertex $A$ to $B$ if and only if $M_1(A,B) = 1$, $M_2(A,B) = 1$, respectively (Figure \ref{fig_2-rank}). Write $r_E$, $s_E$ (resp. $r_F$, $s_F$) for the maps describing the respective range and source of edges in $E^1$ (resp. $F^1$). 

Define the following sets: $E^1 \ast F^1 := \big\lbrace (\lambda,\mu) \in E^1 \times F^1 \mid r_E(\lambda) = s_F(\mu) \big\rbrace$ and $F^1 \ast E^1 := \big\lbrace (\mu,\lambda) \in F^1 \times E^1 \mid r_F(\mu) = s_E(\lambda) \big\rbrace$. By the fact that $M_1$, $M_2$ commute, there is a unique bijection $\theta : E^1 \ast F^1 \rightarrow F^1 \ast E^1$ mapping $(\lambda,\mu) \mapsto (\mu',\lambda')$ such that $s_E(\lambda) = s_F(\mu')$ and $r_F(\mu) = r_E(\lambda')$.

We construct a $2$-rank graph $(\Lambda,d)$ as follows: let $\Lambda^0 = \mathcal{S}(\kappa)$, and for $(m,n) \in \mathbb{N}^2$, write $W(m,n) := \big\lbrace (p,q) \in \mathbb{N}^2 \mid p \leq m, q \leq n \big\rbrace$. Then an element of $\Lambda^{(m,n)}$ is given by a triple $(A,\lambda,\mu) = ((A(p,q))_{p,q},(\lambda(p,q))_{p,q},(\mu(p,q))_{p,q})$ such that:
\begin{enumerate}[label=(\alph*)]
	\item  $A(p,q) \in \mathcal{S}(\kappa)$ for some $(p,q) \in W(m,n)$, 
	\item $\lambda(p,q) \in E^1$ for some $(p,q) \in W(m-1,n)$, \item $\mu(p,q) \in F^1$ for some $(p,q) \in W(m,n-1)$, 
	\item $s_E(\lambda(p,q)) = s_F(\mu(p,q)) = A(p,q)$,
	\item $r_E(\lambda(p,q)) = A(p+1,q)$ and $r_F(\mu(p,q)) = A(p,q+1)$,
	\item $\theta(\lambda(p,q),\mu(p+1,q)) = (\mu(p,q),\lambda(p,q+1))$,
\end{enumerate}
whenever these conditions make sense. We write $\Lambda := \bigcup_{m,n \geq 0} \Lambda^{(m,n)}$, and define range and source maps $r(A,\lambda,\mu) := A(0,0)$, $s(A,\lambda,\mu):= A(m,n)$ respectively. We must be wary that two finite paths $\mu$, $\nu$ in such a directed graph $E$ can be concatenated to give a path $\nu \cdot\mu$ if and only if $s_E(\mu) = r_E(\nu)$, so we ``change the direction'' of the source and range of the arrows here.

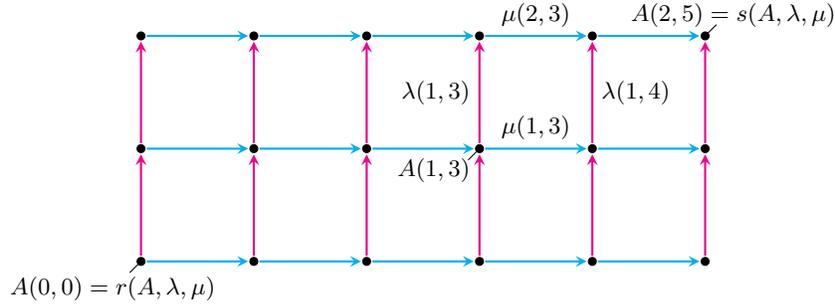
\begin{figure}[h]
	\begin{center}
		\begin{tikzpicture}
		\begin{scope}[scale=0.75]
		\filldraw (0,0) circle (2pt);
		\filldraw[shift={(2,0)}] (0,0) circle (2pt);
		\filldraw[shift={(4,0)}] (0,0) circle (2pt);
		\filldraw[shift={(6,0)}] (0,0) circle (2pt);
		\filldraw[shift={(8,0)}] (0,0) circle (2pt);
		\filldraw[shift={(10,0)}] (0,0) circle (2pt);
		
		\filldraw[shift={(0,2)}] (0,0) circle (2pt);
		\filldraw[shift={(2,2)}] (0,0) circle (2pt);
		\filldraw[shift={(4,2)}] (0,0) circle (2pt);
		\filldraw[shift={(6,2)}] (0,0) circle (2pt);
		\filldraw[shift={(8,2)}] (0,0) circle (2pt);
		\filldraw[shift={(10,2)}] (0,0) circle (2pt);
		
		\filldraw[shift={(0,4)}] (0,0) circle (2pt);
		\filldraw[shift={(2,4)}] (0,0) circle (2pt);
		\filldraw[shift={(4,4)}] (0,0) circle (2pt);
		\filldraw[shift={(6,4)}] (0,0) circle (2pt);
		\filldraw[shift={(8,4)}] (0,0) circle (2pt);
		\filldraw[shift={(10,4)}] (0,0) circle (2pt);
		
		\draw[<-,thick,>=stealth,cyan] (1.9,0) -- (0.1,0);
		\draw[<-,thick,>=stealth,cyan] (3.9,0) -- (2.1,0);
		\draw[<-,thick,>=stealth,cyan] (5.9,0) -- (4.1,0);
		\draw[<-,thick,>=stealth,cyan] (7.9,0) -- (6.1,0);
		\draw[<-,thick,>=stealth,cyan] (9.9,0) -- (8.1,0);
		
		\draw[<-,thick,>=stealth,magenta] (0,1.9) -- (0,0.1);
		\draw[<-,thick,>=stealth,magenta] (0,3.9) -- (0,2.1);
		
		\begin{scope}[xshift=2cm]
		\draw[<-,thick,>=stealth,magenta] (0,1.9) -- (0,0.1);
		\draw[<-,thick,>=stealth,magenta] (0,3.9) -- (0,2.1);
		\end{scope}
		\begin{scope}[xshift=4cm]
		\draw[<-,thick,>=stealth,magenta] (0,1.9) -- (0,0.1);
		\draw[<-,thick,>=stealth,magenta] (0,3.9) -- (0,2.1);
		\end{scope}
		\begin{scope}[xshift=6cm]
		\draw[<-,thick,>=stealth,magenta] (0,1.9) -- (0,0.1);
		\draw[<-,thick,>=stealth,magenta] (0,3.9) -- (0,2.1);
		\end{scope}
		\begin{scope}[xshift=8cm]
		\draw[<-,thick,>=stealth,magenta] (0,1.9) -- (0,0.1);
		\draw[<-,thick,>=stealth,magenta] (0,3.9) -- (0,2.1);
		\end{scope}
		\begin{scope}[xshift=10cm]
		\draw[<-,thick,>=stealth,magenta] (0,1.9) -- (0,0.1);
		\draw[<-,thick,>=stealth,magenta] (0,3.9) -- (0,2.1);
		\end{scope}
		
		\begin{scope}[yshift=2cm]
		\draw[<-,thick,>=stealth,cyan] (1.9,0) -- (0.1,0);
		\draw[<-,thick,>=stealth,cyan] (3.9,0) -- (2.1,0);
		\draw[<-,thick,>=stealth,cyan] (5.9,0) -- (4.1,0);
		\draw[<-,thick,>=stealth,cyan] (7.9,0) -- (6.1,0);
		\draw[<-,thick,>=stealth,cyan] (9.9,0) -- (8.1,0);
		\end{scope}
		\begin{scope}[yshift=4cm]
		\draw[<-,thick,>=stealth,cyan] (1.9,0) -- (0.1,0);
		\draw[<-,thick,>=stealth,cyan] (3.9,0) -- (2.1,0);
		\draw[<-,thick,>=stealth,cyan] (5.9,0) -- (4.1,0);
		\draw[<-,thick,>=stealth,cyan] (7.9,0) -- (6.1,0);
		\draw[<-,thick,>=stealth,cyan] (9.9,0) -- (8.1,0);
		\end{scope}		
		
		\draw[anchor=south] (7,2) node{\small$\mu(1,3)$};
		\draw[anchor=east] (6,3) node{\small$\lambda(1,3)$};
		\draw[anchor=south] (7,4) node{\small$\mu(2,3)$};
		\draw[anchor=west] (8,3) node{\small$\lambda(1,4)$};
		
		\draw[anchor=north east] (6,2) node{\small$A(1,3)$};
		\draw (6,2) -- (5.8,1.8);
		
		\draw[anchor=north] (-0.5,-0.1) node{\small$A(0,0) = r(A,\lambda,\mu)$};
		\draw (0,0) -- (-0.2,-0.2);
		
		\draw[anchor=south] (10.5,4) node{\small$A(2,5) = s(A,\lambda,\mu)$};
		\draw (10,4) -- (10.2,4.2);
		
		\end{scope}
		\end{tikzpicture}
	\end{center}
	\caption{An element $(A,\lambda,\mu)$ of $\Lambda^{(m,n)}$ can be represented as an $m \times n$ grid. The isomorphism $\theta$ defines commuting squares. Here is an element of $\Lambda^{(2,5)}$.}\label{fig_2-rank_squares}
\end{figure} 

If $\varphi$, $\psi$ are paths of nonzero length $m$, $n$ in $E$, $F$ respectively, with $r_E(\varphi) = s_F(\psi)$, then there is a unique element $\varphi \psi = (A,\lambda,\mu) \in \Lambda^{(m,n)}$ such that $\varphi = \lambda(0,0) \cdots \lambda(m-1,0)$, and $\psi = \mu(m,0) \cdots \mu(m,n-1)$. If instead (or as well) $r_F(\psi) = s_E(\varphi)$, then there is a unique element $\psi \varphi$ such that $\varphi = \lambda(0,n) \cdots \lambda(m-1,n)$ and $\psi = \mu(0,0) \cdots \mu(0,n-1)$ (Figure \ref{fig_2-rank_squares}).

Then, given two elements $( A_1,\lambda_1,\mu_1) \in \Lambda^{( m_1,n_1)}$ and $( A_2,\lambda_2, \mu_2) \in \Lambda^{( m_2,n_2)}$ such that $A_1 ( m_1,n_1) = A_2(0,0)$, we can find a unique element $( A_1,\lambda_1,\mu_1)( A_2,\lambda_2,\mu_2) = ( A_3,\lambda_3,\mu_3 )$ in $\Lambda^{( m_1+m_2,n_1+n_2)}$ such that:
\begin{enumerate}[label=(\alph*)]
	\item $A_3(p,q) = A_1(p,q)$, and $A_3(m+p,n+q) = A_2(p,q)$,
	\item $\lambda_3(p,q) = \lambda_1(p,q)$, and $\lambda_3(m+p,n+q) = \lambda_2(p,q)$,
	\item $\mu_3(p,q) = \mu_1(p,q)$, and $\mu_3(m+p,n+q) = \mu_2(p,q)$,
\end{enumerate}
whenever these conditions make sense. In this way, composition is defined in $\Lambda$, and by construction we have associativity and the factorisation property of Definition \ref{k-graph}. Thus $\Lambda$, together with obvious degree functor $d : (A,\lambda,\mu) \mapsto (m,n)$ for $(A, \lambda, \mu) \in \Lambda^{(m,n)}$, has the structure of a $2$-rank graph, and we write $(\Lambda,d) = \Lambda(\kappa)$.
\end{proof}

\begin{defn}
	Let $(\Lambda,d)$ be a $k$-rank graph, let $\mathbf{n} \in \mathbb{N}^k$, and let $v \in \Lambda^\mathbf{0}$. Write $\Lambda^\mathbf{n}(v)$ for the set of morphisms in $\Lambda^\mathbf{n}$ which map onto the vertex $v$, that is, $\Lambda^\mathbf{n}(v) := \lbrace \lambda \in \Lambda^\mathbf{n} \mid r(\lambda) = v\rbrace$. We say that $(\Lambda,d)$ is \textbf{row-finite} if each set $\Lambda^\mathbf{n}(v)$ is finite, and that $(\Lambda,d)$ has \textbf{no sources} if each $\Lambda^\mathbf{n}(v)$ is non-empty.
\end{defn}

As an extension of the concept of a \textit{graph algebra} (c.f. \cite{Rae2005}), we can associate a $\cst$-algebra to a $k$-rank graph as follows:

\begin{defn}\label{graph_algebra}
	Let $\Lambda = (\Lambda,d)$ be a row-finite $k$-rank graph with no sources. We define $\cst(\Lambda)$ to be the universal $\cst$-algebra generated by a family $\lbrace s_\lambda \mid \lambda \in \Lambda\rbrace$ of \textit{partial isometries} which have the following properties:
	\begin{enumerate}[label=(\alph*)]
		\item The set $\big\lbrace s_v \mid v \in \Lambda^\mathbf{0}\big\rbrace$ satisfies $(s_v)^2 = s_v = s_v^*$ and $s_u s_v = 0$ for all $u \neq v$.
		\item If $r(\lambda) = s(\mu)$ for some $\lambda, \mu \in \Lambda$, then $s_{\mu\lambda} = s_\mu s_\lambda$.
		\item For all $\lambda \in \Lambda$, we have $s_\lambda^* s_\lambda = s_{s(\lambda)}$.
		\item For all vertices $v \in \Lambda^\mathbf{0}$ and $\mathbf{n} \in \mathbb{N}^k$, we have:
		\[
		s_v = \sum_{\lambda \in \Lambda^\mathbf{n}(v)} s_\lambda s_\lambda^*.
		\]
	\end{enumerate}
	Note that without the row-finiteness condition, property (d) is not well-defined.
\end{defn}

\begin{thm}[Evans, 2008]\label{evans}
	Let $\Lambda$ be a row-finite $2$-graph with no sources, finite vertex set $\Lambda^\mathbf{0}$ with $\big| \Lambda^\mathbf{0}\big| = n$, and vertex matrices $M_E$, $M_F$. Then:
	\begin{align*}
	K_0 (\cst (\Lambda)) &\cong \mathbb{Z}^{r_0} \oplus \tors\big(\coker\big(\mathbf{1}-M_E^T, \mathbf{1}-M_F^T\big)\big), \\
	K_1(\cst (\Lambda)) &\cong \mathbb{Z}^{r_1} \oplus \tors\big(\coker\big(\mathbf{1}-M_E, \mathbf{1}-M_F\big)\big),
	\end{align*}
	where
	\begin{align*}
	r_0 &:= \rk\big(\coker\big(\mathbf{1}-M_E^T, \mathbf{1}-M_F^T\big)\big) + \rk\big(\coker \big(\mathbf{1}-M_E,\mathbf{1}-M_F\big) \big), \\
	r_1 &:= \rk\big(\coker\big(\mathbf{1}-M_E^T, \mathbf{1}-M_F^T\big)\big) + \rk\big(\coker\big(\mathbf{1}-M_E,\mathbf{1}-M_F\big) \big),
	\end{align*}
	and where $\mathbf{1}$ is the $n \times n$ identity matrix, $({}\ast {} ,{} \ast {})$ denotes the corresponding block $n \times 2n$ matrix, $\rk(\mathfrak{G})$ denotes the torsion-free rank of a finitely-generated Abelian group $\mathfrak{G}$, and $\tors(\mathfrak{G})$ denotes the torsion part of $\mathfrak{G}$. \cite[Proposition 4.4]{Eva2008}
\end{thm}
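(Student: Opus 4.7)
Since Theorem \ref{evans} is attributed to \cite[Proposition 4.4]{Eva2008}, the plan is to reconstruct the proof along the standard route for the K-theory of higher-rank graph algebras. The strategy is to exploit the natural gauge action of $\mathbb{T}^2$ on $\cst(\Lambda)$: the fixed-point subalgebra $C := \cst(\Lambda)^{\mathbb{T}^2}$ is AF, and $\cst(\Lambda)$ is recovered as an iterated crossed product $(C \rtimes_\beta \mathbb{Z}) \rtimes_\alpha \mathbb{Z}$, where the commutation of $M_E$ and $M_F$ ensures that the two integer actions commute at the level of $K$-theory. One then applies the Pimsner--Voiculescu six-term exact sequence twice.

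Concretely, $K_0(C)$ is a direct limit of copies of $\mathbb{Z}^n$ along maps built from the product $M_E M_F = M_F M_E$, and $K_1(C) = 0$ since $C$ is AF. The inner $\mathbb{Z}$-action contributes $M_F^T$ on $K_0$, so the first PV sequence (in the direct limit) yields
\[
K_0(C \rtimes_\beta \mathbb{Z}) \cong \coker(1 - M_F^T), \qquad K_1(C \rtimes_\beta \mathbb{Z}) \cong \ker(1 - M_F^T).
\]
Feeding this into the second PV sequence, driven by the $\mathbb{Z}$-action inducing $1 - M_E^T$ on $K_*(C \rtimes_\beta \mathbb{Z})$, produces short exact sequences whose cokernel terms amount precisely to $\coker(1 - M_E^T, 1 - M_F^T)$, and whose kernel terms take a dual form (giving $\coker(1 - M_E, 1 - M_F)$ when computing $K_1$ via the transpose conventions).

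The main obstacle is converting the short exact sequences produced by the PV machinery,
\[
0 \longrightarrow \coker \longrightarrow K_i(\cst(\Lambda)) \longrightarrow \ker \longrightarrow 0,
\]
into genuine direct-sum decompositions. This splitting is available because the kernel term is a subgroup of a finitely generated free abelian group and so is torsion-free (hence projective as a $\mathbb{Z}$-module). Once splitting is in hand, the torsion part of $K_i$ is read off as $\tors(\coker(1-M_E^T, 1-M_F^T))$ (respectively the untransposed version for $K_1$), and a rank count across the two PV sequences, tracking that each $\coker$ and $\ker$ contributes to both sides, yields the stated formulas for $r_0$ and $r_1$.
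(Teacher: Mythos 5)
The paper offers no proof of Theorem \ref{evans}: it is quoted directly from \cite[Proposition 4.4]{Eva2008}, so there is no internal argument to compare yours against. Judged on its own terms, your sketch does follow the same circle of ideas as Evans' actual proof (gauge action of $\mathbb{T}^2$, AF structure, crossed products by $\mathbb{Z}^2$, and a Pimsner--Voiculescu/spectral-sequence computation), but the steps that carry the real content are asserted rather than proved. First, $\cst(\Lambda)$ is \emph{not} the iterated crossed product $(C\rtimes\mathbb{Z})\rtimes\mathbb{Z}$ of its fixed-point algebra $C$; the correct statement goes through Takai duality, $\big(\cst(\Lambda)\rtimes_\gamma\mathbb{T}^2\big)\rtimes\mathbb{Z}^2\cong \cst(\Lambda)\otimes\mathcal{K}$, with the AF algebra being the crossed product by $\mathbb{T}^2$ (Morita equivalent to the core). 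This is repairable, but as written the starting point is false.

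More seriously, the identifications you need at the end are exactly the substantive part of Evans' computation, and your sketch skips them. The claim $K_0(C\rtimes_\beta\mathbb{Z})\cong\coker(1-M_F^T)$ suppresses the direct-limit telescoping (the group is a cokernel on $\varinjlim(\mathbb{Z}^n,M_E^T)$, not on $\mathbb{Z}^n$), and the assertion that the second PV sequence ``amounts precisely to'' $\coker(\mathbf{1}-M_E^T,\mathbf{1}-M_F^T)$, with the \emph{untransposed} matrices appearing in the $K_1$-torsion, is not a formal consequence of PV: it requires reducing to the finite complex $\mathbb{Z}^n\to\mathbb{Z}^{2n}\to\mathbb{Z}^n$ built from $\mathbf{1}-M_E^T,\mathbf{1}-M_F^T$ and then a duality/Smith-normal-form argument identifying the torsion of the middle homology with $\tors\big(\coker(\mathbf{1}-M_E,\mathbf{1}-M_F)\big)$; none of this appears. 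Relatedly, your splitting argument (``the kernel term is a subgroup of a finitely generated free abelian group'') is only valid after that reduction: in your direct-limit setup the groups $K_0(C)$ and $K_1(C\rtimes\mathbb{Z})$ are limits of copies of $\mathbb{Z}^n$, which are torsion-free but need not be free, so the quoted justification does not apply as stated. (Torsion-freeness of the quotient does suffice to locate the torsion, and finite generation of the final groups handles the rank count, but carrying out that bookkeeping is precisely the missing part of the proof.)
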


\begin{corl}\label{evans_corl}
	Let $\kappa = \kappa(\alpha,\beta)$ be the complete bipartite graph on $\alpha \geq 2$ white and $\beta \geq 2$ black vertices, and let $(\kappa,U,V,\mathcal{S}(\kappa))$ be a tile system with adjacency matrices $M_1$, $M_2$ as in Definition \ref{adjacency_matrices}. As an abuse of notation, we write $\cst(\kappa) = \cst(\Lambda(\kappa))$. Then
	\[
	K_0 (\cst (\kappa)) = K_1 (\cst (\kappa)) = \coker\big(\mathbf{1}-M_1^T, \mathbf{1}-M_2^T\big) \oplus \rk \big(\coker\big(\mathbf{1}-M_1^T, \mathbf{1}-M_2^T\big)\big).
	\]
\end{corl}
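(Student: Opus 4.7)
The plan is to obtain the corollary as a direct specialisation of Theorem \ref{evans} applied to the $2$-rank graph $\Lambda(\kappa)$ constructed in Proposition \ref{tile_2rank}, exploiting the symmetry of $M_1$ and $M_2$ established in Proposition \ref{tile_complex_uce}(i).

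First I would check that the hypotheses of Theorem \ref{evans} are met. The vertex set $\Lambda^\mathbf{0} = \mathcal{S}(\kappa)$ is finite (of cardinality $4\alpha\beta$), so $\Lambda(\kappa)$ is automatically row-finite. The no-sources condition requires each set $\Lambda^{\mathbf{n}}(v)$ to be non-empty; for $\mathbf{n} = \mathbf{e}_1, \mathbf{e}_2$ this is exactly the statement of Proposition \ref{tile_complex_uce}(ii) (each column of $M_i$ carrying a non-zero entry), and for general $\mathbf{n} \in \mathbb{N}^2$ one extends inductively, using the UCE property to fill out any partial grid into a full element of $\Lambda^{\mathbf{n}}(v)$.

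With the hypotheses verified, I would invoke Theorem \ref{evans} with $(M_E, M_F) = (M_1, M_2)$ to obtain
\[
K_0(\cst(\kappa)) \cong \mathbb{Z}^{r_0} \oplus \tors\bigl(\coker(\mathbf{1}-M_1^T, \mathbf{1}-M_2^T)\bigr), \qquad K_1(\cst(\kappa)) \cong \mathbb{Z}^{r_1} \oplus \tors\bigl(\coker(\mathbf{1}-M_1, \mathbf{1}-M_2)\bigr).
\]
Now I would apply the crucial observation: by Proposition \ref{tile_complex_uce}(i), $M_1^T = M_1$ and $M_2^T = M_2$, so the two cokernels appearing above coincide. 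Writing $C := \coker(\mathbf{1}-M_1^T, \mathbf{1}-M_2^T)$, this symmetry also forces $r_0 = r_1 = 2\rk(C)$.

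Finally, using the structure theorem for finitely-generated abelian groups to write $C \cong \mathbb{Z}^{\rk(C)} \oplus \tors(C)$, I would rearrange
\[
\mathbb{Z}^{2\rk(C)} \oplus \tors(C) \;\cong\; \mathbb{Z}^{\rk(C)} \oplus \bigl(\mathbb{Z}^{\rk(C)} \oplus \tors(C)\bigr) \;\cong\; \mathbb{Z}^{\rk(C)} \oplus C,
\]
yielding the stated common value for $K_0(\cst(\kappa))$ and $K_1(\cst(\kappa))$. There is no genuine obstacle here: the entire argument is a bookkeeping exercise once symmetry of the adjacency matrices is in hand, and the real content of the corollary — computing the rank and torsion of $C$ for arbitrary $\alpha, \beta$ — is deferred to the later explicit calculation in Theorem \ref{pointed}.
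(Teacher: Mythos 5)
Your proposal is correct and follows essentially the same route as the paper: verify that $\Lambda(\kappa)$ is row-finite with finite vertex set and no sources (via Proposition \ref{tile_complex_uce}) and then apply Theorem \ref{evans}. The only difference is that you spell out the symmetry step $M_i^T = M_i$ and the rearrangement $\mathbb{Z}^{2\rk(C)} \oplus \tors(C) \cong \mathbb{Z}^{\rk(C)} \oplus C$, which the paper leaves implicit; this is a welcome clarification rather than a different argument.
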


\begin{proof}
	Firstly, $\alpha, \beta < \infty$ by assumption, and by the UCE Property of the tile system (Proposition \ref{tile_complex_uce}) we know that each row and column of $M_1$ and $M_2$ has at least one nonzero element. Hence $\Lambda(\kappa)$ is row-finite, has no sources, and is such that $\big|\Lambda(\kappa)^\mathbf{0}\big| = 4\alpha\beta$, whence the result follows from Theorem \ref{evans}.
\end{proof}

\begin{thm}[K-groups for pointed tile systems]\label{pointed}
	Let $a,b \geq 0$, and let $\kappa (a+2,b+2)$ be the complete bipartite graph on $a+2$ white and $b+2$ black vertices. Without loss of generality, we assume that $a\leq b$. Write $l := \lcm ( a, b )$, and $g := \gcd ( a, b )$. Then, for $\epsilon = 0,1$:
	\begin{enumerate}[label=(\roman*)]
		\item If $a=b=0$, then $K_\epsilon (\cst (\kappa (a+2,b+2)) = K_\epsilon (\cst (\kappa (2,2))) \cong \mathbb{Z}^8$.
		
		\item If $a=0,1$ and $b \geq 1$, then
		\[
		K_\epsilon (\cst (\kappa (a+2,b+2))) \cong (\mathbb{Z}/b)^2 \oplus \mathbb{Z}^{4(b+1)}.
		\]
		
		\item If $a,b \geq 2$ and $a,b$ are coprime, then
		\[
		K_\epsilon (\cst (\kappa (a+2,b+2))) \cong (\mathbb{Z}/a)^{b-a} \oplus ( \mathbb{Z}/ab )^{a+1} \oplus \mathbb{Z}^{2(a+1)(b+1)}.
		\]
		
		\item If $a,b \geq 2$ and $a,b$ are \emph{not} coprime, then
		\[
		K_\epsilon(\cst (\kappa(a+2,b+2))) \cong (\mathbb{Z}/a)^{b-a} \oplus ( \mathbb{Z}/l )^{a+1} \oplus ( \mathbb{Z}/g)^{a+2} \oplus \mathbb{Z}^{2(a+1)(b+1)},
		\]
		where $(\mathbb{Z}/a)^0$ is defined to be the trivial group in the case that $a=b$.
	\end{enumerate}
\end{thm}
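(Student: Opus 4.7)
My plan is to apply Corollary \ref{evans_corl}: since $M_1$ and $M_2$ are symmetric, the task of computing $K_\epsilon(\cst(\kappa(a+2,b+2)))$ reduces to finding the cokernel $C := \coker\bigl(\mathbf{1} - M_1,\, \mathbf{1} - M_2\bigr)$, after which $K_\epsilon \cong C \oplus \mathbb{Z}^{\rk C}$. I label the $4\alpha\beta$ pointed tiles as $T^t_{ij}$, where $t \in \{1,2,3,4\}$ runs over the four representatives in (\ref{1}) and $(i,j)$ indexes the edge $u_iv_j$ of $\kappa$. Extending the calculation of $X_A$, $Y_A$ in the proof of Proposition \ref{tile_complex_uce} to all four basepoint choices, I expect that $M_1$ pairs the type classes $\{1,2\}$ and $\{3,4\}$ across differing $i$-coordinates, while $M_2$ pairs $\{1,4\}$ and $\{2,3\}$ across differing $j$-coordinates; in each case the source tile at $(i,j)$ is adjacent to tiles of the paired type with a free choice of the changed coordinate among $\alpha - 1 = a + 1$ (or $\beta - 1 = b + 1$) options.

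Denote the four generators of $C$ arising from $T^1_{ij}, T^2_{ij}, T^3_{ij}, T^4_{ij}$ by $a_{ij}, b_{ij}, c_{ij}, d_{ij}$. The heart of the argument is to use three of the eight defining relations (one $M_1$-relation and two $M_2$-relations, say) to eliminate $b$, $c$, $d$ in favour of the $a_{ij}$ and their partial sums. I anticipate
\[
b_{ij} = A_j - a_{ij}, \qquad d_{ij} = \widetilde{A}_i - a_{ij}, \qquad c_{ij} = T - A_j - \widetilde{A}_i + a_{ij},
\]
where $A_j = \sum_i a_{ij}$, $\widetilde{A}_i = \sum_j a_{ij}$, and $T = \sum_{i,j} a_{ij}$. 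I then need to verify that the remaining five relations collapse precisely to the two families $a A_j = 0$ and $b \widetilde{A}_i = 0$, from which $g T = 0$ (with $g = \gcd(a,b)$) follows automatically by summation and B\'ezout.

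With $C$ so presented, I plan to split $\mathbb{Z}^{\alpha\beta}$, viewed as integer $\alpha \times \beta$ matrices, as $N \oplus S$, where $N$ is the rank-$(\alpha-1)(\beta-1)$ sublattice of matrices with all row and column sums zero (which the relations leave untouched), while $S$ parametrises row- and column-sum data subject to the consistency $\sum_i \widetilde{A}_i = \sum_j A_j = T$. A $\mathbb{Z}$-basis of $S$ is $(T,\, A_1, \ldots, A_{\beta-1},\, \widetilde{A}_1, \ldots, \widetilde{A}_{\alpha-1})$, on which the relations act diagonally and yield
\[
C \cong \mathbb{Z}^{(\alpha-1)(\beta-1)} \oplus (\mathbb{Z}/a)^{\beta-1} \oplus (\mathbb{Z}/b)^{\alpha-1} \oplus \mathbb{Z}/g.
\]
Substituting $\alpha = a+2$, $\beta = b+2$, doubling the free rank per Corollary \ref{evans_corl}, and applying the invariant-factor rewriting $\mathbb{Z}/a \oplus \mathbb{Z}/b \cong \mathbb{Z}/g \oplus \mathbb{Z}/l$ (with $l = \lcm(a,b)$) to regroup torsion summands, should deliver the four stated cases --- bearing in mind that $\mathbb{Z}/0 = \mathbb{Z}$ is absorbed into the free part when $a = 0$, and $\mathbb{Z}/1 = 0$ vanishes when $a = 1$.

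The main obstacle I expect is the verification in the second paragraph that all eight adjacency relations genuinely collapse to just the two families $a A_j = 0$ and $b \widetilde{A}_i = 0$ once $b, c, d$ are eliminated: one must check that the $M_1$-relations on the $(c,d)$-pair and the $M_2$-relations on the $(b,c)$-pair become tautological after substitution, rather than contributing extra torsion. Once this reduction is secured, the $N \oplus S$ decomposition and the final casework are essentially bookkeeping, using only that $\gcd(0, b) = b$ and the standard Smith normal form of $\begin{pmatrix} a & 0 \\ 0 & b \end{pmatrix}$.
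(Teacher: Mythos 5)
Your first two paragraphs track the paper's proof exactly: the adjacency pattern you anticipate is correct, the eliminations $b_{ij}=A_j-a_{ij}$, $d_{ij}=\widetilde A_i-a_{ij}$, $c_{ij}=T-A_j-\widetilde A_i+a_{ij}$ are precisely the paper's substitutions $B_{pq}=J_q-A_{pq}$, $C_{pq}=I_p-A_{pq}$, $D_{pq}=\Sigma-J_q-I_p+A_{pq}$, and the verification you flag as the main obstacle does go through: the five remaining relation families collapse to $aA_j=0$, $b\widetilde A_i=0$ (two of them become consequences of these by summation, one is a tautology), with $gT=0$ following by B\'ezout. The final invariant-factor bookkeeping and the specialisations at $a\in\{0,1\}$ also agree with the paper. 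The genuine gap is the step that extracts the isomorphism type of the cokernel from this presentation, namely the claimed splitting $\mathbb{Z}^{\alpha\beta}=N\oplus S$ with $N$ the zero-row-and-column-sum sublattice and $S$ having $\mathbb{Z}$-basis $(T,A_1,\ldots,A_{\beta-1},\widetilde A_1,\ldots,\widetilde A_{\alpha-1})$. No such splitting exists: those vectors together with $N$ span only a proper finite-index sublattice of $\mathbb{Z}^{\alpha\beta}$ (already for $\alpha=\beta=2$ the index is $4$), and in fact no direct complement of $N$ can contain the relation vectors $a\,c_j$, $b\,r_i$ (their saturation is the row/column-indicator lattice $P$, and $N\oplus P\subsetneq\mathbb{Z}^{\alpha\beta}$). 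Nor can you instead pass to the quotient by $N$ and hope the extension splits: for $\kappa(3,3)$, i.e.\ $a=b=1$, the true cokernel is torsion-free ($\cong\mathbb{Z}^{4}$), yet $\mathbb{Z}^{9}/(N+L)$ surjects onto $\mathbb{Z}/3$ (the functional $R_1-R_2 \bmod 3$ kills $N$ and all relation generators), so the image of $N$ is not a direct summand and identifying the free part of the cokernel with $N$ would manufacture spurious torsion.

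The repair is exactly the paper's route, and it is small: instead of $N$, take as free part the coordinate sublattice spanned by $A_{pq}$ for $p,q\geq 2$, and perform the unimodular change of generators to $\{A_{pq}\}_{p,q\geq2}\cup\{I_p\}_{p\geq2}\cup\{J_q\}_{q\geq2}\cup\{\Sigma\}$ (this \emph{is} a $\mathbb{Z}$-basis of $\mathbb{Z}^{\alpha\beta}$, since $A_{p1}$, $A_{1q}$, $A_{11}$ are recovered integrally from it). In that basis the relation subgroup is generated by $aJ_q$, $bI_p$ for $p,q\geq2$ together with $a\Sigma$, $b\Sigma$, hence by $aJ_q$, $bI_p$, $g\Sigma$, which is diagonal; this yields $\coker\cong\mathbb{Z}^{(a+1)(b+1)}\oplus(\mathbb{Z}/a)^{b+1}\oplus(\mathbb{Z}/b)^{a+1}\oplus\mathbb{Z}/g$, after which your concluding bookkeeping (doubling the free rank via Corollary \ref{evans_corl} and regrouping $\mathbb{Z}/a\oplus\mathbb{Z}/b\cong\mathbb{Z}/g\oplus\mathbb{Z}/l$) is correct and gives all four cases. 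So the defect is localised to the lattice-splitting argument, but as written that step is false and must be replaced by this change of generators (or an equivalent Smith-normal-form computation of the full relation matrix).
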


\begin{proof}
	We begin by proving (iii) and (iv), since (i) and (ii) are special cases thereof.
	
	So, assume that $a,b \geq 2$. Write $\alpha = a+2$ and $\beta = b+2$, and for $1 \leq i \leq \alpha$, $1 \leq j \leq \beta$, let $A_{ij}$ denote the pointed tile $\big[ u_i^1,v_j^1,u_i^2,v_j^2\big] \in \mathcal{S}(\kappa)$. Similarly, write $B_{ij} := \big[ \bar{u}_i^1, \bar{v}_j^2, \bar{u}_i^2, \bar{v}_j^1 \big]$, $C_{ij} := \big[ \bar{u}_i^2, \bar{v}_j^1, \bar{u}_i^1, \bar{v}_j^2 \big]$, $D_{ij} := \big[ u_i^2,v_j^2,u_i^1,v_j^1 \big]$ for the tiles with the same edge labels as the horizontal reflection, vertical reflection, and rotation by $\pi$ of $A_{ij}$, respectively. Then $\mathcal{S}(\kappa) = \lbrace A_{ij},B_{ij},C_{ij},D_{ij} \mid 1 \leq i \leq \alpha, 1 \leq j \leq \beta \rbrace$, and
	\begin{multline}\label{group1}
	\coker = \coker\big(\mathbf{1}-M_1^T,\mathbf{1}-M_2^T\big) = \Bigg\langle S \in \mathcal{S}(\kappa) \Biggm\vert S = \sum_{T \in \mathcal{S}(\kappa)} M_1(S,T)\cdot T \\
	= \sum_{T \in \mathcal{S}(\kappa)} M_2(S,T)\cdot T \Bigg\rangle .
	\end{multline}
	Now fix $p \in \lbrace 1 ,\ldots , \alpha\rbrace$, $q \in \lbrace 1 , \ldots , \beta\rbrace$, and notice that:
	\begin{itemize}
		\item $M_1(A_{pq}, T) = 1$ iff $T = B_{iq}$; $M_1(B_{pq}, T) = 1$ iff $T = A_{iq}$, for some $i \neq p$,
		\item $M_1(C_{pq}, T) = 1$ iff $T = D_{iq}$; $M_1(D_{pq}, T) = 1$ iff $T = C_{iq}$, for some $i \neq p$,
		
		\item $M_2(A_{pq}, T) = 1$ iff $T = C_{pj}$; $M_2(B_{pq}, T) = 1$ iff $T = D_{pj}$, for some $j \neq q$,
		\item $M_2(C_{pq}, T) = 1$ iff $T = A_{pj}$; $M_2(D_{pq}, T) = 1$ iff $T = B_{pj}$, for some $j \neq q$.
	\end{itemize}
	Hence the relations of (\ref{group1}) are given by equations of the form $A_{pq} = \sum_{i \neq p} B_{iq} = \sum_{j \neq q} C_{pj}$, and so on for each $B_{pq}$, $C_{pq}$, and $D_{pq}$. 
	
	In particular, we can write $B_{pq} = \sum_{i \neq p} A_{iq}$ and $C_{pq} = \sum_{j \neq q} A_{pj}$ so that
	\[
	A_{pq} = (\alpha - 1)A_{pq} + (\alpha - 2)\sum_{i \neq p}A_{iq} \quad\text{and}\quad A_{pq} = (\beta - 1)A_{pq} + (\beta - 2)\sum_{j \neq q}A_{pj}.
	\]
	Define $J_q := \sum_{i=1}^\alpha A_{iq}$, and $I_p := \sum_{j=1}^\beta A_{pj}$. Then $(\alpha - 2)J_q = (\beta - 2)I_p = 0$, and viewing the sum of all the tiles $A_{ij}$ both as the sum of all the $I_i$ and of the $J_j$, we conclude also that $g\Sigma = 0$, where $\Sigma := \sum_{i,j} A_{ij}$.
	
	Now, we can also write $D_{pq}$ (and all of the relevant relations) in terms of the $A_{ij}$, namely $D_{pq} = \sum_{i \neq p} \sum_{j \neq q} A_{ij}$. Hence we can remove all the $B_{pq}$, $C_{pq}$, and $D_{pq}$ from the list of generators of $\coker$, such that
	\begin{multline}\label{eq_presentation1}
	\coker = \big\langle A_{pq} \bigm\vert (\alpha -2)J_q = (\beta -2)I_p = 0, J_q = \textstyle\sum_i A_{iq}, I_p = \textstyle\sum_j A_{pj}, \\
	\text{ for } 1 \leq p \leq \alpha, 1 \leq q \leq \beta \big\rangle .
	\end{multline}
	We have the following equalities: 
	\[
	A_{p1} = I_p - \sum_{j=2}^\beta A_{pj},\quad
	A_{1q} = J_q - \sum_{i=2}^\alpha A_{iq},\quad
	I_1 = \Sigma - \sum_{i=2}^\alpha I_i,\quad
	J_1 = \Sigma - \sum_{j=2}^\beta J_j.
	\]
	Furthermore, $A_{11}$ may be expressed in terms of $\Sigma$, $I_p$, $J_q$, and $A_{pq}$ for $p,q \geq 2$, and so after a sequence of Tietze transformations on (\ref{eq_presentation1}), we find that
	\begin{equation}\label{eq_presentation2}
	\coker = \langle \Sigma, I_p, J_q, A_{pq} \mid (\alpha -2)J_q = (\beta -2)I_p = g\Sigma = 0, \text{ for } 2 \leq p \leq \alpha, 2 \leq q \leq \beta\rangle ,
	\end{equation}
	where $g:= \gcd(\alpha -2,\beta -2)$. This, after substituting $a = \alpha -2$, $b = \beta -2$, gives a presentation for $(\mathbb{Z}/b )^{a+1} \oplus ( \mathbb{Z}/a )^{b+1} \oplus ( \mathbb{Z}/g) \oplus \mathbb{Z}^{(a+1)(b+1)}$. In particular, we have $a+1$ copies of $( \mathbb{Z}/b ) \oplus ( \mathbb{Z}/a )$.  It is well-known that if $a$ and $b$ are not coprime, $( \mathbb{Z}/b ) \oplus ( \mathbb{Z}/a ) \cong ( \mathbb{Z}/l ) \oplus ( \mathbb{Z}/g )$; in case (iv), this together with Corollary \ref{evans_corl} immediately gives the desired result. In case (iii), where $a$ and $b$ are coprime, we instead have that $( \mathbb{Z}/b ) \oplus ( \mathbb{Z}/a ) \cong ( \mathbb{Z}/ab)$, and we are done.
	
	Now consider case (i), where $\alpha = \beta = 2$. Then, following the method above, $\coker$ is generated by $\lbrace A_{pq} \mid p,q = 1,2 \rbrace$ with trivial relations, and so $\coker \cong \mathbb{Z}^4$. Hence by Corollary \ref{evans_corl}, $K_\epsilon(\cst(\kappa)) \cong \mathbb{Z}^8$.
	
	Similarly, when $\alpha = 2$ and $\beta \geq 3$, it is straightforward to show that
	\[
	\coker = \langle I_p, A_{pq} \mid (\beta - 2)I_p = 0, \text{ for } p =1,2 \text{ and } 2 \leq q \leq \beta\rangle,
	\]
	and when $\alpha = 3$ and $\beta \geq 3$, we have
	\[
	\coker = \langle \Sigma,I_p,J_q, A_{pq} \mid J_q = (\beta - 2)I_p = \Sigma = 0, \text{ for } p =2,3 \text{ and } 2 \leq q \leq \beta\rangle,
	\]
	both of which are presentations for $(\mathbb{Z}/(\beta - 2))^2 \oplus \mathbb{Z}^{2(\beta - 1)}$; hence by Corollary \ref{evans_corl}, (ii) is proved.
\end{proof}

\begin{figure}[h]
	\begin{center}
		\begin{tikzpicture}
		\tikzset{
			big dot/.style={
				circle, inner sep=0pt, 
				minimum size=1.5mm, fill=black
			}
		}
		\node[big dot] (A) at (0,0) {};
		\node[big dot] (B) at (2,0) {};
		
		\node[big dot] (C) at (4,0) {};
		\node[big dot] (D) at (6,0) {};
		
		\draw[->, thick, >=stealth, cyan, bend right] (A) to (B);
		\draw[->, thick, >=stealth, cyan, bend right] (B) to (A);
		\draw[->, thick, >=stealth, cyan, bend right] (C) to (D);
		\draw[->, thick, >=stealth, cyan, bend right] (D) to (C);
		
		\draw[] (3,0) node{$\times$};
		\draw[] (1,0.6) node{$C_2$};
		\draw[] (5,0.6) node{$C_2$};
		\end{tikzpicture}
		\caption{The $2$-graph $\Lambda(\kappa(2,2))$, depicted in Figure \ref{fig_2-rank}, consists of four copies of $C_2 \times C_2$, where $C_2$ is the cyclic $1$-graph with two vertices.}\label{fig_example}
	\end{center}
\end{figure}
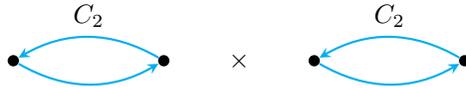

\begin{ex}
	Recall the tile system corresponding to $\kappa(2,2)$, given as an example in Figure \ref{fig_2-rank}. From the diagram, we can see that the ($1$-skeleton of the) $2$-rank graph $\Lambda(\kappa(2,2))$ comprises four connected components, each being the Cartesian product $C_2 \times C_2$, depicted in Figure \ref{fig_example}. It is well known that the $k$-graph $\cst$-algebra of $C_2$ is isomorphic to $M_2(C(\mathbb{T}))$. Furthermore, there is a natural isomorphism $\cst(C_m \times C_n) \cong M_{mn}(C(\mathbb{T}^2))$, and so $\cst(\kappa(2,2)) \cong (M_4(C(\mathbb{T}^2)))^4$. The K-groups of this $\cst$-algebra are both $\mathbb{Z}^8$, in agreement with Theorem \ref{pointed}.
\end{ex}

\begin{thm}\label{pointed_identity}
	Let $\alpha, \beta \geq 3$, and let $\kappa = \kappa (\alpha, \beta)$ be the complete bipartite graph on $\alpha$ white and $\beta$ black vertices. Then the order of the class of the identity $[\mathbf{1}]$ in $K_0(\cst(\Lambda(\kappa)))$ is equal to $g := \gcd(\alpha - 2, \beta - 2)$.
\end{thm}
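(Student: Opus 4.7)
The strategy is to locate the class $[\mathbf{1}]$ explicitly inside the cokernel $\coker(\mathbf{1}-M_1^T,\mathbf{1}-M_2^T)$, which by Corollary \ref{evans_corl} embeds into $K_0(\cst(\Lambda(\kappa)))$ as a subgroup containing all torsion, and then to read off its order using the explicit presentation (\ref{eq_presentation2}) developed in the proof of Theorem \ref{pointed}.

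Since $\Lambda^{\mathbf{0}} = \mathcal{S}(\kappa)$ is finite, $\mathbf{1} = \sum_{S \in \mathcal{S}(\kappa)} s_S$, so $[\mathbf{1}] = \sum_{S} [s_S]$; under Evans' identification the vertex class $[s_S]$ corresponds to the generator $S$ of the cokernel, so $[\mathbf{1}]$ is represented by $\sum_{S \in \mathcal{S}(\kappa)} S$. I would then invoke the relations $B_{pq} = \sum_{i\neq p}A_{iq}$, $C_{pq} = \sum_{j\neq q}A_{pj}$ and $D_{pq} = \sum_{i\neq p,\,j\neq q}A_{ij}$ established in the proof of Theorem \ref{pointed}, summing each tile type over $(p,q)$ to obtain $\Sigma$, $(\alpha-1)\Sigma$, $(\beta-1)\Sigma$ and $(\alpha-1)(\beta-1)\Sigma$ respectively. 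Adding these four contributions collapses the sum to $[\mathbf{1}] = \alpha\beta\,\Sigma$ in the cokernel.

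For the order, I use that $\Sigma$ has order exactly $g$ in $\coker$ (it generates the $\mathbb{Z}/g$ summand in presentation (\ref{eq_presentation2})) together with the congruences $\alpha \equiv \beta \equiv 2 \pmod{g}$ that follow from $g \mid \alpha-2$ and $g \mid \beta-2$. Therefore $\alpha\beta \equiv 4 \pmod{g}$, so $[\mathbf{1}]$ may be rewritten as $4\Sigma$ inside the $\mathbb{Z}/g$ summand generated by $\Sigma$, and the relation $g\Sigma = 0$ immediately gives the upper bound $g \cdot [\mathbf{1}] = 0$. The main obstacle will be the matching lower bound, namely showing that no proper divisor of $g$ annihilates $[\mathbf{1}]$: this is automatic when $\gcd(4, g) = 1$ (in particular when $g$ is odd, since then $4$ is invertible modulo $g$), but the even case is delicate and will require either tracing the additional contributions of $[\mathbf{1}]$ inside the other $\mathbb{Z}/g$ summands appearing in Theorem \ref{pointed}(iv) after the pairings $\mathbb{Z}/a \oplus \mathbb{Z}/b \cong \mathbb{Z}/l \oplus \mathbb{Z}/g$, or a more conceptual argument such as a $K$-theoretic trace on $\cst(\Lambda(\kappa))$.
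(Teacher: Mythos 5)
Your route is the same as the paper's: the paper's entire proof is to cite \cite{KimRob2002} to identify the order of $[\mathbf{1}]$ with the order of $\sum_{S\in\mathcal{S}(\kappa)}S$ in $\coker\big(\mathbf{1}-M_1^T,\mathbf{1}-M_2^T\big)$, and then to appeal to ``considerations in the proof of Theorem \ref{pointed}.'' Your explicit computation of that sum is correct and goes further than the paper does, and it lands exactly on the sore point. Since $B_{pq}=J_q-A_{pq}$, $C_{pq}=I_p-A_{pq}$ and $D_{pq}=\Sigma-I_p-J_q+A_{pq}$ in the cokernel, one has $A_{pq}+B_{pq}+C_{pq}+D_{pq}=\Sigma$ for every $(p,q)$, so $[\mathbf{1}]=\alpha\beta\,\Sigma=4\Sigma$ \emph{exactly}: it has zero component along every generator $I_p$, $J_q$, $A_{pq}$ ($p,q\geq 2$) of the presentation (\ref{eq_presentation2}) and lies entirely in the direct summand $\langle\Sigma\rangle\cong\mathbb{Z}/g$. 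Consequently the rescue you propose for even $g$ --- tracing contributions of $[\mathbf{1}]$ into the other $\mathbb{Z}/g$ summands arising from $(\mathbb{Z}/a)\oplus(\mathbb{Z}/b)\cong(\mathbb{Z}/l)\oplus(\mathbb{Z}/g)$ --- cannot work: there are no such contributions, and under this identification the order of $[\mathbf{1}]$ is precisely $g/\gcd(4,g)$.

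So your argument proves the stated claim exactly when $g$ is odd, and the gap you flag for even $g$ is not one that harder work along the same lines can close: with $[\mathbf{1}]$ identified with the class of the all-ones vector, the order drops to $g/2$ when $g\equiv 2\pmod 4$ and to $g/4$ when $4\mid g$. Be aware that the paper's proof does not fill this gap either: it implicitly equates the order of the sum of \emph{all} pointed tiles with the order of $\Sigma=\sum_{i,j}A_{ij}$, overlooking the factor $\alpha\beta\equiv 4\pmod g$ that your computation makes visible; compare Theorem \ref{unpointed_identity}, where an entirely analogous factor of $4$ is what forces the odd/even case split there. The honest conclusions available to you are: (a) for odd $g$ your proof is complete; (b) for even $g$ either the statement needs the same kind of adjustment as Theorem \ref{unpointed_identity} (order $g/\gcd(4,g)$), or one needs an identification of $[\mathbf{1}]$ genuinely different from the one both you and the paper invoke. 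As written, your proposal cannot be completed to a proof of the theorem for even $g$, and you should say so rather than defer it as a technical step.
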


\begin{proof}
	From \cite{KimRob2002}, it follows that the order of $[\mathbf{1}]$ in $K_0(\cst(\kappa))$ is equal to the order of the sum of pointed tiles in $\mathcal{S}(\kappa)$; by considerations in the proof of Theorem \ref{pointed}, we know this to be $g$.
\end{proof}

\section{Aperiodicity and Kirchberg-Phillips Classification}\label{S_aperiodicity}

Kumjian and Pask in \cite{KumPas2000} have developed conditions under which the $\cst$-algebra of a $k$-rank graph is both simple and purely-infinite. In this section we show that the conditions are satisfied by the algebras $\cst(\kappa)$, and thus, by Kirchberg and Phillips \cite{KirPrep}, \cite{Phi2000}, that the $\cst(\kappa)$ are completely classified by their K-theory. We detail the following definitions from \cite{KumPas2000}.

Let $k \geq 1$, and let $\Omega_k$ be the countable small category defined by object set $\Ob(\Omega_k) := \mathbb{N}_0^k$ and morphism set
\[
\Hom(\Omega_k) := \big\lbrace (\mathbf{m}, \mathbf{n}) = (m_1, \ldots , m_k, n_1,\ldots , n_k) \in \mathbb{N}_0^k \times \mathbb{N}_0^k \bigm\vert  m_i \leq n_i \text{ for all } 1 \leq i \leq k \big\rbrace .
\]
We identify $\Ob(\Omega_k)$ with the set of identity morphisms $\big\lbrace (\mathbf{m}, \mathbf{m}) \bigm\vert \mathbf{m} \in \mathbb{N}_0^k \big\rbrace$, and hence identify $\Omega_k$ with $\Hom(\Omega_k)$. Define range and source maps $r(\mathbf{m},\mathbf{n}) := \mathbf{m}$ and $s(\mathbf{m},\mathbf{n}) := \mathbf{n}$, respectively. Then $\Omega_k$ together with the degree map $d(\mathbf{m},\mathbf{n}) := \mathbf{n} - \mathbf{m}$ is a $k$-rank graph, which we can visualise as a non-negative integer lattice in $\mathbb{R}^k$ (c.f. Figure 6).

\begin{defn}\label{period}
	Let $\Lambda$ be a $k$-rank graph. We define the \textbf{infinite path space} $\Lambda^\infty$ of $\Lambda$ to be $\Lambda^\infty := \lbrace \varphi: \Omega_k \rightarrow \Lambda \mid \varphi \text{ is a $k$-graph morphism} \rbrace$. 
	
	Given a vertex $v \in \Lambda^{\mathbf{0}}$, we write $\Lambda^\infty(v)$ for the set of infinite paths which begin at $v$, that is, $\Lambda^\infty(v) := \lbrace \varphi \in \Lambda^\infty \mid \varphi(\mathbf{0}) = v \rbrace$.
	
	Let $\mathbf{p} \in \mathbb{Z}^k$, and let $\varphi \in \Lambda^\infty$. We say that $\mathbf{p}$ is a \textbf{period} for $\varphi$ if, for every $(\mathbf{m},\mathbf{n}) \in \Omega_k$ with $\mathbf{m} + \mathbf{p} \geq \mathbf{0}$, we have $\varphi(\mathbf{m}+\mathbf{p}, \mathbf{n}+\mathbf{p}) = \varphi(\mathbf{m}, \mathbf{n})$. We call $\varphi$ \textbf{periodic} if we can find a nonzero period.
	
	Given some $\mathbf{q} \in \mathbb{N}_0^k$ and a path $\varphi \in \Lambda^\infty$, we write $\varphi_\mathbf{q}(\mathbf{m},\mathbf{n}) := (\mathbf{m}+\mathbf{q}, \mathbf{n}+\mathbf{q})$. We say that $\varphi$ is \textbf{eventually periodic} if we can find some nonzero $\mathbf{q} \in \mathbb{N}_0^k$ such that $\varphi_\mathbf{q}$ is periodic. We say that an infinite path $\varphi$ is \textbf{aperiodic} if it is neither periodic nor eventually periodic.
	
	We say that $\Lambda$ satisfies the \textbf{Aperiodicity Condition} (also referred to in the literature as \textbf{Condition (A)}) if, for every vertex $v \in \Lambda^\mathbf{0}$, we can find an aperiodic path $\varphi \in \Lambda^\infty(v)$.
	
	We say that $\Lambda$ is \textbf{cofinal} if, for every vertex $v \in \Lambda^\mathbf{0}$ and every infinite path $\varphi \in \Lambda^\infty$, we can find $\lambda \in \Lambda$ and $\mathbf{n} \in \mathbb{N}_0^k$ such that $r(\lambda) = v$ and $s(\lambda) = \varphi(\mathbf{n})$.
\end{defn}

The Aperiodicity Condition is a generalisation of the condition on $1$-graphs that every cycle have an exit. Similarly, cofinality is a generalisation of the property that every vertex in a $1$-graph can be reached from somewhere on every infinite path.

\begin{lem}
	Consider the complete bipartite graph $\kappa = \kappa(\alpha,\beta)$ for $\alpha,\beta \geq 3$, and let $\Lambda(\kappa)$ be the corresponding $2$-rank graph as constructed by Proposition \ref{tile_2rank}. Then $\Lambda(\kappa)$ satisfies the Aperiodicity Condition.
\end{lem}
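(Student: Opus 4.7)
The plan is to build, for each vertex $v \in \Lambda(\kappa)^\mathbf{0}$, an explicit aperiodic infinite path $\varphi \in \Lambda(\kappa)^\infty(v)$. The key observation is that $\Lambda(\kappa)$ has such a rigid product structure that $\varphi$ is entirely determined by two one-dimensional ``coordinate sequences'', and aperiodicity of $\varphi$ reduces to aperiodicity of these sequences.

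First I would unpack the adjacency data of Proposition~\ref{tile_complex_uce}. The horizontal out-neighbours of $A_{ij}$ are exactly $\big\lbrace B_{kj} \bigm\vert k \neq i \big\rbrace$, and the analogous statements for the other three tile types show that horizontal adjacency preserves the ``$v$-subscript'' $j$ and toggles within the class $\lbrace A,B\rbrace$ or $\lbrace C,D\rbrace$, while vertical adjacency preserves the ``$u$-subscript'' $i$ and toggles within $\lbrace A,C\rbrace$ or $\lbrace B,D\rbrace$. Fixing $v = A_{i_0 j_0}$ (the three remaining initial types are handled identically), any infinite path $\varphi$ determines by its bottom row a sequence $(i_p)_{p \geq 0}$ with $i_{p+1} \neq i_p$, and by its left column a sequence $(j_q)_{q \geq 0}$ with $j_{q+1} \neq j_q$. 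A straightforward induction on $p + q$, applying the UCE Property at each step, shows that $\varphi(p, q) = T_{i_p, j_q}$ where $T \in \lbrace A,B,C,D\rbrace$ is the tile class dictated by the parities of $(p, q)$; conversely, any such admissible pair of sequences defines a $k$-graph morphism $\Omega_2 \to \Lambda(\kappa)$.

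Given this structural description, aperiodicity becomes purely combinatorial. If $\mathbf{p} = (p_1, p_2) \in \mathbb{Z}^2$ is a period of $\varphi_\mathbf{q}$ for some $\mathbf{q} \in \mathbb{N}_0^2$, then matching tile classes at $(m, n)$ and $(m + p_1, n + p_2)$ forces $p_1$ and $p_2$ both to be even, while matching subscripts forces $(i_p)_{p \geq q_1}$ to be $p_1$-periodic and $(j_q)_{q \geq q_2}$ to be $p_2$-periodic. Hence $\varphi$ is aperiodic if and only if neither $(i_p)$ nor $(j_q)$ is eventually periodic. Since $\alpha, \beta \geq 3$ yields at least two admissible choices for each $i_{p+1}$ and $j_{q+1}$, we can select both sequences to avoid eventual periodicity (for instance, a Thue--Morse pattern on two of the symbols, with a third symbol inserted whenever the binary rule would force a repeat). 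The resulting $\varphi$ is then aperiodic, and since $v$ was arbitrary, $\Lambda(\kappa)$ satisfies the Aperiodicity Condition. The principal step requiring care is the inductive identification $\varphi(p, q) = T_{i_p, j_q}$; once this is in place the remainder of the argument is mechanical.
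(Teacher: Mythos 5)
Your proposal is correct and follows essentially the same route as the paper: both arguments build an aperiodic path from an arbitrary vertex by choosing aperiodic horizontal and vertical coordinate sequences (possible since $\alpha,\beta\geq 3$ give at least two admissible continuations at each step) and extending uniquely over $\Omega_2$ via the UCE Property. Your intermediate identification $\varphi(p,q)=T_{i_p,j_q}$ with the class $T$ fixed by parities just makes explicit the reduction the paper carries out for one concrete choice of sequences (symbols placed at positions $r^2+r+1$, with periods excluded by tracking the tile $D$), and any non-eventually-periodic ternary sequence without consecutive repeats serves in place of your Thue--Morse variant.
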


\begin{figure}
	\begin{center}
		\includegraphics[scale=0.7]{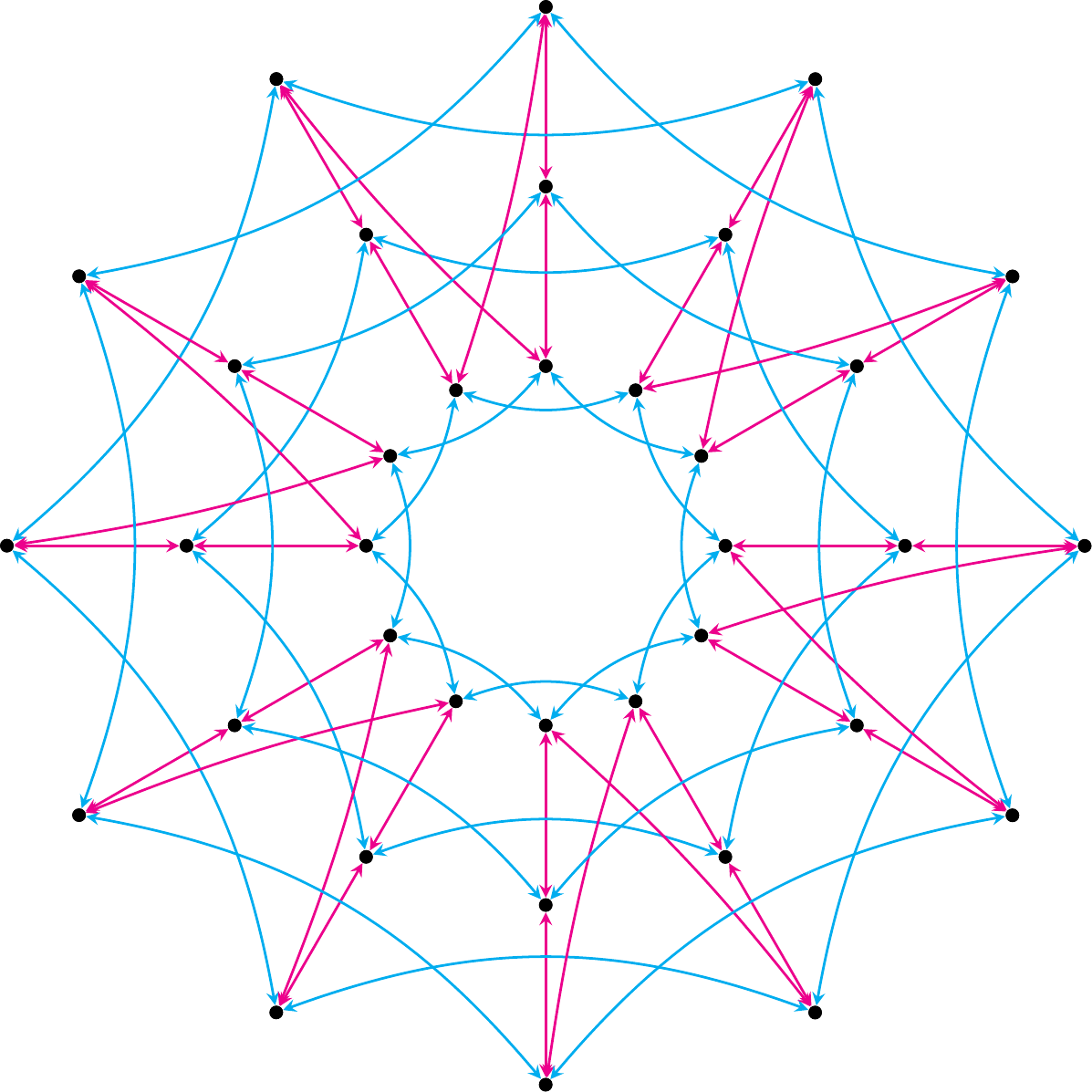}
	\end{center}
\caption{A representation of $\Lambda(\kappa(3,3))$. It is always possible to exit a cycle.}\label{fig_3,3}
\end{figure}

In order to get a feeling as to why this is true, consider Figure \ref{fig_3,3}, which shows a representation of $\Lambda(\kappa(3,3))$. Each vertex is labelled by a pointed tile from $\mathcal{S}(\kappa(3,3))$, and since each tile is vertically-adjacent to two others (and horizontally-adjacent to two others), there are two blue arrows and two magenta arrows emanating from each vertex of $\Lambda(\kappa(3,3))$. This suggests that, analogously to the $1$-graph condition, we can always find an exit to some cycle in $\Lambda$, namely by stopping mid-cycle at a vertex, and diverting the path down the second of the two available edges. Hence, as long as $\alpha, \beta \geq 3$, there will be enough choice at each vertex to be able to exit a cycle.

\begin{proof}
	Firstly, write $\Lambda = \Lambda(\kappa)$, let $A \in \Lambda^\mathbf{0}$ be an arbitrary vertex. We construct an aperiodic infinite path beginning from $A$ in the following way:
	
	Let $x:\Omega_1 \rightarrow \bigcup_{m \geq 0} \Lambda^{(m,0)}$ be a $1$-graph morphism such that $x(0) = A$. The vertex $A$ represents a pointed tile in $\mathcal{S}(\kappa)$, which is horizontally-adjacent to $\beta - 1$ other pointed tiles. Hence $A$ is connected by bidirectional blue arrows to $\beta - 1$ other vertices in $\Lambda$. Choose two of these vertices, $B_1$ and $B_2$, say, and let $x$ be such that
	\[
	x(m,m) = \begin{cases}
	A	&	\text{if $m$ is even,} \\
	B_1	&	\text{if $m = r^2 + r + 1$, for some $r \geq 1$,} \\
	B_2	&	\text{otherwise,}
	\end{cases}
	\]
	for all $m \in \mathbb{N}_0$. Since this forms an aperiodic sequence, there is no $p \in \mathbb{Z}$ such that $x(m,m) = x(m+p,m+p)$ for all $m$, nor any $q \in \mathbb{N}$ such that $x_q$ is periodic; hence $x$ is an aperiodic path. Similarly, define $y : \Omega_1 \rightarrow \bigcup_{n \geq 0} \Lambda^{(0,n)}$ by
	\[
	y(n,n) = \begin{cases}
	A	&	\text{if $n$ is even,} \\
	C_1	&	\text{if $n = s^2 + s + 1$, for some $s \geq 1$,} \\
	C_2	&	\text{otherwise,}
	\end{cases}
	\]
	for some vertices labelled by pointed tiles $C_1$, $C_2$ which are vertically-adjacent to $A$. Then $y$ is also an aperiodic path.	By the UCE Property, $x$ and $y$ uniquely determine an infinite path $\varphi : \Omega_2 \rightarrow \Lambda$ with $\varphi((m,0),(m,0)) = x(m,m)$ and $\varphi((0,n),(0,n)) = y(n,n)$.
	
	Let $D$ denote the unique pointed tile (other than $A$) adjacent to both $B_1$ and $C_1$. This cannot also be adjacent to $B_2$, nor to $C_2$, and so $\varphi((m,n),(m,n)) = D$ precisely when $m = r^2+r+1$ and $n=s^2+s+1$, for some $r,s \geq 1$. As above, there is no $\mathbf{p} \in \mathbb{Z}^2$ such that $\varphi((m,n),(m,n)) = \varphi((m,n)+\mathbf{p},(m,n)+\mathbf{p})$, nor any $\mathbf{q} \in \mathbb{N}_0^2$ such that $\varphi_\mathbf{q}$ is periodic. Since our initial vertex $A$ was arbitrary, we are done.
\end{proof}

The following definitions will be required for the rest of the section. For the reader who desires more detail, we recommend \cite[Chapter 5]{RorLarLau2000}.

\begin{defn}
	Let $\mathcal{A}$ be a unital $\cst$-algebra, and let $\mathcal{B} \subset \mathcal{A}$ be a $\cst$-subalgebra. We say that $\mathcal{B}$ is \textbf{hereditary} if, for all $a,b \in \mathcal{A}$, if $b \in \mathcal{B}$ and $a \leq b$, then $a \in \mathcal{B}$.	
	
	We say that $\mathcal{A}$ is \textbf{simple} if it has no non-trivial closed two-sided ideals.
	
	If $\mathcal{A}$ is simple, we say that it is \textbf{purely-infinite} if every nonzero hereditary $\cst$-subalgebra of $\mathcal{A}$ contains a projection which is Murray-von Neumann equivalent to a proper subprojection of itself. Equivalently, $\mathcal{A}$ is purely-infinite if every nonzero hereditary $\cst$-subalgebra contains a projection equivalent to $1$.
\end{defn}

\begin{thm}[Kumjian-Pask]\label{kumjian-pask_criteria}
	Let $\Lambda$ be a $k$-rank graph which satisfies the Aperiodicity Condition. Then the associated universal $\cst$-algebra $\cst(\Lambda)$ is simple if and only if $\Lambda$ is cofinal.
\end{thm}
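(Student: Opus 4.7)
This is essentially Theorem 3.4 of Kumjian and Pask \cite{KumPas2000}, so the plan is to adapt their proof to our setting, using that the ingredients (row-finiteness, no sources, Aperiodicity) are already in place. The forward direction, that simplicity forces cofinality, I would argue contrapositively. If $\Lambda$ fails cofinality, there is a witness pair $(v,\varphi)$ such that no morphism $\lambda \in \Lambda$ satisfies both $r(\lambda) = v$ and $s(\lambda) = \varphi(\mathbf{n})$ for some $\mathbf{n}$. Let $I$ be the closed two-sided ideal of $\cst(\Lambda)$ generated by the projections $\{s_{\varphi(\mathbf{n})}\}_{\mathbf{n}}$; a routine check shows that the set $H := \{w \in \Lambda^\mathbf{0} : s_w \in I\}$ is saturated and hereditary, contains every $\varphi(\mathbf{n})$, and excludes $v$. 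Thus $I$ is a proper nonzero ideal, contradicting simplicity.

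For the reverse implication, assume cofinality and fix a nonzero ideal $I \trianglelefteq \cst(\Lambda)$; the goal is to show $I = \cst(\Lambda)$. The first step is to exhibit some vertex projection $s_v$ in $I$. To do this I would average an arbitrary nonzero $a \in I$ over the gauge action of $\mathbb{T}^k$ on $\cst(\Lambda)$, producing an element $E(a)$ of the fixed-point AF-subalgebra $\cst(\Lambda)^\gamma$; the Aperiodicity Condition, via the standard Kumjian--Pask argument, is precisely what ensures $E$ remains injective on the ideal, so $E(a) \neq 0$ for a suitably chosen $a$. One then manipulates $E(a)$ by the partial isometries $s_\mu, s_\mu^*$ to produce a nonzero scalar multiple of some $s_v$, giving $s_v \in I$.

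With a vertex projection in hand, let $H := \{w \in \Lambda^\mathbf{0} : s_w \in I\}$, nonempty by the previous step. I would verify that $H$ is saturated and hereditary: hereditariness follows from $s_\mu^* s_{r(\mu)} s_\mu = s_{s(\mu)}$ together with the ideal property, and saturation follows directly from the Cuntz--Krieger relation (d) of Definition \ref{graph_algebra}. Cofinality then propagates $H$ to all of $\Lambda^\mathbf{0}$: given any $w$, select an infinite path $\varphi$ beginning at some $v \in H$, use hereditariness along $\varphi$ to place each $\varphi(\mathbf{n})$ in $H$, and apply cofinality to find $\lambda$ with $r(\lambda) = w$ and $s(\lambda) = \varphi(\mathbf{n}) \in H$; iterating saturation over all sufficiently long prefixes then forces $w \in H$. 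Finally, since $|\Lambda^\mathbf{0}| < \infty$ we conclude $\mathbf{1} = \sum_{w \in \Lambda^\mathbf{0}} s_w \in I$, so $I = \cst(\Lambda)$.

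The main obstacle is the first step: extracting a vertex projection from an arbitrary nonzero element of a given ideal. This is the technical heart of the $k$-graph Cuntz--Krieger uniqueness theorem, and the Aperiodicity Condition is exactly what is needed to make the gauge-invariant conditional expectation onto the fixed-point algebra faithful on ideals; by contrast, the cofinality half of the argument, once a single $s_v$ is in $I$, is essentially a combinatorial saturation exercise using the relations of Definition \ref{graph_algebra}.
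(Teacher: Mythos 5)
The paper itself gives no proof of this statement---it is quoted from Kumjian and Pask \cite{KumPas2000}---so the benchmark is the standard argument, whose overall shape (gauge expectation plus Aperiodicity to extract a vertex projection; saturated hereditary vertex sets plus cofinality to propagate it) your outline follows. However, your forward direction contains a genuine error. From a failure of cofinality witnessed by $(v,\varphi)$ you form the ideal $I$ generated by $\{s_{\varphi(\mathbf{n})}\}$ and claim that $H:=\{w:\ s_w\in I\}$ excludes $v$. But the failure of cofinality only says there is no $\lambda$ with $r(\lambda)=v$ and $s(\lambda)=\varphi(\mathbf{n})$; it does not rule out morphisms in the opposite direction, with range on the path and source $v$, and any such $\lambda$ forces $v\in H$ by hereditariness, since $s_{s(\lambda)}=s_\lambda^* s_{r(\lambda)} s_\lambda\in I$. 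Concretely, take the $1$-graph with vertices $a,b$, two loops at each vertex, and one extra edge $e$ with $r(e)=b$, $s(e)=a$: cofinality fails for $v=a$ and the constant infinite path at $b$, yet the ideal generated by $s_b$ contains $s_a=s_e^*s_bs_e$, hence equals $\cst(\Lambda)$, and no contradiction with simplicity is produced. The standard repair is to work with the complementary set $H:=\{w\in\Lambda^{\mathbf{0}}:\ \text{there is no }\lambda\text{ with }r(\lambda)=w,\ s(\lambda)=\varphi(\mathbf{n})\text{ for any }\mathbf{n}\}$, which is hereditary and saturated, contains $v$, misses every $\varphi(\mathbf{n})$, and whose associated gauge-invariant ideal is nonzero and proper (properness itself needs the gauge-invariant ideal structure or a Cuntz--Krieger family for the quotient, not just a ``routine check'').

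In the reverse direction the final propagation step is also not right as stated. Knowing that $\varphi(\mathbf{n})\in H$ and that one morphism $\lambda$ has $r(\lambda)=w$, $s(\lambda)=\varphi(\mathbf{n})$ does not put $w$ into $H$: saturation requires \emph{all} minimal extensions of $w$ in a coordinate direction to have sources in $H$, so ``iterating saturation over prefixes'' is a non sequitur. The correct argument is contrapositive: if $w\notin H$, use the failure of saturation at $w$ repeatedly, in all $k$ coordinate directions, to construct an infinite path $\psi\in\Lambda^\infty(w)$ with $\psi(\mathbf{n})\notin H$ for every $\mathbf{n}$; then apply cofinality to a vertex $v\in H$ and this path to obtain $\lambda$ with $r(\lambda)=v$ and $s(\lambda)=\psi(\mathbf{n})$, whence hereditariness gives $\psi(\mathbf{n})\in H$, a contradiction. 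Your description of the first half---that the Aperiodicity Condition makes the gauge expectation effective on ideals so that a nonzero ideal contains some $s_v$---is indeed the technical heart and matches the Kumjian--Pask uniqueness argument; it is the two combinatorial steps above that need to be fixed.
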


\begin{thm}[Kumjian-Pask, Sims]\label{kps}
	Let $\Lambda$ be a $k$-rank graph which is cofinal and which satisfies the Aperiodicity Condition. Suppose that, for every $v \in \Lambda^\mathbf{0}$, we can find $\lambda \in \Lambda$ with $r(\lambda) = v$, and some cycle $\mu \in \Lambda$ with an entrance, such that $d(\mu) \neq \mathbf{0}$, and $s(\lambda) = r(\mu) = s(\mu)$. Then $\cst(\Lambda)$ is purely-infinite. \cite[Proposition 8.8]{Sim2006}
\end{thm}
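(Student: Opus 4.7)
The plan is to combine the simplicity already guaranteed by Theorem \ref{kumjian-pask_criteria} with a two-step argument. Since $\Lambda$ is cofinal and satisfies the Aperiodicity Condition, Theorem \ref{kumjian-pask_criteria} tells us that $\cst(\Lambda)$ is simple, so by the definition of purely-infinite stated above it suffices to exhibit, inside every nonzero hereditary $\cst$-subalgebra of $\cst(\Lambda)$, an infinite projection. I would establish this in two stages: first, that each vertex projection $s_v$ is itself infinite; and second, that every nonzero hereditary subalgebra contains a projection Murray--von Neumann equivalent to some $s_v$.

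For the first stage, fix $v \in \Lambda^{\mathbf{0}}$ and take $\lambda$, $\mu$ as in the hypotheses, so that $r(\lambda) = v$, $s(\lambda) = r(\mu) = s(\mu)$, $d(\mu) \neq \mathbf{0}$, and $\mu$ admits an entrance; equivalently, there is some $\nu \in \Lambda^{d(\mu)}(r(\mu))$ with $\nu \neq \mu$. By Definition \ref{graph_algebra}(c), $s_\mu^{*} s_\mu = s_{s(\mu)} = s_{r(\mu)}$, while Definition \ref{graph_algebra}(d) with $\mathbf{n} = d(\mu)$ gives
\[
s_{r(\mu)} \;=\; \sum_{\eta \in \Lambda^{d(\mu)}(r(\mu))} s_\eta s_\eta^{*} \;\geq\; s_\mu s_\mu^{*} + s_\nu s_\nu^{*},
\]
so $s_\mu s_\mu^{*}$ is a proper subprojection of $s_{r(\mu)}$, and hence $s_{r(\mu)}$ is properly infinite. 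The partial isometry $s_\lambda$ satisfies $s_\lambda^{*} s_\lambda = s_{s(\lambda)} = s_{r(\mu)}$ and $s_\lambda s_\lambda^{*} \leq s_{r(\lambda)} = s_v$, so transporting along $\lambda$ yields a subprojection of $s_v$ equivalent to the infinite projection $s_{r(\mu)}$, making $s_v$ itself infinite.

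The harder step, and the main obstacle, is to show that every nonzero hereditary $\cst$-subalgebra $B$ of $\cst(\Lambda)$ contains a projection equivalent to some $s_v$. The standard route invokes the Aperiodicity Condition through a Cuntz--Krieger-style argument: given $0 \neq b \in B$, one averages $b^{*}b$ over the canonical gauge action of $\mathbb{T}^{k}$ on $\cst(\Lambda)$ to obtain a nonzero element of the fixed-point algebra (the AF \emph{core}), and then uses aperiodicity, together with the gauge-invariant uniqueness theorem of \cite{KumPas2000}, to compress by a suitable $s_\lambda s_\lambda^{*}$ and isolate a nonzero scalar multiple of a vertex projection inside $\overline{b^{*}\cst(\Lambda)b} \subseteq B$. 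This is the technical heart of the matter and the reason one defers to \cite[Proposition 8.8]{Sim2006}; once it is in hand, combining with the first stage produces an infinite projection inside every nonzero hereditary subalgebra, whence $\cst(\Lambda)$ is purely-infinite.
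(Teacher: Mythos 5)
This theorem is not proved in the paper at all: it is quoted as an external result, with the entire argument delegated to the citation \cite[Proposition 8.8]{Sim2006}, so there is no in-paper proof to measure your sketch against. Your outline is the standard one and is sound as far as it goes. The first stage is essentially complete: under the paper's standing hypotheses (row-finite, no sources) an entrance to the cycle $\mu$ does yield some $\nu \in \Lambda^{d(\mu)}(r(\mu))$ with $\nu \neq \mu$ (extend the entrance using the no-sources condition, conjugating by an initial segment of $\mu$ if the entrance occurs at an intermediate vertex), and your computation with relations (c) and (d) of Definition \ref{graph_algebra} then shows $s_{r(\mu)}$ is infinite and transports along $s_\lambda$ to make $s_v$ infinite; the only unstated ingredient is that vertex projections, and hence $s_\nu s_\nu^{*}$, are nonzero in the universal algebra, which is standard for row-finite $k$-graphs with no sources. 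The second stage --- that every nonzero hereditary subalgebra contains a projection equivalent to some $s_v$, which rests on the gauge action, the AF core, and the Aperiodicity Condition --- is exactly the technical content of \cite[Proposition 8.8]{Sim2006}, and deferring it to that reference is no weaker than what the paper itself does; just be aware that as written your argument is an annotated citation rather than a self-contained proof, with the genuinely hard step still outsourced.
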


\begin{prop}\label{simple_purely-inf}
	Consider $\kappa = \kappa(\alpha,\beta)$ for $\alpha,\beta \geq 3$, and let $\Lambda(\kappa)$ be the corresponding $2$-rank graph. Then the corresponding $\cst$-algebra $\cst(\kappa)$ from Definition \ref{graph_algebra} is simple and purely-infinite. 
\end{prop}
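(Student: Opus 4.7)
The plan is to combine the preceding lemma (aperiodicity of $\Lambda(\kappa)$) with Theorems \ref{kumjian-pask_criteria} and \ref{kps}, which reduces the task to verifying two extra properties: cofinality of $\Lambda(\kappa)$, and the existence at every vertex of a cycle with an entrance and a path into it.

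For cofinality, I would appeal to the fact that for $\alpha, \beta \geq 3$ the $1$-skeleton of $\Lambda(\kappa)$ is strongly-connected, as noted in the caption of Figure \ref{fig_2-rank}. Given any vertex $v \in \Lambda^\mathbf{0}$ and any infinite path $\varphi \in \Lambda^\infty$, strong connectivity yields a path in the $1$-skeleton from $v$ to $\varphi(\mathbf{0})$; thanks to the UCE Property (Proposition \ref{tile_complex_uce}) this concatenation of blue and magenta edges lifts to a well-defined morphism $\lambda \in \Lambda$ with $r(\lambda) = v$ and $s(\lambda) = \varphi(\mathbf{0})$, as required by the definition of cofinality. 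To verify strong-connectedness rigorously, I would use the explicit description of $X_A$ and $Y_A$ in the proof of Proposition \ref{tile_complex_uce}: stepping horizontally from a tile changes only the $u$-index freely, and stepping vertically changes only the $v$-index freely, so a bounded sequence of horizontal and vertical moves (together with a $\pi$-rotation once the correct indices are reached) transports any tile to any other when $\alpha, \beta \geq 3$. Combined with aperiodicity, this gives simplicity by Theorem \ref{kumjian-pask_criteria}.

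For pure infiniteness, I would verify the hypothesis of Theorem \ref{kps} at each vertex $A \in \Lambda^\mathbf{0}$ as follows. Pick any $B \in X_A$; since $M_1$ is symmetric (Proposition \ref{tile_complex_uce}(i)) we also have $A \in X_B$, so the concatenation $A \to B \to A$ yields a cycle $\mu \in \Lambda^{(2,0)}$ at $A$ with $d(\mu) = (2,0) \neq \mathbf{0}$. Because $|X_A| = \alpha - 1 \geq 2$, there is a second horizontal neighbour $B' \neq B$ of $A$; the corresponding blue edge (or the analogous magenta edge exploiting $|Y_A| = \beta - 1 \geq 2$) provides an entrance to $\mu$. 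Taking $\lambda$ to be the identity morphism at $v = A$ satisfies $r(\lambda) = v$ and $s(\lambda) = r(\mu) = s(\mu)$, so Theorem \ref{kps} applies.

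The main obstacle will be the careful formalisation of cofinality — specifically, rendering precise the informal "strong-connectedness" mentioned in Figure \ref{fig_2-rank}'s caption, and ensuring that a walk in the $1$-skeleton assembles into a genuine morphism in $\Lambda$. Once this bookkeeping is in place, the remaining ingredients fall out immediately from the structural results already established.
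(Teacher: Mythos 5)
Your proposal is correct and follows essentially the same route as the paper: cofinality from strong-connectedness of the $1$-skeleton together with Theorem \ref{kumjian-pask_criteria} for simplicity, and a degree-$(2,0)$ cycle $A \to B \to A$ with an entrance (using $\alpha,\beta \geq 3$) feeding into Theorem \ref{kps} for pure infiniteness. The only cosmetic differences are that you take $\lambda$ to be the identity morphism at $v$ where the paper invokes strong-connectedness to produce $\lambda$, and that you sketch an explicit justification of strong-connectedness, which the paper simply asserts.
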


\begin{proof}
	Firstly, we observe that $\Lambda(\kappa)$ is cofinal, since the $1$-skeleton of $\Lambda(\kappa)$ is strongly-connected. Hence from Theorem \ref{kumjian-pask_criteria} it follows that $\cst(\kappa)$ is simple. 
	
	Now, let $A \in \Lambda(\kappa)^\mathbf{0}$ be an arbitrary vertex. Since each edge of the $1$-skeleton of $\Lambda(\kappa)$ is bidirectional, we can set $\mu$ to be a path which begins at $A$ and traverses a single blue edge to some vertex $B$, before immediately returning to $A$. Then $d(\mu) = (2,0)$, and since $\alpha, \beta \geq 3$, $B$ is the range of some other blue edge, and so $\mu$ is a cycle with an entrance. Then by strong-connectedness, the conditions of Theorem \ref{kps} are satisfied, and so $\Lambda(\kappa)$ is purely-infinite.
\end{proof}

We make use of the following theorem from \cite{KirPrep}, \cite{Phi2000}. For detail of the definitions, consult e.g. \cite{EffRua2000}, \cite{RorLarLau2000}.

\begin{thm}[Kirchberg-Phillips Classification]\label{kirchberg-phillips}
	Let $\mathcal{A}$ be a separable, nuclear, unital, purely-infinite, simple $\cst$-algebra, which satisfies the Rosenberg-Schochet Universal Coefficient Theorem \cite{RosSch1987}. Then $\mathcal{A}$ is completely determined by its K-theory, up to isomorphism.
\end{thm}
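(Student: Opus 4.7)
The plan is to follow the broad outline of the Kirchberg–Phillips programme, which deduces classification of (unital) Kirchberg algebras satisfying the UCT from three inputs: Kirchberg's $\mathcal{O}_\infty$-absorption theorem, the UCT-based identification of KK-equivalence classes via K-theory, and the lifting of KK-equivalences to genuine $*$-isomorphisms. So suppose $\mathcal{A}$ and $\mathcal{B}$ both satisfy the hypotheses of the theorem and have isomorphic K-theory (together with the correct class of the unit in $K_0$).

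First, I would invoke Kirchberg's absorption theorem: for any separable, nuclear, simple, purely-infinite $\cst$-algebra $\mathcal{A}$, there is an isomorphism $\mathcal{A} \otimes \mathcal{O}_\infty \cong \mathcal{A}$. This reduces the classification problem to the tensorially stable setting, where strong structural tools become available, including the fact that any two unital $*$-homomorphisms $\mathcal{A} \to \mathcal{B}$ that are KK-equivalent are approximately unitarily equivalent, and indeed homotopic. Next, because both $\mathcal{A}$ and $\mathcal{B}$ satisfy the Rosenberg–Schochet UCT, the natural map $KK(\mathcal{A}, \mathcal{B}) \to \Hom(K_\ast(\mathcal{A}), K_\ast(\mathcal{B}))$ sits in a short exact sequence whose $\mathrm{Ext}$-term governs the obstruction to lifting K-theory morphisms; using this, I would lift the given K-theory isomorphism to an invertible element $\alpha \in KK(\mathcal{A}, \mathcal{B})$ (taking care that the class of the unit is preserved).

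The final and decisive step is to realise the abstract KK-equivalence $\alpha$ as an honest $*$-isomorphism $\mathcal{A} \to \mathcal{B}$. Here one exploits that, after $\mathcal{O}_\infty$-absorption, every element of $KK(\mathcal{A}, \mathcal{B})$ is represented by an asymptotic morphism, and in the unital invertible case by an actual unital $*$-homomorphism; Kirchberg's uniqueness and existence theorems then promote this to an isomorphism. Tracking the unit throughout, the resulting isomorphism $\mathcal{A} \to \mathcal{B}$ implements the prescribed K-theory map.

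The main obstacle is clearly this last step: the absorption theorem itself and the existence/uniqueness package used to turn a KK-equivalence into a $*$-isomorphism are the deep technical heart of the Kirchberg–Phillips theory, and essentially all of the work is concentrated there. The KK-theoretic and UCT inputs are comparatively formal once one accepts the classification-by-KK paradigm. In the context of the present paper, it is precisely this theorem that we intend to apply to the $\cst$-algebras $\cst(\kappa)$: Proposition \ref{simple_purely-inf} together with the standard observation that higher-rank graph $\cst$-algebras of row-finite $k$-graphs with no sources are separable, nuclear, unital, and UCT-satisfying places $\cst(\kappa)$ squarely in the class handled by the theorem, so that the K-groups computed in Theorem \ref{pointed} (together with $[\mathbf{1}]$ from Theorem \ref{pointed_identity}) form a complete isomorphism invariant.
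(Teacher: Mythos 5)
This theorem is not proved in the paper at all: it is imported as a black box from Kirchberg \cite{KirPrep} and Phillips \cite{Phi2000}, with the surrounding references given only for the definitions. So there is no internal proof to compare against; what you have written is a reconstruction of the standard proof architecture from those sources, and as an outline it is the right one: $\mathcal{O}_\infty$-absorption for separable nuclear simple purely-infinite algebras, the Rosenberg--Schochet UCT sequence to lift an isomorphism of K-groups to an invertible class in $KK(\mathcal{A},\mathcal{B})$, and Kirchberg's existence and uniqueness theorems (via an approximate-intertwining argument) to realise that KK-equivalence as a unital $\ast$-isomorphism. You are also right that essentially all the depth is concentrated in the absorption and existence/uniqueness results, which you correctly defer rather than pretend to prove. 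One point worth making explicit: as literally stated, ``determined by its K-theory'' is slightly loose --- $K_\ast$ alone classifies these algebras up to stable isomorphism, while unital isomorphism requires the triple $(K_0, [\mathbf{1}], K_1)$; your insistence on tracking the class of the unit through the KK-lift is therefore not a cosmetic precaution but exactly what the paper itself needs, since Corollary \ref{corl_classification} and Theorems \ref{pointed_identity}, \ref{unpointed_identity} record the position of $[\mathbf{1}]$ in $K_0$ precisely for this reason. Your closing remarks on how Proposition \ref{simple_purely-inf} and the separability, nuclearity, unitality and UCT properties of row-finite $k$-graph algebras place $\cst(\kappa)$ within the scope of the theorem match the paper's intended application.
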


In \cite{Eva2008} it is shown that, given a row-finite $k$-rank graph $\Lambda$ with no sources, the $\cst$-algebra $\cst(\Lambda)$ is separable, nuclear, unital, and satisfies the Universal Coefficient Theorem. Furthermore, we have shown in Proposition \ref{simple_purely-inf} that, given a complete bipartite graph $\kappa = \kappa(\alpha, \beta)$ with $\alpha, \beta \geq 3$, the $\cst$-algebra $\cst(\kappa)$ associated to its $2$-rank graph is simple and purely-infinite. Hence we can conclude:

\begin{corl}\label{corl_classification}
	Consider the complete bipartite graph $\kappa = \kappa(\alpha,\beta)$ for $\alpha,\beta \geq 3$, with corresponding $2$-rank graph $\Lambda(\kappa)$. Then the isomorphism class of the associated $\cst$-algebra $\cst(\Lambda(\kappa))$ is completely determined by the K-groups $K_0 (\cst(\kappa)) = K_1 (\cst(\kappa))$ and the position of the class of the identity in $K_0(\cst(\kappa))$.\qed
\end{corl}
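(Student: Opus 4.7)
The plan is to assemble the corollary as an immediate consequence of the Kirchberg--Phillips Classification Theorem (Theorem \ref{kirchberg-phillips}), by verifying that $\cst(\Lambda(\kappa))$ satisfies each of its hypotheses for $\alpha,\beta \geq 3$. The discussion preceding the corollary has essentially done all the work; the proof is therefore a short bookkeeping exercise, and the only real question is how precisely to invoke the unital version of Kirchberg--Phillips, which requires tracking the class $[\mathbf{1}]$ of the identity in $K_0$.

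First I would note that since $\alpha,\beta < \infty$, the $2$-rank graph $\Lambda(\kappa)$ is row-finite with no sources (as already used in Corollary \ref{evans_corl}), so by \cite{Eva2008} the $\cst$-algebra $\cst(\Lambda(\kappa))$ is separable, nuclear, and unital, and satisfies the Rosenberg--Schochet UCT. Next, Proposition \ref{simple_purely-inf} gives simplicity and pure infiniteness. Hence $\cst(\Lambda(\kappa))$ satisfies every hypothesis of Theorem \ref{kirchberg-phillips}, and its isomorphism class is completely determined by its K-theoretic invariants.

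The subtle point is that the classification result of Kirchberg and Phillips, in its unital formulation, says that a pair of algebras in this class are isomorphic if and only if there is an isomorphism of K-groups carrying $[\mathbf{1}]$ to $[\mathbf{1}]$; this is why the position of the class of the identity must be remembered in addition to the abstract groups $K_0, K_1$. I would therefore spell out explicitly that the full invariant is the triple $(K_0(\cst(\kappa)), [\mathbf{1}], K_1(\cst(\kappa)))$, with $K_0 = K_1$ by Theorem \ref{pointed}, and that the order of $[\mathbf{1}]$ is $g = \gcd(\alpha-2,\beta-2)$ by Theorem \ref{pointed_identity}.

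The main obstacle, if there is one at all, is conceptual rather than technical: making sure the reader understands that without the datum $[\mathbf{1}]$ the K-groups alone do not suffice for classification of unital purely-infinite simple algebras, and that Theorem \ref{pointed_identity} is exactly what supplies this missing piece. Once that is clear the proof closes with a single sentence citing Theorem \ref{kirchberg-phillips}, so no computation is needed beyond what has already been done in the previous sections.
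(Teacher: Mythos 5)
Your proposal is correct and follows essentially the same route as the paper: the paper derives the corollary directly from the preceding discussion, citing \cite{Eva2008} for separability, nuclearity, unitality and the UCT, Proposition \ref{simple_purely-inf} for simplicity and pure infiniteness, and then the Kirchberg--Phillips theorem (Theorem \ref{kirchberg-phillips}). Your additional remark that the unital classification requires tracking $[\mathbf{1}]$ in $K_0$, supplied by Theorem \ref{pointed_identity}, is exactly the point the paper intends by including the position of the identity class in the invariant.
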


\section{Unpointed tiles} \label{S_unpointed}

There is an alternative way we could have defined the adjacency matrices above, which will lead to a different $2$-rank graph structure.

Define an \textbf{unpointed tile system} $(G,U,V,\mathcal{S}')$ in the same way as Definition \ref{tile_system}, but replacing $\mathcal{S}=\mathcal{S}(G)$ with the set of unpointed tiles $\mathcal{S}'=\mathcal{S}'(G)$. We will see that analogues of the results in Section \ref{S_cst} also hold for unpointed tile systems.

\begin{defn}\label{adjacency_matrices_unpointed}
	Let $(G, U, V, \mathcal{S}')$ be an unpointed tile system, and let $A', B' \in \mathcal{S}'$ be unpointed tiles, that is, equivalence classes of some respective pointed tiles $A, B \in \mathcal{S}$ (c.f. Definition \ref{def_tiles}). Recall the matrices $M_1,M_2$ from Definition \ref{adjacency_matrices}. We define functions $M'_1,M'_2:\mathcal{S}' \times \mathcal{S}' \rightarrow \lbrace 0,1\rbrace$ as follows:
	\[
	\begin{array}{r}
	M'_1(A',B') = \left\{ \begin{array}{l} 
	1 \quad \text{if $M_1(A_\bullet ,B_\bullet) = 1$ for some $A_\bullet \sim A$, $B_\bullet \sim B$,} \\
	0 \quad \text{otherwise,} 
	\end{array}
	\right.\\
	\text{}\\
	M'_2(A',B') = \left\{ \begin{array}{l} 
	1 \quad \text{if $M_2(A_\bullet ,B_\bullet) = 1$ for some $A_\bullet \sim A$, $B_\bullet \sim B$,} \\
	0 \quad \text{otherwise.}
	\end{array}
	\right.
	\end{array}
	\]
	We define adjacency matrices $M'_1$, $M'_2$ accordingly.
\end{defn}

\begin{prop}\label{tile_complex_uce_unpointed}
	Consider the complete bipartite graph $\kappa = \kappa(\alpha,\beta)$ on $\alpha \geq 2$ white and $\beta \geq 2$ black vertices, and let $(\kappa, U, V, \mathcal{S}'(\kappa))$ be an unpointed tile system. Then the corresponding adjacency matrices $M'_1$ and $M'_2$ commute, and $(\kappa, U, V, \mathcal{S}'(\kappa))$ satisfies the UCE Property.
	
	Hence $(\kappa,U,V,\mathcal{S}'(\kappa))$ has a $2$-rank graph structure.
\end{prop}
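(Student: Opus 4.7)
The plan is to reduce this to Proposition \ref{tile_complex_uce}, which already establishes commutativity and UCE for the pointed case, by analyzing how the equivalence relation $\sim$ interacts with horizontal and vertical adjacency. The main structural fact to establish is that, for any pointed representative $A_\bullet \sim A'$, the set of equivalence classes $[B_\bullet]$ for $B_\bullet$ horizontally adjacent to $A_\bullet$ depends only on $A'$, and similarly for vertical adjacency. Once this is in hand, both claims fall out by direct translation from the pointed case.

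First, I would index the unpointed tiles of $\mathcal{S}'(\kappa)$ by pairs $(i,j)$ with $1 \leq i \leq \alpha$ and $1 \leq j \leq \beta$, writing $A'_{ij}$ for the class of $A_{ij} = [u_i^1, v_j^1, u_i^2, v_j^2]$ (which equals the class of the three symmetric tiles $B_{ij}$, $C_{ij}$, $D_{ij}$). I would then verify by a brief case-check, using the explicit form of the four tiles in each equivalence class, that the pointed horizontal neighbors of each representative of $A'_{ij}$ all lie in $\bigcup_{k \neq i}\{A_{kj},B_{kj},C_{kj},D_{kj}\}$, and conversely, so that
\[
M'_1(A'_{ij},A'_{mn}) = 1 \iff m \neq i \text{ and } n = j,
\]
and similarly $M'_2(A'_{ij},A'_{mn}) = 1$ iff $m = i$ and $n \neq j$. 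This is the crux: it shows that unpointed adjacency has a clean combinatorial description, independent of the choice of representative.

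From these descriptions, commutativity of $M'_1$ and $M'_2$ is immediate: both $(M'_1 M'_2)(A'_{ij},A'_{mn})$ and $(M'_2 M'_1)(A'_{ij},A'_{mn})$ equal $1$ precisely when $m \neq i$ and $n \neq j$, and equal $0$ otherwise. For the UCE Property, given $A'_{ij}$, a horizontal neighbor $A'_{mj}$ ($m \neq i$) and a vertical neighbor $A'_{in}$ ($n \neq j$), the unique candidate $D'$ with $M'_2(A'_{mj},D') = M'_1(A'_{in},D') = 1$ is forced by these constraints to be $A'_{mn}$, and existence is witnessed by lifting to pointed representatives and applying Proposition \ref{tile_complex_uce}.

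The main obstacle is the well-definedness step: one must check that the conditions $y_1 = \bar y_4$ and $x_1 \neq \bar x_3$ defining horizontal pointed adjacency descend coherently to the quotient, despite the fact that the four pointed representatives of a given unpointed tile permute the roles of $x_1,x_2$ and $y_1,y_2$. Once this is confirmed by the case analysis on the four symmetries, the remainder is routine. Finally, the $2$-rank graph structure follows from an application of the same Kumjian-Pask construction used in the proof of Proposition \ref{tile_2rank}, now with vertex set $\mathcal{S}'(\kappa)$ and the matrices $M'_1,M'_2$.
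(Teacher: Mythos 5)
Your proposal is correct and follows essentially the same route as the paper: the paper likewise reduces to the pointed case by showing that the unpointed horizontal neighbours of $A' = \big(u_i^1,v_j^1,u_i^2,v_j^2\big)$ are exactly the classes $\big(u_k^1,v_j^1,u_k^2,v_j^2\big)$ with $k \neq i$ (and analogously for vertical adjacency), noting in passing the same well-definedness point you raise (``some representative'' is equivalent to ``every representative''), and then deduces commutativity, the UCE Property, and the $2$-rank graph structure via the Kumjian--Pask construction exactly as you do.
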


\begin{proof}
	Given two unpointed tiles $A',B' \in \mathcal{S}'(\kappa)$, consider their respective sets of pointed tiles $\mathcal{A},\mathcal{B} \in \mathcal{S}(\kappa)$ as defined in Definition \ref{adjacency_matrices_unpointed}. Notice that $M'_1(A',B') = 1$ if and only if, for \textit{every} $A_\bullet \in \mathcal{A}$, we can find some $B_\bullet \in \mathcal{B}$ such that $M_1 (A_\bullet,B_\bullet) = 1$. The same is true for $M'_2$. Write $A' = \big( u_i^1, v_j^1, u_i^2, v_j^2 \big)$, and define sets
	\[
	X_A := \big\lbrace T \in \mathcal{S}'(\kappa) \mid M'_1(A,T) = 1\big\rbrace, \quad Y_A := \big\lbrace T \in \mathcal{S}'(\kappa) \mid M'_2(A,T) = 1\big\rbrace.
	\]
	Then $X_A$ contains precisely those tiles of the form $\big( u_k^1, v_j^1, u_k^2, v_j^2\big)$, where $k \neq i$, and $Y_A$ only those of the form $\big( u_i^1, v_l^1, u_i^2, v_l^2\big)$, where $l \neq j$. The proof then proceeds in a similar fashion to that of Proposition \ref{tile_complex_uce}, and the $2$-rank graph structure follows immediately from \cite[\S 6]{KumPas2000} as in Theorem \ref{tile_2rank}. 
\end{proof}

We write $\Lambda'(\kappa)$ for the $2$-rank graph induced from the adjacency matrices $M'_1$ and $M'_2$. It is not difficult to verify that $\Lambda'(\kappa)$ is row-finite, with finite vertex set and no sources. Hence we can apply Evans' Theorem \ref{evans}, and we derive the following result:

\begin{thm}[K-groups for unpointed tile systems]\label{unpointed}
	Let $a,b \geq 0$, and let $\kappa (a+2,b+2)$ be the complete bipartite graph on $a+2$ white and $b+2$ black vertices. Again, without loss of generality, we can assume that $a\leq b$. Write $\cst(\kappa) := \cst(\Lambda'(\kappa))$. Then, for $\epsilon = 0,1$:
	\begin{enumerate}[label=(\roman*)]
		\item If $a=b=0$, then $K_\epsilon(\cst(\kappa(a+2,b+2))=K_\epsilon(\cst(\kappa(2,2)) \cong \mathbb{Z}^2$.
		\item If $a=0$ and $b \geq 1$, then
		\[
		K_\epsilon(\cst(\kappa(a+2,b+2))) \cong ( \mathbb{Z}/2)^b\oplus ( \mathbb{Z}/(2b)).
		\]
		\item If $a,b \geq 1$, then 
		\[
		K_\epsilon(\cst(\kappa(a+2,b+2))) \cong ( \mathbb{Z}/2)^{(a+1)(b+1)-1}\oplus ( \mathbb{Z}/2g),
		\]
		where $g := \gcd(a, b)$.
	\end{enumerate}
\end{thm}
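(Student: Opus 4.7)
The plan is to mirror the approach of Theorem \ref{pointed}, but with the much simpler generator set coming from the equivalence classes in $\mathcal{S}'(\kappa)$. Each unpointed tile is determined by the edge $u_iv_j$ of $\kappa$ it covers, so label it $T_{ij}$ for $1 \leq i \leq \alpha$, $1 \leq j \leq \beta$; thus $|\mathcal{S}'(\kappa)| = \alpha\beta$. From the description of the sets $X_A$, $Y_A$ in the proof of Proposition \ref{tile_complex_uce_unpointed}, one reads off $M_1'(T_{ij}, T_{kl}) = 1$ iff $l = j$ and $k \neq i$, and $M_2'(T_{ij}, T_{kl}) = 1$ iff $k = i$ and $l \neq j$. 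Hence the defining relations of $\coker(\mathbf{1} - M_1'^T, \mathbf{1} - M_2'^T)$ take the uniform form $2T_{ij} = J_j = I_i$, where $J_j := \sum_k T_{kj}$ and $I_i := \sum_l T_{il}$.

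The cleanest route is to exploit the Kronecker-product structure. With $A := 2I_\alpha - J_\alpha$ and $B := 2I_\beta - J_\beta$ (where $J$ denotes the all-ones matrix of the appropriate size), ordering basis vectors lexicographically by $(i,j)$ gives $\mathbf{1} - M_1'^T = A \otimes I_\beta$ and $\mathbf{1} - M_2'^T = I_\alpha \otimes B$. Right-exactness of $\otimes_{\mathbb{Z}}$ then yields
\[
\coker\bigl(\mathbf{1}-M_1'^T,\ \mathbf{1}-M_2'^T\bigr) \;\cong\; \coker(A) \otimes_{\mathbb{Z}} \coker(B),
\]
reducing the problem to two one-variable cokernel calculations.

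For $\coker(A)$, introducing $\delta_i := \mathbf{e}_i - \mathbf{e}_1$ for $i \geq 2$ shows that the relations $2\mathbf{e}_i = \sum_k \mathbf{e}_k$ force $2\delta_i = 0$ and $(\alpha-2)\mathbf{e}_1 = -\sum_{i \geq 2} \delta_i$. A single Tietze transformation yields $\coker(A) \cong \mathbb{Z}$ when $\alpha = 2$ and $\coker(A) \cong \mathbb{Z}/(2(\alpha-2)) \oplus (\mathbb{Z}/2)^{\alpha-2}$ when $\alpha \geq 3$, and similarly for $\coker(B)$ with $\beta$ in place of $\alpha$.

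To finish, I would assemble the tensor product using $\mathbb{Z}/m \otimes \mathbb{Z}/n \cong \mathbb{Z}/\gcd(m,n)$, interpreting $\mathbb{Z}/0$ as $\mathbb{Z}$ to cover the degenerate $\alpha = 2$ case. In case (iii), expanding the four cross-terms gives $\mathbb{Z}/(2g) \oplus (\mathbb{Z}/2)^{a + b + ab} = \mathbb{Z}/(2g) \oplus (\mathbb{Z}/2)^{(a+1)(b+1)-1}$ with $g = \gcd(a,b)$, and cases (i)--(ii) appear as the $\alpha = 2$ specialisations. Corollary \ref{evans_corl} (whose row-finiteness and no-sources hypotheses for $\Lambda'(\kappa)$ are straightforward, and were noted in the excerpt) then yields the stated K-groups; in cases (ii), (iii) the summand $\mathbb{Z}^{\rk(\coker)}$ vanishes since the cokernel is torsion, while in case (i) it contributes the extra $\mathbb{Z}$. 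The main obstacle is getting the torsion structure of $\coker(A)$ exactly right for small $\alpha$; the tensor reduction and the final assembly are then essentially formal.
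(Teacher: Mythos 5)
Your proposal is correct, but it takes a genuinely different route from the paper's. The paper works directly with the presentation (\ref{group2}): it introduces the elements $J_{pq}$, $I_{pq}$, shows by explicit manipulation that $2J_{pq}=2I_{pq}=0$ and $2gA_{11}'=0$, and then reduces the presentation by a sequence of Tietze transformations to $\langle J_{2q},J_{p2},J_{pq},A_{11}'\mid 2J_{2q}=2J_{p2}=2J_{pq}=2gA_{11}'=0\rangle$, treating the cases $\alpha=2$ and $\alpha=\beta=2$ by separate adjustments. You instead exploit the Kronecker structure: with the lexicographic ordering one indeed has $\mathbf{1}-(M_1')^T=(2I_\alpha-J_\alpha)\otimes I_\beta$ and $\mathbf{1}-(M_2')^T=I_\alpha\otimes(2I_\beta-J_\beta)$, and right-exactness of $\otimes_{\mathbb{Z}}$ gives $\coker\cong\coker(2I_\alpha-J_\alpha)\otimes_{\mathbb{Z}}\coker(2I_\beta-J_\beta)$. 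I checked the ingredients: the identification of $M_1'$, $M_2'$ agrees with the proof of Proposition \ref{tile_complex_uce_unpointed}; your one-variable computation $\coker(2I_n-J_n)\cong\mathbb{Z}/(2(n-2))\oplus(\mathbb{Z}/2)^{n-2}$ (with the convention $\mathbb{Z}/0=\mathbb{Z}$ for $n=2$) matches the Smith normal form in small cases; the cross-term count $ab+a+b=(a+1)(b+1)-1$ and $\gcd(2a,2b)=2g$ give exactly the stated torsion; and the free rank enters only in case (i), as you say, so the application of Evans' theorem is sound. Comparing the two: your factorisation is more structural, handles the three cases uniformly, and isolates the arithmetic in a single $n\times n$ cokernel, whereas the paper's hands-on argument is more elementary and has the side benefit of exhibiting explicit generators of the cokernel --- in particular the sum $\Sigma$ of all unpointed tiles --- which the paper reuses immediately in Theorem \ref{unpointed_identity} to determine the order of the class of the identity; if you wanted that refinement too, you would need to track where $\Sigma$ lands under your tensor decomposition.
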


\begin{proof}
	Again, we start by proving (iii), as the first two cases follow. Write $\alpha := a+2$, $\beta := b+2$, and let $\alpha, \beta \geq 3$. For $1 \leq i \leq \alpha$, $1 \leq j \leq \beta$, write $A_{ij}'$ for the unpointed tile $\big( u_i^1, v_j^1, u_i^2, v_j^2 \big) \in \mathcal{S}'(\kappa)$. Then
	\begin{multline}\label{group2}
	\coker = \coker\big(\mathbf{1}-(M_1')^T, \mathbf{1}-(M_2')^T \big) = \Bigg\langle A_{ij}' \in \mathcal{S}'(\kappa) \Biggm\vert \\
	A_{ij}' = \sum_{T' \in \mathcal{S}'(\kappa)} M_1'(A_{ij}',T') \cdot T' = \sum_{T' \in \mathcal{S}'(\kappa)} M_2'(A_{ij}',T')\cdot T' \Bigg\rangle .
	\end{multline}
	Fix $p \in \lbrace 1,\ldots , \alpha \rbrace$, $q \in \lbrace 1,\ldots , \beta\rbrace$, and notice that:
	\begin{itemize}
		\item $M'_1(A_{pq}', T') = 1$ if and only if $T' = A_{iq}'$, for some $i \neq p$,
		\item $M'_2(A_{pq}', T') = 1$ if and only if $T' = A_{pj}'$, for some $j \neq q$.
	\end{itemize}
	Hence the relations of (\ref{group2}) are given by $A_{pq}' = \sum_{i \neq p} A_{iq}' = \sum_{j \neq q} A_{pj}'$. Define
	\[
	J_{pq} := \Bigg(\sum_{i=2}^{\alpha} A_{iq}'\Bigg) - A_{pq}' \quad \text{and}\quad I_{pq} := \Bigg(\sum_{j=2}^{\beta} A_{pj}'\Bigg) - A_{pq}',
	\]
	for $p, q \geq 2$. Then
	\begin{align*}
	2J_{pq} 	&= 2\Bigg(\sum_{i=2}^{\alpha} A_{iq}'\Bigg) - 2A_{pq}' \\
	&= 2(A_{2q}' + \cdots + A_{\alpha q}' - A_{pq}') + A_{1q}' - A_{1q}' \\
	&= (A_{1q}' + A_{2q}' + \cdots + A_{\alpha q}' - A_{pq}') + (-A_{1q}' + A_{2q}' + \cdots + A_{\alpha q}') - A_{pq}' \\
	&= A_{pq}' + 0 - A_{pq}' = 0,
	\end{align*}
	and similarly $2I_{pq} = 0$. Now, $J_{pq} = 0$ or $I_{pq} = 0$ only if $A_{pq}' = A_{1q}'$ or $A_{pq}' = A_{p1}'$ respectively. But since $\alpha, \beta \geq 3$, these equivalences are not relations of (\ref{group2}), and so $\ord(J_{pq}) = \ord(I_{pq}) = 2$. Notice that we can write each $A_{1q}'$ and $A_{p1}'$ in terms of the other $A_{ij}'$, for $p,q \geq 2$; hence we can remove these from the list of generators by a sequence of Tietze transformations.
	
	Also notice that we can write $A_{2q}' = J_{2q} - \sum_{i=3}^\alpha A_{iq}'$. Proceeding inductively, we can write each $A_{pq}'$ in terms of the $J_{iq}$ and the $A_{iq}'$ for $i > p$. Similarly, we can express each $A_{pq}'$ in terms of the $I_{pj}$ and the $A_{pj}'$ for $j > q$. Hence we can rewrite the generators of $\coker$ as $A_{11}'$, $I_{pq}$, $J_{pq}$, for $p,q \geq 2$.
	
	But $A_{11}' = -(A_{p1}'+J_{p1}) = -(A_{1q}'+I_{1q})$ for all $p,q \geq 2$, so 
	\[
	(\alpha - 2)A_{11}' = -\sum_{i=3}^{\alpha} (A_{i1}' + J_{i1})
	= -\Bigg( J_{21} + \sum_{i=3}^\alpha J_{i1}\Bigg ),
	\]
	and so $2(\alpha - 2)A_{11}' = 0$. Similarly, we find that $2(\beta - 2)A_{11}' = 0$, and hence that $2gA_{11}' = 0$, where $g:= \gcd(\alpha-2,\beta-2)$.
	
	Observe that, since $I_{pq}$ is defined in terms of the $A_{pj}'$, and each $A_{pj}'$ can be written in terms of the $J_{ij}$, we can remove the $I_{pq}$ from the list of generators of $\coker$. Finally, we can rewrite (\ref{group2}) as
	\[
	\coker = \langle J_{2q}, J_{p2}, J_{pq}, A_{11}' \mid 2J_{2q} = 2J_{p2} = 2J_{pq} = 2gA_{11}' = 0, \text{ for } 3 \leq p \leq \alpha, 3 \leq q \leq \beta \rangle,
	\]
	and after substituting $a = \alpha - 2$, $b = \beta - 2$, this gives a presentation for $(\mathbb{Z}/2)^{(a+1)(b+1)-1}\oplus (\mathbb{Z}/2g)$; since there is no torsion-free part, this proves (iii).
	
	If $\alpha = 2$, then $A_{1q}' = A_{2q}'$ for all $1 \leq q \leq \beta$, so we can write
	\[
	\coker = \Bigg\langle A_{1q}' \Biggm\vert A_{1q}' = \sum_{j\neq q} A_{1j}', \text{ for } 1 \leq q \leq \beta \Bigg\rangle .
	\]
	We adjust the proof above accordingly to obtain the result of (ii). Finally, in case (i) where $\alpha = \beta = 2$, we have $A_{11}' = A_{12}' = A_{21}' = A_{22}'$ with no further relations, such that $\coker = \langle A_{11}' \rangle \cong \mathbb{Z}$, and the result follows from Theorem \ref{evans}.	
\end{proof}

\begin{thm}\label{unpointed_identity}
	Let $\alpha, \beta \geq 3$, let $\kappa = \kappa (\alpha, \beta)$ be the complete bipartite graph on $\alpha$ white and $\beta$ black vertices, and write $g := \gcd(\alpha - 2, \beta - 2)$. Then the order of the class of the identity $[\mathbf{1}]$ in $K_0(\cst(\Lambda'(\kappa)))$ is equal to $g$ if $g$ is odd, and $g/2$ if $g$ is even.
\end{thm}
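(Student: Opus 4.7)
The plan is to mirror the approach used for Theorem \ref{pointed_identity}: invoke the result of \cite{KimRob2002} to identify the order of $[\mathbf{1}]$ in $K_0(\cst(\Lambda'(\kappa)))$ with the order, in $\coker(\mathbf{1}-(M_1')^T, \mathbf{1}-(M_2')^T)$, of the sum $\Sigma' := \sum_{p,q} A'_{pq}$ of all unpointed tiles. The presentation of this cokernel derived in the proof of Theorem \ref{unpointed} then reduces the question to a short computation.

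First I would translate $\Sigma'$ into the presentation. The defining relations
\[
A'_{pq} = \sum_{i \neq p} A'_{iq} = \sum_{j \neq q} A'_{pj}
\]
imply that, for each fixed $q$, $\sum_{i=1}^{\alpha} A'_{iq} = 2A'_{pq}$ independently of $p$, and similarly for each fixed $p$, $\sum_{j=1}^{\beta} A'_{pj} = 2A'_{pq}$ independently of $q$. Applying the first identity, then summing over $q$, and finally applying the second with $p=q=1$:
\[
\Sigma' \;=\; \sum_{q=1}^{\beta} \Bigg( \sum_{i=1}^{\alpha} A'_{iq} \Bigg) \;=\; 2 \sum_{q=1}^{\beta} A'_{1q} \;=\; 4A'_{11}.
\]

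Next I would read off the order of $A'_{11}$ from the presentation established in the proof of Theorem \ref{unpointed}, namely $\ord(A'_{11}) = 2g$ (all other generators $J_{pq}$, $J_{2q}$, $J_{p2}$ are of order $2$ and independent of $A'_{11}$). Since $\Sigma' = 4A'_{11}$ in a cyclic group of order $2g$, we have
\[
\ord(\Sigma') \;=\; \frac{2g}{\gcd(4,\,2g)}.
\]
If $g$ is odd, then $2g \equiv 2 \pmod{4}$, so $\gcd(4,2g) = 2$ and $\ord(\Sigma') = g$. If $g$ is even, then $4 \mid 2g$, so $\gcd(4,2g) = 4$ and $\ord(\Sigma') = g/2$, yielding the claimed dichotomy.

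The substantive work was already done in computing the presentation of $\coker$; there is no essential new obstacle here beyond verifying that \cite{KimRob2002} applies equally well to the unpointed setting (the hypotheses it requires — simplicity and pure infiniteness together with the appropriate KMS/groupoid description — go through for $\Lambda'(\kappa)$ by the same arguments as for $\Lambda(\kappa)$ in Section \ref{S_aperiodicity}), and that the element $A'_{11}$ genuinely has order $2g$ rather than some proper divisor — which is immediate from the explicit presentation and the fact that the $J$-generators contribute only $2$-torsion independent of $A'_{11}$.
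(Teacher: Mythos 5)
Your proposal is correct and follows essentially the same route as the paper: both reduce via \cite{KimRob2002} to the order of $\Sigma' = \sum_{p,q} A'_{pq}$ in $\coker\big(\mathbf{1}-(M_1')^T,\mathbf{1}-(M_2')^T\big)$, derive $\Sigma' = 4A'_{pq}$ from the relations, and read the answer off the presentation obtained in Theorem \ref{unpointed}. If anything, your explicit computation $\ord(\Sigma') = 2g/\gcd(4,2g)$ using $\ord(A'_{11}) = 2g$ pins down the exact order, where the paper only records the divisibility statements $g\Sigma' = 0$ and $(g/2)\Sigma' = 0$ for even $g$ and leaves the lower bound implicit in the presentation.
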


\begin{proof}
	Consider the notation used in the proof of Theorem \ref{unpointed}. 
	As with Theorem \ref{pointed_identity}, we know that the order of $[\mathbf{1}]$ in $K_0(\cst(\kappa))$ is equal to the order of the sum of all tiles $A_{ij}'$. We write $\Sigma$ for this sum.
	
	We have that $A_{pq}' = \sum_{i \neq p} A_{iq}' = \sum_{j \neq q} A_{pj}'$, and so $\Sigma = (\alpha - 1)\Sigma = (\beta - 1)\Sigma$. From this, it follows that $g\Sigma = 0$. We also have $A_{pq}' = \sum_{i \neq p} \sum_{j \neq q} A_{ij}'$, so that
	\[
	\Sigma = A_{pq}' + \sum_{i \neq p} A_{iq}' + \sum_{j \neq q} A_{pj}' + \sum_{i \neq p} \sum_{j \neq q} A_{ij}'
	= 4A_{pq}',
	\]
	for any fixed $p,q$. But $2gA_{pq}' = 0$, and so if $g=2h$ for some integer $h$, then $h\Sigma = 4hA_{pq}' = 0$, and we are done.
\end{proof}

The proof of the following relies on identical results to those in Section \ref{S_aperiodicity}.

\begin{prop}\label{unpointed_classification}
	Consider the complete bipartite graph $\kappa = \kappa(\alpha,\beta)$ for $\alpha,\beta \geq 3$, and the associated $2$-rank graph $\Lambda'(\kappa)$. Then the isomorphism class of the universal $\cst$-algebra $\cst(\Lambda'(\kappa))$ is completely determined by its K-theory and the position of the class of the identity in $K_0(\cst(\Lambda'(\kappa)))$.\qed
\end{prop}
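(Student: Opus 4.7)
The plan is to mirror the chain of results from Section \ref{S_aperiodicity} in the unpointed setting: establish aperiodicity, cofinality, simplicity, and pure-infiniteness for $\Lambda'(\kappa)$, and then invoke the Kirchberg--Phillips Classification Theorem \ref{kirchberg-phillips}. Since $\Lambda'(\kappa)$ has already been shown (Proposition \ref{tile_complex_uce_unpointed}) to be a row-finite $2$-rank graph with no sources and finite vertex set $\mathcal{S}'(\kappa)$, Evans' theorem applies, and the results of \cite{Eva2008} give us separability, nuclearity, unitality, and the Universal Coefficient Theorem for $\cst(\Lambda'(\kappa))$ for free.

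First I would verify the Aperiodicity Condition for $\Lambda'(\kappa)$. The argument from the pointed case transfers almost verbatim: at each vertex $A' \in \mathcal{S}'(\kappa)$ there are exactly $\alpha - 1 \geq 2$ horizontally-adjacent and $\beta - 1 \geq 2$ vertically-adjacent unpointed tiles (as computed in the proof of Proposition \ref{tile_complex_uce_unpointed}). Hence, for any vertex $A'$, I can choose two distinct horizontal neighbours $B'_1, B'_2$ and two distinct vertical neighbours $C'_1, C'_2$, then construct one-dimensional paths $x, y$ using the aperiodic indicator sequence $\{r^2 + r + 1\}_{r \geq 1}$ exactly as in Section \ref{S_aperiodicity}. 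The UCE Property, which holds for the unpointed system by Proposition \ref{tile_complex_uce_unpointed}, then lifts these to a unique aperiodic infinite path $\varphi \in \Lambda'(\kappa)^\infty(A')$.

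Next, I would check cofinality and pure-infiniteness. Cofinality follows because the $1$-skeleton of $\Lambda'(\kappa)$ is strongly-connected: from any vertex one can reach any other along a sequence of blue and magenta edges, since $M_1'$ and $M_2'$ have supports that link all of $\mathcal{S}'(\kappa)$ when $\alpha, \beta \geq 3$. Thus Theorem \ref{kumjian-pask_criteria} gives simplicity. For pure-infiniteness, I would reproduce the argument of Proposition \ref{simple_purely-inf}: every vertex $A'$ lies on a length-two blue cycle $A' \to B' \to A'$ with $d(\mu) = (2,0) \neq \mathbf{0}$, and this cycle has an entrance because $B'$ has at least two blue neighbours when $\alpha \geq 3$. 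Strong-connectedness supplies the path $\lambda$ required by Theorem \ref{kps}. With all hypotheses in place, Theorem \ref{kirchberg-phillips} then classifies $\cst(\Lambda'(\kappa))$ up to isomorphism by its K-theory together with the position of $[\mathbf{1}]$.

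The potential obstacle is purely sanity-checking that the unpointed neighbour counts really do suffice for the aperiodic-path construction and for the entrance on the length-two cycle; in the pointed case each vertex was adjacent to $\beta - 1$ horizontal and $\alpha - 1$ vertical tiles, whereas in the unpointed case these roles swap to $\alpha - 1$ and $\beta - 1$ respectively (as seen in the proof of Proposition \ref{tile_complex_uce_unpointed}). Both are at least $2$ under the hypothesis $\alpha, \beta \geq 3$, so no difficulty arises. Since this is the only place the argument meaningfully differs from Section \ref{S_aperiodicity}, the proof reduces to citing the parallel propositions and theorems, and indeed the author's remark that the proof ``relies on identical results'' is justified.
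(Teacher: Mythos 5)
Your proposal is correct and is exactly the argument the paper intends: the paper's "proof" is precisely the observation that the aperiodicity, cofinality, simplicity, and pure-infiniteness arguments of Section \ref{S_aperiodicity} carry over verbatim to $\Lambda'(\kappa)$ (with the neighbour counts $\alpha-1$, $\beta-1$ still at least $2$), after which Evans' structural results and Theorem \ref{kirchberg-phillips} give the classification. Your sanity check on the swapped roles of the horizontal and vertical neighbour counts is the only point where the unpointed case differs, and you handled it correctly.
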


\section{The homology of a tile complex} \label{S_homology}

\begin{thm}\label{homology}
	Let $\kappa = \kappa (\alpha,\beta)$ be the complete bipartite graph on $\alpha \geq 2$ white and $\beta \geq 2$ black vertices, let $(\kappa, U, V, \mathcal{S}'(\kappa))$ be an unpointed tile system, and let $TC(\kappa)$ be its associated tile complex. Then the homology groups of $TC(\kappa)$ are given by:
	\[
	H_n(TC(\kappa)) \cong \begin{cases}
	0 & \text{ for }n = 0, \\
	\mathbb{Z}^{\alpha + \beta - 2} & \text{ for } n = 1, \\
	\mathbb{Z}^{(\alpha - 1)(\beta - 1)} & \text{ for } n = 2, \\
	0 & \text{ for } n \geq 3.
	\end{cases}
	\]
\end{thm}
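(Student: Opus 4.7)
The plan is to determine the CW structure of $TC(\kappa)$ explicitly and then compute the cellular chain groups and boundary maps.

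First I would count cells via the link condition (Theorem \ref{vdovina}). Each unpointed tile gives one square $2$-cell, so $F=\alpha\beta$. The involutions $u\mapsto\bar u$ on $U$ and $v\mapsto\bar v$ on $V$ reduce $|U|+|V|=4(\alpha+\beta)$ oriented edge labels to $E=2(\alpha+\beta)$ unoriented edges. Because the link at each vertex is $\kappa(\alpha,\beta)$, which has $\alpha\beta$ edges, each vertex absorbs $\alpha\beta$ corner-germs; dividing the total $4\alpha\beta$ corners among them yields exactly $V=4$ vertices. Tracking corner identifications under the four pointed-tile symmetries $A,B,C,D$ together with the edge-gluings then shows that every edge has endpoints consistent across tiles and representatives: labelling the four vertices $v_1,v_2,v_3,v_4$ (the bottom-left, bottom-right, top-right and top-left corner types of $A_{ij}$), one obtains
\[
\partial_1 u_i^1=v_2-v_1,\quad \partial_1 v_j^1=v_3-v_2,\quad \partial_1 u_i^2=v_4-v_3,\quad \partial_1 v_j^2=v_1-v_4,
\]
and, from the anticlockwise traversal of $T_{ij}$, $\partial_2 T_{ij}=u_i^1+v_j^1+u_i^2+v_j^2$ with all signs positive (the relation $\partial_1\partial_2=0$ is then immediate).

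For $H_2=\ker\partial_2$: writing $\xi=\sum_{i,j}c_{ij}T_{ij}$, the coefficient of each $u_i^r$ in $\partial_2\xi$ is the row-sum $\sum_j c_{ij}$ and the coefficient of each $v_j^r$ is the column-sum $\sum_i c_{ij}$. Hence $\xi\in\ker\partial_2$ iff all row and column sums of the matrix $(c_{ij})$ vanish; this is a subgroup of the free abelian group $\mathbb{Z}^{\alpha\beta}$, automatically torsion-free, of rank $\alpha\beta-(\alpha+\beta-1)=(\alpha-1)(\beta-1)$.

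For $H_1=\ker\partial_1/\mathrm{im}\,\partial_2$: letting $a_i,b_i,c_j,d_j$ denote the coefficients of $u_i^1,u_i^2,v_j^1,v_j^2$ respectively, $\ker\partial_1$ is cut out by $\sum_i a_i=\sum_i b_i=\sum_j c_j=\sum_j d_j$, giving rank $2(\alpha+\beta)-3$. The image of $\partial_2$ is the subgroup with $a_i=b_i$ and $c_j=d_j$ for every $i,j$ together with $\sum_i a_i=\sum_j c_j$, of rank $\alpha+\beta-1$. These defining constraints are preserved under division by positive integers, so $\mathrm{im}\,\partial_2$ sits primitively in $\ker\partial_1$ and the quotient is torsion-free of rank $\alpha+\beta-2$. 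Finally, $\mathrm{im}\,\partial_1$ is the augmentation kernel of $C_0$, so $H_0\cong\mathbb{Z}$ (reduced homology vanishes), and $H_n=0$ for $n\geq 3$ since $TC(\kappa)$ is $2$-dimensional.

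The main obstacle is the opening step, namely justifying $V=4$ and the explicit adjacency structure of the $1$-skeleton. I would handle this through the link-size argument above: any coincidence among $v_1,\ldots,v_4$ would inflate the number of edge-germs at the resulting vertex beyond $\alpha+\beta$, contradicting Theorem \ref{vdovina}. Once the CW data is pinned down, the chain-complex computation is routine.
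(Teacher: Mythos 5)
Your computation is correct, but it takes a different route from the paper. The paper does not work with the four-vertex CW structure of $TC(\kappa)$ directly: following the strategy of its reference, it contracts the single tile $\big(u_1^1,v_1^1,u_1^2,v_1^2\big)$ (a contractible subcomplex), obtaining a homotopy-equivalent one-vertex complex $TC_1(\kappa)$ whose $2$-cells are $(\alpha-1)(\beta-1)$ squares plus $(\alpha-1)+(\beta-1)$ bigons $\big(u_i^1,u_i^2\big)$, $\big(v_j^1,v_j^2\big)$; there $\partial_1=0$, so $H_2=\ker\partial_2$ and $H_1=C_1/\im\partial_2$ are read off immediately from the boundary formulas, with no need to analyse $\partial_1$ or worry about saturation. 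You instead keep all four vertices, justify their number by the link count from Theorem \ref{vdovina} (a point the paper simply asserts ``by construction''), and then do the integral linear algebra by hand; the price is that you must identify $\ker\partial_1$, $\im\partial_2$, and prove $H_1$ torsion-free. Your row-/column-sum description of $\ker\partial_2$ and your rank counts are all correct, and your primitivity argument for $H_1$ works — but note that it hinges on the asserted \emph{equality} $\im\partial_2=\lbrace a_i=b_i,\ c_j=d_j,\ \sum_i a_i=\sum_j c_j\rbrace$, of which only the inclusion $\subseteq$ is immediate; the reverse inclusion needs the (easy) observation that any integer vectors $(m_i)$, $(n_j)$ with $\sum_i m_i=\sum_j n_j$ arise as the row and column sums of an integer matrix $(c_{ij})$, e.g.\ by putting $c_{i1}=m_i$ for $i\geq 2$, $c_{1j}=n_j$ for $j\geq 2$ and adjusting $c_{11}$. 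Add that one line and the argument is complete; your reading of the $n=0$ case as reduced homology agrees with the paper's own treatment.
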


\begin{proof}
	As $TC(\kappa)$ is a path-connected, $2$-dimensional CW-complex by construction, clearly $H_n(TC(\kappa)) \cong 0$ for $n=0$ and $n \geq 3$. 
	
	The proof uses as its basis that of \cite[Proposition 3]{NorThoVdo2018}. The boundary of each square in $TC(\kappa)$ is given by an element of $\mathcal{S}'(\kappa)$; write these elements as $\big(u_i^1,v_j^1,u_i^2,v_j^2\big)$. By construction, $TC(\kappa)$ has four vertices: each one the origin of all directed edges labelled $u_i^1$, $v_j^1$, $u_i^2$, and $v_j^2$ respectively. Each tile is homotopy equivalent to a point; pick tile $\big(u_1^1,v_1^1,u_1^2,v_1^2\big)$ and contract it, thereby identifying the four vertices. Call the resulting tile complex $TC_1(\kappa)$.
	
	This is a $2$-dimensional CW-complex whose edges are loops, and whose $2$-cells comprise:
	\begin{itemize}
		\item $(\alpha-1)(\beta-1)$-many unpointed tiles $A_{ij}'=\big(u_i^1,v_j^1,u_i^2,v_j^2\big)$,
		\item $(\alpha-1)$-many $2$-gons $X_i'$ with boundaries described analogously by $\big(u_i^1,u_i^2\big)$,
		\item $(\beta-1)$-many $2$-gons $Y_j'$ with boundaries described by $\big(v_j^1,v_j^2\big)$,
	\end{itemize}
	for $2 \leq i \leq \alpha$, $2 \leq j \leq \beta$. Consider the chain complex associated to $TC_1(\kappa)$:
	\[
	\cdots \longrightarrow C_3 \overset{\partial_3}{\longrightarrow} C_2 \overset{\partial_2}{\longrightarrow} C_1 \overset{\partial_1}{\longrightarrow} C_0 \overset{\partial_0}{\longrightarrow} 0.
	\]
	Since $TC_1(\kappa)$ is $2$-dimensional and has one vertex, this boils down to
	\[
	0 \overset{0}{\longrightarrow} C_2 \overset{\partial_2}{\longrightarrow} C_1 \overset{0}{\longrightarrow} 0,
	\]
	and so $H_1(TC_1(\kappa)) \cong C_1 / \im(\partial_2)$, and $H_2(TC_1(\kappa)) \cong \ker(\partial_2)$. We have $\partial_2(A_{ij}') = u_i^1 + v_j^1 + u_i^2 + v_j^2$, $\partial_2(X_i') = u_i^1 + u_i^2$, and $\partial_2(Y_j') = v_j^1 + v_j^2$. Clearly $\ker(\partial_2)$ is generated by $\lbrace A_{ij}' - X_i' - Y_j' \mid 2 \leq i \leq \alpha, 2 \leq j \leq \beta \rbrace$, such that $\ker(\partial_2) \cong \mathbb{Z}^{(\alpha - 1 )( \beta - 1)}$.
	
	Similarly, we have an Abelian group presentation for $H_1(TC_1(\kappa))$ as follows:
		\begin{multline*}
		H_1(TC_1(\kappa)) \cong \big\langle u_i^1,v_j^1,u_i^2,v_j^2 \bigm\vert u_i^1 + v_j^1 + u_i^2 + v_j^2 = u_i^1 + u_i^2 = v_j^1 + v_j^2 = 0,\\
		\text{for } 2 \leq i \leq \alpha, 2 \leq j \leq \beta \big\rangle,
		\end{multline*}
	which, after substituting $u_i^2 = -u_i^1$ and $v_j^2 = -v_j^1$, gives 
	\[
	H_1(TC_1(\kappa)) \cong \big\langle u_i^1,v_j^1, \text{ for } 2 \leq i \leq \alpha, 2 \leq j \leq \beta \big\rangle.
	\]
	This is a presentation for $\mathbb{Z}^{\alpha + \beta - 2}$, and since $TC_1(\kappa)$ is homotopy equivalent to $TC(\kappa)$, we are done.
\end{proof}

\section{Pointed and unpointed $2t$-gon systems}\label{S_polygons}

In this section we suggest generalisations of the methods above for constructing $\cst$-algebras associated to $2t$-gon systems, both for even and arbitrary $t \geq 1$.

When $t=2$, we have an innate idea of what it means for two $2t$-gons to be `stackable:' functions we called horizontal and vertical adjacency in Definition \ref{adjacency_matrices}. We extend this notion to all even $t \geq 2$ in as natural a way possible.

Definition \ref{pointed_polygons} directly generalises the definitions at the beginning of Section \ref{S_tile}.

\begin{defn}\label{pointed_polygons}
	Let $G$ be a connected bipartite graph on $\alpha$ white and $\beta$ black vertices. Let $U$, $V$ be sets with $|U| = 2t\alpha$, $|V| = 2t\beta$, and which are gifted with fixed-point-free involutions $u \mapsto \bar{u}$, $v \mapsto \bar{v}$ respectively. Construct the $2t$-polyhedron $P(G)$ from Theorem \ref{vdovina}, which has $G$ as its link at each vertex, using $U$ and $V$, and write $\mathcal{S}'(G) := \lbrace A_e \mid e \in E(G)\rbrace$ for the set of $2t$-gons which comprise $P(G)$. We call elements of $\mathcal{S}_t'(G)$ \textbf{unpointed $2t$-gons}, and denote them by $A_e = ( x_1,y_1,\ldots ,x_t,y_t)$. 
	
	Analogously to Section \ref{S_tile}, we write $[ x_1,y_1,\ldots , x_t,y_t ]$ for a \textbf{pointed $2t$-gon}, that is, a $2t$-gon labelled anticlockwise and starting from a distinguished basepoint by the sequence $x_1, y_1, \ldots , x_t, y_t$, for some $x_i \in U$, $y_i \in V$. Write $\mathcal{S}_t=\mathcal{S}_t(G)$ for the set of $2t\alpha\beta$ pointed $2t$-gons. We call the tuple $(G,U,V,\mathcal{S}_t)$ a $2t$\textbf{-gon system}. Similarly we call a tuple $(G,U,V,\mathcal{S}_t')$ an \textbf{unpointed $2t$-gon system}.
\end{defn}

Consider the adjacency matrices $M_1$, $M_2$ from Definition \ref{adjacency_matrices}. We can view two pointed tiles ($4$-gons) $A=[x_1,y_1,x_2,y_2]$ and $B$ as being horizontally-adjacent, that is, $M_1(A,B)=1$) if and only if, after reflecting $A$ through an axis connecting the midpoints of $x_1$ and $x_2$, and then replacing $x_1$, $x_2$ by some $x_1' \neq x_1$, $x_2' \neq x_2$ respectively, we can obtain $B$. Likewise, if and only if we can obtain $B$ by reflecting $A$ through an axis joining the midpoints of the $y$ edges, and then changing the labels of those edges, do we say that $A$ and $B$ are vertically-adjacent.

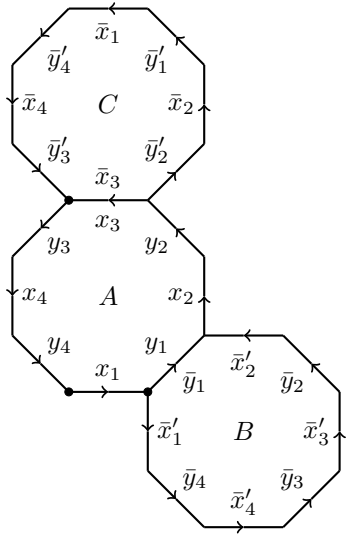
\begin{figure}[h]
	\begin{center}
		\begin{tikzpicture}
		\begin{scope}[scale=0.75]
		
		\draw[->,thick] (0,0) -- (0.7,0);
		\draw[thick] (0.7,0) -- (1.4,0);
		\draw[->,thick] (1.4,0) -- (1.9,0.5);
		\draw[thick] (1.9,0.5) -- (2.4,1);
		\draw[->,thick] (2.4,1) -- (2.4,1.7);
		\draw[thick] (2.4,1.7) -- (2.4,2.4);
		\draw[->,thick] (2.4,2.4) -- (1.9,2.9);
		\draw[thick] (1.9,2.9) -- (1.4,3.4);
		\draw[->,thick] (1.4,3.4) -- (0.7,3.4);
		\draw[thick] (0.7,3.4) -- (0,3.4);
		\draw[->,thick] (0,3.4) -- (-0.5,2.9);
		\draw[thick] (-0.5,2.9) -- (-1,2.4);
		\draw[->,thick] (-1,2.4) -- (-1,1.7);
		\draw[thick] (-1,1.7) -- (-1,1);
		\draw[->,thick] (-1,1) -- (-0.5,0.5);
		\draw[thick] (-0.5,0.5) -- (0,0);
		\filldraw (0,0) circle (2pt);
		
		\begin{scope}[xshift=2.4cm,yshift=-2.4cm]
		\draw[->,thick] (0,0) -- (0.7,0);
		\draw[thick] (0.7,0) -- (1.4,0);
		\draw[->,thick] (1.4,0) -- (1.9,0.5);
		\draw[thick] (1.9,0.5) -- (2.4,1);
		\draw[->,thick] (2.4,1) -- (2.4,1.7);
		\draw[thick] (2.4,1.7) -- (2.4,2.4);
		\draw[->,thick] (2.4,2.4) -- (1.9,2.9);
		\draw[thick] (1.9,2.9) -- (1.4,3.4);
		\draw[->,thick] (1.4,3.4) -- (0.7,3.4);
		\draw[thick] (0.7,3.4) -- (0,3.4);
		
		\draw[->,thick] (-1,2.4) -- (-1,1.7);
		\draw[thick] (-1,1.7) -- (-1,1);
		\draw[->,thick] (-1,1) -- (-0.5,0.5);
		\draw[thick] (-0.5,0.5) -- (0,0);
		\filldraw (-1,2.4) circle (2pt);
		
		\draw[] (0.7,1.7) node{$B$};
		\draw[anchor=south,shift={(0,0)}] (0.7,0.05) node{$\bar{x}_4'$};
		\draw[anchor=south east,shift={(0,0)}] (1.95,0.5) node{$\bar{y}_3$};
		\draw[anchor=east,shift={(0,0)}] (2.4,1.7) node{$\bar{x}_3'$};
		\draw[anchor=north east,shift={(0,0)}] (1.95,2.9) node{$\bar{y}_2$};
		\draw[anchor=north,shift={(0,0)}] (0.7,3.3) node{$\bar{x}_2'$};
		\draw[anchor=north west,shift={(0,0)}] (-0.55,2.9) node{$\bar{y}_1$};
		\draw[anchor=west,shift={(0,0)}] (-1,1.7) node{$\bar{x}_1'$};
		\draw[anchor=south west,shift={(0,0)}] (-0.55,0.5) node{$\bar{y}_4$};
		\end{scope}
		
		\begin{scope}[yshift=3.4cm]
		\draw[->,thick] (1.4,0) -- (1.9,0.5);
		\draw[thick] (1.9,0.5) -- (2.4,1);
		\draw[->,thick] (2.4,1) -- (2.4,1.7);
		\draw[thick] (2.4,1.7) -- (2.4,2.4);
		\draw[->,thick] (2.4,2.4) -- (1.9,2.9);
		\draw[thick] (1.9,2.9) -- (1.4,3.4);
		\draw[->,thick] (1.4,3.4) -- (0.7,3.4);
		\draw[thick] (0.7,3.4) -- (0,3.4);
		\draw[->,thick] (0,3.4) -- (-0.5,2.9);
		\draw[thick] (-0.5,2.9) -- (-1,2.4);
		\draw[->,thick] (-1,2.4) -- (-1,1.7);
		\draw[thick] (-1,1.7) -- (-1,1);
		\draw[->,thick] (-1,1) -- (-0.5,0.5);
		\draw[thick] (-0.5,0.5) -- (0,0);
		\filldraw (0,0) circle (2pt);
		
		\draw[] (0.7,1.7) node{$C$};
		\draw[anchor=south,shift={(0,0)}] (0.7,0.05) node{$\bar{x}_3$};
		\draw[anchor=south east,shift={(0,0)}] (1.95,0.5) node{$\bar{y}_2'$};
		\draw[anchor=east,shift={(0,0)}] (2.4,1.7) node{$\bar{x}_2$};
		\draw[anchor=north east,shift={(0,0)}] (1.95,2.9) node{$\bar{y}_1'$};
		\draw[anchor=north,shift={(0,0)}] (0.7,3.3) node{$\bar{x}_1$};
		\draw[anchor=north west,shift={(0,0)}] (-0.55,2.9) node{$\bar{y}_4'$};
		\draw[anchor=west,shift={(0,0)}] (-1,1.7) node{$\bar{x}_4$};
		\draw[anchor=south west,shift={(0,0)}] (-0.55,0.5) node{$\bar{y}_3'$};
		\end{scope}
		
		\draw[] (0.7,1.7) node{$A$};
		
		\draw[anchor=south,shift={(0,0)}] (0.7,0.05) node{$x_1$};
		\draw[anchor=south east,shift={(0,0)}] (1.95,0.5) node{$y_1$};
		\draw[anchor=east,shift={(0,0)}] (2.4,1.7) node{$x_2$};
		\draw[anchor=north east,shift={(0,0)}] (1.95,2.9) node{$y_2$};
		\draw[anchor=north,shift={(0,0)}] (0.7,3.3) node{$x_3$};
		\draw[anchor=north west,shift={(0,0)}] (-0.55,2.9) node{$y_3$};
		\draw[anchor=west,shift={(0,0)}] (-1,1.7) node{$x_4$};
		\draw[anchor=south west,shift={(0,0)}] (-0.55,0.5) node{$y_4$};
		\end{scope}
		\end{tikzpicture}
	\end{center}
	\caption{$U$-, $V$-adjacency: Consider the pointed octagons $A = [ x_1,y_1,\ldots ,x_4,y_4]$, $B = [ \bar{x}_1', \bar{y}_4, \ldots,\bar{x}_2', \bar{y}_1]$, and $C = [\bar{x}_3, \bar{y}_2', \ldots ,\bar{x}_4, \bar{y}_3']$ in $\mathcal{S}_4$. We say that $A$ and $B$ are $V$-adjacent, and $A$ and $C$ are $U$-adjacent. There is a unique octagon $D = [x_3',y_3', \ldots , x_2', y_2']$ which is both $U$-adjacent to $B$ and $V$-adjacent to $C$.}\label{fig_UV-adjacency}
\end{figure}

\begin{defn}\label{UV-adjacency}
	Let $t$ be an even integer, let $(G,U,V,\mathcal{S}_t)$ be a $2t$-gon system, and let $A = [ x_1,y_1,\ldots ,x_t,y_t] \in \mathcal{S}_t$ be a pointed $2t$-gon.
	
	Reflect $A$ through an axis joining the midpoints of sides labelled $x_1$ and $x_{(t/2)+1}$ to obtain a new pointed $2t$-gon $[ \bar{x}_1, \bar{y}_t, \bar{x}_t, \bar{y}_{t-1}, \ldots , \bar{x}_2, \bar{y}_1]$. We say that a pointed $2t$-gon $B \in \mathcal{S}_t$ is $V$\textbf{-adjacent} to $A$ if $B = \big[ \bar{x}_1', \bar{y}_t, \bar{x}_t', \bar{y}_{t-1}, \ldots , \bar{x}_2', \bar{y}_1 \big]$, for some $x_i' \neq x_i$.
	
	Similarly, reflect $A$ such that $x_1 \mapsto \bar{x}_{(t/2)+1}$; we obtain a new pointed $2t$-gon
		\begin{equation}\label{U-adj}
		\big[ \bar{x}_{(t/2)+1}, \bar{y}_{t/2}, \bar{x}_{t/2}, \ldots , \bar{y}_1, \bar{x}_1, \bar{y}_t, \bar{x}_t, \ldots , \bar{x}_{(t/2)+2}, \bar{y}_{(t/2)+1} \big].
		\end{equation}
	We say that a pointed $2t$-gon $B \in \mathcal{S}_t$ is $U$\textbf{-adjacent} to $A$ if $B$ is of the form (\ref{U-adj}), but with all elements $y_i$ replaced with some $y_i' \neq y_i$ (Figure \ref{fig_UV-adjacency}).
	
	We define the $U$\textbf{-} and $V$\textbf{-adjacency matrices}, $M_U$ and $M_V$ respectively, to be the $2t\alpha\beta \times 2t\alpha\beta$ matrices with $AB$-th entry $1$ if $A$ and $B$ are $U$-adjacent (resp. $V$-adjacent), and $0$ otherwise.
\end{defn}

\begin{prop}\label{polygon_uce}
	Let $t$ be even, and $(\kappa, U, V, \mathcal{S}_t(\kappa))$ be a $2t$-gon system with adjacency matrices $M_U$, $M_V$. Then these matrices commute, and $(\kappa, U, V, \mathcal{S}_t(\kappa))$ satisfies the UCE Property.
	
	Hence $(\kappa,U,V,\mathcal{S}_t(\kappa))$ has a $2$-rank graph structure.
\end{prop}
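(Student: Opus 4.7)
The plan is to follow the proof of Proposition \ref{tile_complex_uce} \emph{mutatis mutandis}, with the notational generalisation required for arbitrary even $t$. First I would fix a generic pointed $2t$-gon $A = [x_1, y_1, \ldots, x_t, y_t] \in \mathcal{S}_t(\kappa)$, noting that any other pointed $2t$-gon in $\mathcal{S}_t(\kappa)$ may be handled identically by symmetry. I would then use Definition \ref{UV-adjacency} to describe explicitly the sets $X_A := \lbrace B \in \mathcal{S}_t(\kappa) \mid M_V(A,B) = 1 \rbrace$ and $Y_A := \lbrace C \in \mathcal{S}_t(\kappa) \mid M_U(A,C) = 1 \rbrace$: elements of $X_A$ are parametrised by choices $x_i' \neq x_i$ for $i = 1, \ldots, t$, and elements of $Y_A$ by choices $y_j' \neq y_j$ for $j = 1, \ldots, t$. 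Since $\alpha, \beta \geq 2$ (the relevant case, by analogy with the tile situation), both sets are non-empty.

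The crux is then to compute
\[
(YX)_A := \bigcup_{B \in X_A} Y_B \quad \text{and} \quad (XY)_A := \bigcup_{C \in Y_A} X_C,
\]
and verify that they coincide. Conceptually, the $U$- and $V$-reflections of Definition \ref{UV-adjacency} are two commuting involutions of the labelled $2t$-gon whose composition is the $\pi$-rotation about the centroid; together they generate a Klein four-group of symmetries. Consequently, applying either ``$V$ then $U$'' or ``$U$ then $V$'' to $A$ yields a pointed $2t$-gon whose labelling is the central reflection of $A$'s, with every $x$-label replaced by some $x_i' \neq x_i$ and every $y$-label replaced by some $y_j' \neq y_j$, independently of the intermediate tile chosen. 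This simultaneously gives $M_U M_V = M_V M_U$ and the UCE Property: given $B \in X_A$ (which fixes the parameters $x_i'$) and $C \in Y_A$ (which fixes the parameters $y_j'$), the unique $D \in \mathcal{S}_t(\kappa)$ with $M_U(B,D) = M_V(C,D) = 1$ is precisely the element of $(YX)_A = (XY)_A$ determined by these two sets of free parameters.

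The $2$-rank graph structure then follows immediately from the commutativity of $M_U, M_V$ and the fact that $\Lambda(\kappa)$ is row-finite with no sources (each row and column of each matrix has a nonzero entry, as in Proposition \ref{tile_complex_uce}), via the construction of \cite[\S 6]{KumPas2000}, exactly as in the proof of Proposition \ref{tile_2rank}; there is nothing new to verify here.

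The main obstacle is purely combinatorial bookkeeping: one must check index-by-index that the reindexing of labels in (\ref{U-adj}), which cyclically shifts by $t/2$ positions and bars every label, interacts correctly with the $V$-reflection (which reverses the $y$-sequence and primes each $x$), so that the two compositions really do yield the same labelled $2t$-gon. The conceptual content is transparent once one sees the Klein four-group acting by centroid-preserving symmetries, but writing out the explicit formulas for general even $t$ to confirm this is where care is required.
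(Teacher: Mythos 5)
Your proposal is correct and follows essentially the same route as the paper: fix a generic pointed $2t$-gon, describe its $V$- and $U$-adjacent $2t$-gons explicitly via Definition \ref{UV-adjacency}, observe that composing the two (perpendicular-axis) reflections in either order yields the same $\pi$-rotated $2t$-gon with the $x$-labels and $y$-labels independently re-chosen, so that the common extension $D$ exists and is unique, and then invoke \cite[\S 6]{KumPas2000} for the $2$-rank graph structure. Your Klein four-group observation is a conceptual repackaging of the explicit index computation the paper carries out, and the only care needed is the bookkeeping you already flag (in $\mathcal{S}_t(\kappa)$ the new $x$-labels are determined by a single choice of white vertex $k \neq i$, and the new $y$-labels by a single black vertex $l \neq j$, exactly as in the paper's formulas).
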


\begin{proof}
	Consider the pointed $2t$-gon $A = \big[ u_i^1,v_j^1,\ldots ,u_i^t,v_j^t\big] \in \mathcal{S}_t(\kappa)$; those $2t$-gons corresponding to its reflections and rotations are treated similarly. Then a pointed $2t$-gon $B$ is $V$-adjacent to $A$ if and only if $B = \big[ \bar{u}_k^1, \bar{v}_j^t, \ldots , \bar{u}_k^2, \bar{v}_j^1 \big]$, for some $k \neq i$. Suppose $B$ is such a $2t$-gon $V$-adjacent to $A$; then a pointed $2t$-gon $D$ is $U$-adjacent to $B$ if and only if
		\begin{equation}\label{D}
		D = \Big[ u_k^{(t/2)+1}, v_l^{(t/2)+1}, \ldots , u_k^t, v_l^t, u_k^1, v_k^1, \ldots , u_k^{t/2}, v_l^{t/2} \Big],
		\end{equation}
	for some $l \neq j$. Likewise, $C$ is $U$-adjacent to $A$ if and only if 
		\[
		C = \Big[ \bar{u}_i^{(t/2)+1}, \bar{v}_l^{t/2}, \ldots , \bar{u}_i^1, \bar{v}_l^t, \ldots, \bar{u}_i^{(t/2)+2}, \bar{v}_l^{(t/2)+1} \Big],
		\]
	for some $l \neq j$. Clearly if $C$ is such a $2t$-gon, then $D$ is $V$-adjacent to $C$ if and only if it is of the form (\ref{D}). Exactly one such $D$ exists in $\mathcal{S}_t(\kappa)$, hence $M_U$ and $M_V$ commute. Then $(\kappa, U, V, \mathcal{S}_t(\kappa))$ has the UCE Property, and the $2$-rank graph structure follows from \cite[\S 6]{KumPas2000}.
\end{proof}

Recall the $2$-rank graph $\Lambda(\kappa)$ induced from a tile system and its adjacency matrices $M_1$, $M_2$ in Section \ref{S_cst}, and recall its associated universal $\cst$-algebra $\cst(\Lambda)$ from Definition \ref{graph_algebra}. Similarly, we write $\Lambda_t(\kappa)$ for the $2$-rank graph induced from the $U$- and $V$-adjacency matrices $M_U$ and $M_V$, and observe that $\Lambda_t(\kappa)$ is row-finite, with finite vertex set and no sources. Hence from Evans' Theorem \ref{evans}, we can deduce:

\begin{thm}[K-groups for pointed $2t$-gon systems, $t$ even] \label{pointed_polygon}
	Let $\alpha, \beta \geq 2$, let $t \geq 2$ be even, and let $\kappa = \kappa (\alpha , \beta)$ be the complete bipartite graph on $\alpha$ white and $\beta$ black vertices. Then
		\[
		K_\epsilon(\cst(\Lambda_t(\kappa))) \cong (K_\epsilon(\cst(\Lambda(\kappa))))^{t/2},
		\]
	for $\epsilon = 0,1$.
\end{thm}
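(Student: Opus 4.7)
The plan is to apply Evans' Theorem \ref{evans} by showing that $M_U$ and $M_V$ are simultaneously block-diagonal, consisting of $t/2$ identical blocks each matching the pair $(M_1, M_2)$ from the tile system $\Lambda(\kappa)$. Once this decomposition is established, both $\coker(\mathbf{1} - M_U^T, \mathbf{1} - M_V^T)$ and its untransposed counterpart split as $t/2$-fold direct sums of the corresponding tile cokernels, and Evans' formulas convert this directly into the desired isomorphism of K-groups.

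To set up the decomposition, I would parameterise the $2t\alpha\beta$ pointed $2t$-gons of $\mathcal{S}_t(\kappa)$ by tuples $(p, q, r, s)$ with $1 \leq p \leq \alpha$, $1 \leq q \leq \beta$, $r \in \mathbb{Z}/t$, and $s \in \mathbb{Z}/2$: here $(p,q)$ indexes the underlying edge $u_p v_q \in E(\kappa)$, while $r$ and $s$ are rotational and reflective indices on the corresponding $2t$-gon. Explicitly, I set
\[
A_{pq}^{r,0} := \bigl[ u_p^r, v_q^r, u_p^{r+1}, v_q^{r+1}, \ldots , u_p^{r-1}, v_q^{r-1} \bigr]
\]
(indices modulo $t$), and take $A_{pq}^{r,1}$ to be the reflection of $A_{pq}^{r,0}$ through the axis joining the midpoints of the $U$-edges at positions $1$ and $(t/2)+1$.

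The key combinatorial step is to read off from Definition \ref{UV-adjacency} the rules
\begin{align*}
M_V\bigl(A_{pq}^{r,s}, A_{p'q'}^{r',s'}\bigr) = 1 \quad &\Longleftrightarrow \quad p' \neq p,\ q' = q,\ r' = r,\ s' = 1-s, \\
M_U\bigl(A_{pq}^{r,s}, A_{p'q'}^{r',s'}\bigr) = 1 \quad &\Longleftrightarrow \quad p' = p,\ q' \neq q,\ r' = r + t/2,\ s' = 1-s.
\end{align*}
From these, the orbit of any sector $(r,s)$ under the action generated by $M_U$ and $M_V$ is $\bigl\{ (r,0), (r,1), (r+t/2,0), (r+t/2,1) \bigr\}$, of size four (using $r - t/2 \equiv r + t/2 \pmod t$ since $t$ is even). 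The $2t$ sectors therefore partition into exactly $t/2$ orbits indexed by $r \in \{0, 1, \ldots, t/2 - 1\}$, and $M_U, M_V$ are simultaneously block-diagonal with $t/2$ identical $4\alpha\beta \times 4\alpha\beta$ blocks.

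Within each four-sector orbit, relabelling the sectors $(r,0), (r,1), (r+t/2,1), (r+t/2,0)$ respectively as $A_{pq}, B_{pq}, C_{pq}, D_{pq}$ in analogy with the pointed tiles in the proof of Theorem \ref{pointed} shows that each block matches $M_1$ and $M_2$ of $\Lambda(\kappa)$ exactly, whence Evans' Theorem \ref{evans} delivers $K_\epsilon(\cst(\Lambda_t(\kappa))) \cong (K_\epsilon(\cst(\Lambda(\kappa))))^{t/2}$. The main obstacle is the bookkeeping in the second step: extracting the clean rules above, and in particular the $r \mapsto r + t/2$ shift under $U$-adjacency, requires carefully tracking the alternating barred and unbarred indices in formula (\ref{U-adj}), where the reflection axis swaps the starting $U$-edge with its antipode. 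Once these rules are in hand, the orbit analysis and the identification with the tile block are immediate.
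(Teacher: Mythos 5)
Your proposal is correct and follows essentially the same route as the paper: the paper partitions $\mathcal{S}_t(\kappa)$ into $t/2$ families $(A_r)_{ij},(B_r)_{ij},(C_r)_{ij},(D_r)_{ij}$, $1\leq r\leq t/2$, which is exactly your orbit decomposition of the sectors $(r,s)$ and $(r+t/2,1-s)$, and then observes that the resulting cokernel presentation is a direct sum of $t/2$ copies of (\ref{group1}) before invoking Theorem \ref{evans}. Your phrasing of this as simultaneous block-diagonality of $M_U$ and $M_V$, with each block matching $(M_1,M_2)$, is just the matrix-level formulation of the same decomposition, and your adjacency rules (including the $r\mapsto r+t/2$ shift under $U$-adjacency) agree with Definition \ref{UV-adjacency} and Proposition \ref{polygon_uce}.
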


\begin{proof}
	Fix $t$ and assume without loss of generality that $\alpha \leq \beta$. Analogously to in the proof of Theorem \ref{pointed}, we denote the pointed $2t$-gons in $\mathcal{S}_t(\kappa)$ as follows:
		\begin{itemize}
		\item $(A_r)_{ij} := \big[u_i^r, v_j^r, \ldots , u_i^t, v_j^t, u_i^1, v_j^1, \ldots , u_i^{r-1}, v_j^{r-1} \big]$,
		\item $(B_r)_{ij} := \big[ \bar{u}_i^r, \bar{v}_j^{r-1}, \ldots , \bar{u}_i^1, \bar{v}_j^t, \ldots , \bar{u}_i^{r+1}, \bar{v}_j^r \big]$,
		\item $(C_r)_{ij} := \Big[ \bar{u}_i^{(t/2)+r}, \bar{v}_j^{(t/2)+r-1}, \ldots , \bar{u}_i^1, \bar{v}_j^t, \ldots, \bar{u}_i^{(t/2)+r+1}, \bar{v}_j^{(t/2)+r} \Big]$,
		\item $(D_r)_{ij} := \Big[ u_i^{(t/2)+r}, v_j^{(t/2)+r}, \ldots , u_i^t, v_j^t, u_i^1, v_j^1, \ldots , u_i^{(t/2)+r-1}, v_j^{(t/2)+r-1} \Big]$,
		\end{itemize}
	for $1 \leq i \leq \alpha$, $1 \leq j \leq \beta$, $1 \leq r \leq t/2$, and with addition in superscript indices defined modulo $t$. Note that each $S \in \mathcal{S}_t(\kappa)$ takes one of the above forms. Then
		\begin{align*}
		\coker \big( \mathbf{1}-M_U^T, \mathbf{1}-M_V^T\big) = \Bigg\langle
		(A_r)_{pq}&, (B_r)_{pq}, (C_r)_{pq}, (D_r)_{pq} \Biggm\vert \\ &(A_r)_{pq} = \sum_{i \neq p} (B_r)_{iq} = \sum_{j \neq q} (C_r)_{pj}, \\
		&(B_r)_{pq} = \sum_{i \neq p} (A_r)_{iq} = \sum_{j \neq q} (D_r)_{pj}, \\
		&(C_r)_{pq} = \sum_{i \neq p} (D_r)_{iq} = \sum_{j \neq q} (A_r)_{pj}, \\
		&(D_r)_{pq} = \sum_{i \neq p} (C_r)_{iq} = \sum_{j \neq q} (B_r)_{pj}, \\
		&\text{for }1 \leq p \leq \alpha, 1 \leq q \leq \beta, \text{ and }1 \leq r \leq t/2  \Bigg\rangle .
		\end{align*}
	But, comparing this to (\ref{group1}), we see this is precisely a presentation for the direct sum of $t/2$ copies of $\coker\big( I-M_1^T, I-M_2^T\big)$ as in Theorem \ref{pointed}, and the result follows.
\end{proof}

\begin{thm}\label{polygon_identity}
	Let $\alpha, \beta \geq 3$, let $t \geq 2$ be even, and let $\kappa = \kappa (\alpha, \beta)$ be the complete bipartite graph on $\alpha$ white and $\beta$ black vertices. Then the order of the class of the identity $[\mathbf{1}]$ in $K_0(\cst(\Lambda_t(\kappa)))$ is equal to $g := \gcd(\alpha - 2, \beta - 2)$.
	
	Furthermore, the isomorphism class of $\cst(\Lambda_t(\kappa))$ is completely determined by the K-groups in Theorem \ref{pointed_polygon} and the order of $[\mathbf{1}]$ in $K_0$.
\end{thm}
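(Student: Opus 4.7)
The plan is to reduce the entire statement to the tile case ($t=2$) by exploiting a direct sum structure implicit in the proof of Theorem \ref{pointed_polygon}. Inspection of the $U$- and $V$-adjacency relations in Proposition \ref{polygon_uce} shows that the level index $r \in \lbrace 1, \ldots, t/2 \rbrace$ is preserved by both adjacencies: $(A_r)_{ij}$ is $V$-adjacent only to tiles $(B_r)_{kj}$ with $k \neq i$, and $U$-adjacent only to tiles $(C_r)_{il}$ with $l \neq j$, and analogously for the orbit types $B$, $C$, $D$. Hence the $1$-skeleton of $\Lambda_t(\kappa)$ decomposes into $t/2$ pairwise-disjoint components, each canonically isomorphic to the $1$-skeleton of $\Lambda(\kappa)$. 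Since the $2$-rank graph structure is determined locally by the UCE Property, and the common extension given by (\ref{D}) in the proof of Proposition \ref{polygon_uce} also has level $r$, this lifts to a $2$-graph isomorphism, yielding a $\cst$-algebra decomposition $\cst(\Lambda_t(\kappa)) \cong \bigoplus_{r=1}^{t/2} \cst(\Lambda(\kappa))$.

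For the order of $[\mathbf{1}]$, I would argue exactly as in Theorem \ref{pointed_identity}: this order equals the order of the sum $\Sigma := \sum_{S \in \mathcal{S}_t(\kappa)} S$ in $\coker(\mathbf{1}-M_U^T, \mathbf{1}-M_V^T)$. The level decomposition gives $\Sigma = \sum_{r=1}^{t/2} \Sigma_r$, where $\Sigma_r$ is the sum over the level-$r$ pointed $2t$-gons and lies in the $r$-th direct summand. Applying Theorem \ref{pointed_identity} to each level (each a copy of the tile case) shows that every $\Sigma_r$ has order $g$, and independence of the summands then forces the order of $\Sigma$ to equal the $\lcm$ of these orders, namely $g$.

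For the classification, each summand $\cst(\Lambda(\kappa))$ is determined up to isomorphism by its K-theory and the class of its identity by Corollary \ref{corl_classification}. Consequently $\cst(\Lambda_t(\kappa))$, being a direct sum of $t/2$ identical copies, is determined by the same data: the multiplicity $t/2$ can be read off the shape of the K-groups given in Theorem \ref{pointed_polygon}, and the order $g$ of $[\mathbf{1}]$ together with the K-theory of a single summand then fixes the summand via Corollary \ref{corl_classification}. The main obstacle I anticipate is formalising the $2$-graph decomposition cleanly — in particular, verifying that the factorisation property of $\Lambda_t(\kappa)$ respects the level partition rather than merely the $1$-skeleton — but this reduces to inspecting the explicit formula for the unique common extension in Proposition \ref{polygon_uce}, so I expect only a short verification once the decomposition framework is in place.
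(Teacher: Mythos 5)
Your proposal is correct in substance, and on the classification half it takes a route that differs from (and is in fact more careful than) the paper's own one-line proof. The level-preservation you observe is exactly what underlies the paper's proof of Theorem \ref{pointed_polygon}: both $M_U$ and $M_V$ are block-diagonal with respect to the index $r$, so $\Lambda_t(\kappa)$ is a disjoint union of $t/2$ sub-$2$-graphs, each isomorphic to $\Lambda(\kappa)$ (the bijection $\theta$ is forced by the UCE Property, so the isomorphism of coloured $1$-skeletons does lift), whence $\cst(\Lambda_t(\kappa)) \cong \bigoplus_{r=1}^{t/2}\cst(\Lambda(\kappa))$. For the order of $[\mathbf{1}]$ your argument and the paper's coincide: the class of the identity is the sum of the level sums $\Sigma_r$, each of order $g$ by Theorem \ref{pointed_identity}, so the total order is $g$. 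For the classification, however, the paper appeals to ``similar considerations to those in Section \ref{S_aperiodicity}'', i.e.\ aperiodicity, cofinality, simplicity and pure infiniteness followed by Kirchberg--Phillips applied to $\cst(\Lambda_t(\kappa))$ itself; as your decomposition makes plain, this cannot be read literally for $t \geq 4$, since the $1$-skeleton of $\Lambda_t(\kappa)$ then has $t/2 \geq 2$ components, $\Lambda_t(\kappa)$ is not cofinal, and the algebra is not simple. Your summand-by-summand use of Corollary \ref{corl_classification} followed by reassembly of the direct sum is the argument that actually works, and that is what your approach buys. Two caveats, both at the same level of precision as the paper itself rather than new defects of your argument: Corollary \ref{corl_classification} classifies by the \emph{position} of $[\mathbf{1}]$ in $K_0$, not merely its order, so strictly one must check that the order suffices to pin down the class up to automorphism of the K-groups; and reading off the multiplicity $t/2$ is unambiguous only because Theorem \ref{pointed_polygon} supplies the K-groups already in the factored form $(K_\epsilon(\cst(\Lambda(\kappa))))^{t/2}$ rather than as abstract groups.
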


\begin{proof}
	The result follows from Theorems \ref{pointed_identity} and \ref{pointed_polygon}, and similar considerations to those in Section \ref{S_aperiodicity}.
\end{proof}

If we extend the concept of $U$- and $V$-adjacency from Definition \ref{UV-adjacency} in the obvious way, we can obtain a generalisation of Section \ref{S_unpointed} for unpointed $2t$-gon systems of complete bipartite graphs. Write $\Lambda_t'(\kappa)$ for the induced $2$-rank graph. We realise that the proof of Theorem \ref{unpointed} does not depend on the number of sides $2t$ of the $2t$-gons; hence nor do the K-groups associated to $\Lambda_t'(\kappa)$.

\begin{corl}[to Theorem \ref{unpointed}: K-groups for unpointed $2t$-gon systems]\label{unpointed_polygon}
	Let $\alpha,\beta \geq 2$, and let $\kappa = \kappa(\alpha,\beta)$ be the complete bipartite graph on $\alpha$ white and $\beta$ black vertices. Then
		\[
		K_\epsilon(\cst(\Lambda_t'(\kappa))) \cong K_\epsilon(\cst(\Lambda'(\kappa))),
		\]
	for $\epsilon = 0,1$, and all $t \geq 1$.\qed
\end{corl}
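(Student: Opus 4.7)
The plan is to first extend the definition of adjacency matrices to unpointed $2t$-gon systems in direct analogy with Definition \ref{adjacency_matrices_unpointed}: for unpointed $2t$-gons $A', B' \in \mathcal{S}_t'(\kappa)$, set $M_U'(A', B') = 1$ (respectively $M_V'(A', B') = 1$) if and only if $M_U(A_\bullet, B_\bullet) = 1$ (respectively $M_V(A_\bullet, B_\bullet) = 1$) for some pointed representatives $A_\bullet, B_\bullet$ of $A'$ and $B'$. By Definition \ref{pointed_polygons}, $|\mathcal{S}_t'(\kappa)| = |E(\kappa)| = \alpha\beta$, and I would enumerate these tiles as $A_{ij}' = (u_i^1, v_j^1, \ldots, u_i^t, v_j^t)$ for $1 \leq i \leq \alpha$, $1 \leq j \leq \beta$, exactly as in the $t = 2$ case.

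Next, I would compute the adjacencies on unpointed tiles explicitly. Using the pointed representative $(A_1)_{pq} = [u_p^1, v_q^1, \ldots, u_p^t, v_q^t]$ of $A_{pq}'$ together with Definition \ref{UV-adjacency}, the pointed $2t$-gons $V$-adjacent to $(A_1)_{pq}$ are precisely those of the form $(B_1)_{kq}$ for $k \neq p$ (using the notation of the proof of Theorem \ref{pointed_polygon}), each of which represents the unpointed $2t$-gon $A_{kq}'$. The analogous computation for $U$-adjacency yields that $(D_1)_{pl}$-type tiles for $l \neq q$ represent $A_{pl}'$. Hence
\[
M_V'(A_{pq}', T') = 1 \iff T' = A_{kq}' \text{ for some } k \neq p, \qquad M_U'(A_{pq}', T') = 1 \iff T' = A_{pl}' \text{ for some } l \neq q.
\]
A minor check, parallel to Proposition \ref{tile_complex_uce_unpointed}, then shows that $M_U', M_V'$ commute and satisfy the UCE Property, inducing the $2$-rank graph $\Lambda_t'(\kappa)$, which is row-finite with vertex set of size $\alpha\beta$ and no sources. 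Evans' Theorem \ref{evans} therefore applies.

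The final step is to observe that the adjacency relations above are identical in form to those driving the cokernel calculation of Theorem \ref{unpointed}: in $\coker(\mathbf{1} - (M_U')^T, \mathbf{1} - (M_V')^T)$, each generator satisfies precisely $A_{pq}' = \sum_{i \neq p} A_{iq}' = \sum_{j \neq q} A_{pj}'$. Since the entire sequence of Tietze transformations leading to the presentation in Theorem \ref{unpointed} depends only on these relations together with the ranges $1 \leq p \leq \alpha, 1 \leq q \leq \beta$, and not on $t$, the cokernel is isomorphic to that for the unpointed tile system, and the K-groups follow by Corollary \ref{evans_corl}.

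The main obstacle is the bookkeeping required to verify the claimed identification of $V$- and $U$-adjacent pointed $2t$-gons with unpointed tiles $A_{kq}'$ and $A_{pl}'$ respectively; for even $t$ this is immediate from Definition \ref{UV-adjacency}, but the author's parenthetical remark about extending $U$- and $V$-adjacency ``in the obvious way'' for arbitrary $t$ requires specifying which symmetries of the $2t$-gon one uses when $t$ is odd, so the adjacency matrices remain well-defined and the argument above still applies uniformly.
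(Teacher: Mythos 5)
Your proposal is correct and follows essentially the same route as the paper, which simply observes that the unpointed $U$-/$V$-adjacency relations on the $\alpha\beta$ unpointed $2t$-gons $A'_{ij}$ are exactly $A'_{pq} \sim A'_{kq}$ ($k \neq p$) and $A'_{pq} \sim A'_{pl}$ ($l \neq q$), independently of $t$, so the cokernel computation of Theorem \ref{unpointed} applies verbatim; your version just spells out the details (including the correct caveat that adjacency must be specified for odd $t$, which the paper leaves as ``the obvious way''). The only negligible slip is calling the $U$-adjacent pointed representatives ``$(D_1)_{pl}$-type'' rather than $(C_1)_{pl}$-type, which does not affect the unpointed adjacency relations or the conclusion.
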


\begin{prop}\label{unpointed_polygon_identity}
	Let $\alpha, \beta \geq 3$, let $\kappa = \kappa (\alpha, \beta)$ be the complete bipartite graph on $\alpha$ white and $\beta$ black vertices, and write $g := \gcd(\alpha - 2, \beta - 2)$. Then for all $t \geq 1$, the order of the class of the identity $[\mathbf{1}]$ in $K_0(\cst(\Lambda_t'(\kappa)))$ is equal to $g$ if $g$ is odd, and $g/2$ if $g$ is even.
	
	Furthermore, the isomorphism class of $\cst(\Lambda_t'(\kappa))$ is completely determined by the K-groups in Corollary \ref{unpointed_polygon} and the order of $[\mathbf{1}]$ in $K_0$.\qed
\end{prop}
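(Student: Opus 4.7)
The plan is to mirror the proof of Theorem \ref{polygon_identity}, but adapted to the unpointed setting by exploiting the rigidity already visible in Corollary \ref{unpointed_polygon}. The key observation, analogous to the pointed case, is that once one extends $U$- and $V$-adjacency from Definition \ref{UV-adjacency} to unpointed $2t$-gons, the cokernel $\coker(\mathbf{1}-(M_U')^T, \mathbf{1}-(M_V')^T)$ admits exactly the same generators and relations as the $t=2$ cokernel from Theorem \ref{unpointed}, namely $A_{pq}' = \sum_{i \neq p} A_{iq}' = \sum_{j \neq q} A_{pj}'$, for $1 \le p \le \alpha$, $1 \le q \le \beta$. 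This is precisely why Corollary \ref{unpointed_polygon} holds with the K-groups not depending on $t$.

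First, I would invoke the result of \cite{KimRob2002}, as used in Theorems \ref{pointed_identity} and \ref{unpointed_identity}, to identify the order of $[\mathbf{1}]$ in $K_0(\cst(\Lambda_t'(\kappa)))$ with the order of $\Sigma := \sum_{i,j} A_{ij}'$ in the cokernel. Since the defining relations on the $A_{ij}'$ are identical to those in the $t=2$ setting, the calculation of $\ord(\Sigma)$ is literally the computation performed at the end of Theorem \ref{unpointed_identity}: summing along rows and columns gives $\Sigma = (\alpha-1)\Sigma = (\beta-1)\Sigma$, hence $g\Sigma = 0$; while $A_{pq}' = \sum_{i \neq p}\sum_{j \neq q} A_{ij}'$ combined with $A_{pq}' = \sum_{i \neq p} A_{iq}' = \sum_{j \neq q} A_{pj}'$ yields $\Sigma = 4A_{pq}'$, and combined with $2gA_{pq}' = 0$ this gives $h\Sigma = 0$ when $g = 2h$. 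This establishes the first assertion.

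For the classification statement, I would verify the hypotheses of the Kirchberg--Phillips theorem (Theorem \ref{kirchberg-phillips}) for $\cst(\Lambda_t'(\kappa))$. Separability, nuclearity, unitality, and the UCT come from \cite{Eva2008}, since $\Lambda_t'(\kappa)$ is a row-finite $2$-rank graph with finite vertex set and no sources. Simplicity and pure-infiniteness follow from Theorems \ref{kumjian-pask_criteria} and \ref{kps} exactly as in Proposition \ref{simple_purely-inf}: the $1$-skeleton of $\Lambda_t'(\kappa)$ is strongly connected (whence cofinality), and since each vertex has multiple outgoing $U$- and $V$-edges whenever $\alpha, \beta \ge 3$, one constructs aperiodic infinite paths by the same perturbation trick used in Section \ref{S_aperiodicity}, and one produces short cycles with entrances out of any vertex by traversing a $U$-edge and returning. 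Then Kirchberg--Phillips delivers the classification.

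The main obstacle I expect is ensuring that the aperiodicity and cycle-with-exit conditions really transfer to the unpointed $2t$-gon setting. Unpointed adjacency identifies symmetry classes of pointed $2t$-gons, so the out-degrees at each vertex could in principle be smaller or the graph structure subtler than in the tile case; one must check concretely that for $\alpha, \beta \ge 3$ each vertex still has at least two distinct $U$-neighbours and two distinct $V$-neighbours in $\Lambda_t'(\kappa)$, and that the construction of the aperiodic path $\varphi \in \Lambda_t'(\kappa)^\infty$ using a sparse nonperiodic sequence of choices still goes through. Once that is in place, the proof is essentially a reprise of the pointed-polygon argument.
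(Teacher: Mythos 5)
Your proposal follows exactly the route the paper intends: the paper states this proposition with only a \qed, relying on the observation that the unpointed $2t$-gon cokernel has the same presentation as in Theorem \ref{unpointed}, so the order-of-identity computation of Theorem \ref{unpointed_identity} carries over verbatim, and the classification part repeats the Kirchberg--Phillips verification of Section \ref{S_aperiodicity} as in Proposition \ref{unpointed_classification}. Your write-up simply makes this implicit chain explicit (including the sensible check that each vertex of $\Lambda_t'(\kappa)$ has at least two $U$- and two $V$-neighbours when $\alpha,\beta\geq 3$), so it is correct and essentially the same argument.
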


\begin{corl}[to Theorem \ref{homology}]\label{polygon_homology}
	Let $(\kappa, U, V, \mathcal{S}_t'(\kappa))$ be an unpointed $2t$-gon system, and let $P(\kappa)$ be its associated $2t$-polyhedron. Then the homology groups of $P(\kappa)$ do not depend on $t$, that is:
	\[
	H_n(P(\kappa)) \cong \begin{cases}
	0 & \text{ for }n = 0, \\
	\mathbb{Z}^{\alpha + \beta - 2} & \text{ for } n = 1, \\
	\mathbb{Z}^{(\alpha - 1)(\beta - 1)} & \text{ for } n = 2, \\
	0 & \text{ for } n \geq 3.
	\end{cases}
	\]\qed
\end{corl}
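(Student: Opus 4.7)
The plan is to run the cellular chain argument of Theorem~\ref{homology} with $4$-gons replaced by $2t$-gons throughout. Since $P(\kappa)$ is a path-connected $2$-dimensional CW-complex by construction, the cases $n=0$ and $n\geq 3$ are automatic, and the task reduces to computing $H_1$ and $H_2$.

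The first step is to pick the contractible $2t$-gon $A_{11}'=(u_1^1,v_1^1,\ldots,u_1^t,v_1^t)$ and collapse it to a point, obtaining a homotopy-equivalent complex $P_1(\kappa)$. This identifies the (a priori $2t$) vertices of $P(\kappa)$ lying on the boundary of $A_{11}'$ into a single $0$-cell, and trivialises the $2t$ boundary edges $u_1^r$ and $v_1^r$. In $P_1(\kappa)$ the remaining edges $u_i^r$ ($i\geq 2$) and $v_j^r$ ($j\geq 2$) become loops at the single vertex, and the $2$-cells split into three families: $(\alpha-1)(\beta-1)$ surviving $2t$-gons $A_{ij}'$ for $i,j\geq 2$; $(\alpha-1)$-many $t$-gons $X_i'$ with boundary $u_i^1 u_i^2\cdots u_i^t$, arising from $A_{i1}'$ once the $v_1^r$ collapse; and $(\beta-1)$-many $t$-gons $Y_j'$ obtained analogously from $A_{1j}'$.

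The cellular chain complex then reduces to $0\to C_2\xrightarrow{\partial_2} C_1\to 0$ with
\[
\partial_2(A_{ij}')=\sum_{r=1}^t u_i^r+\sum_{r=1}^t v_j^r, \quad \partial_2(X_i')=\sum_{r=1}^t u_i^r, \quad \partial_2(Y_j')=\sum_{r=1}^t v_j^r.
\]
For $H_2=\ker\partial_2$, the elements $A_{ij}'-X_i'-Y_j'$ ($i,j\geq 2$) are clearly in the kernel, and a rank count --- noting that the conditions on coefficients of $u_i^r$ and $v_j^r$ are independent of $r$ and hence impose only $\alpha+\beta-2$ independent constraints on the $\alpha\beta-1$ generators of $C_2$ --- delivers the claimed $\mathbb{Z}^{(\alpha-1)(\beta-1)}$.

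For $H_1=\coker\partial_2$, the strategy is to apply the $X_i'$ and $Y_j'$ relations $\sum_r u_i^r=0$ and $\sum_r v_j^r=0$ via a sequence of Tietze transformations, in exact analogy with the substitutions $u_i^2=-u_i^1$ and $v_j^2=-v_j^1$ used in the $t=2$ case of Theorem~\ref{homology}, and then check that each $2t$-gon relation $\partial_2(A_{ij}')=0$ becomes automatic once these have been imposed. The main obstacle is precisely this reduction: one must demonstrate that the combined generator/relation system collapses to a free abelian group of rank $\alpha+\beta-2$ uniformly in $t$. This is the only $t$-dependent step of the argument, and it is where the book-keeping matching the stated $t$-independent answer must be carried out carefully, mirroring how in the $t=2$ case the $2t=4$-gon relations turn out to be consequences of the $t=2$-gon relations after substitution.
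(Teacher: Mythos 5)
Your cellular setup is the right one (and is presumably what the paper intends, since it prints this corollary with no proof at all), and your $H_2$ computation is correct and genuinely $t$-independent: $\ker(\partial_2)$ is free of rank $(\alpha\beta-1)-(\alpha+\beta-2)=(\alpha-1)(\beta-1)$. The gap is the step you explicitly defer, the ``book-keeping'' showing that $H_1=C_1/\im(\partial_2)$ collapses to rank $\alpha+\beta-2$ uniformly in $t$. That step cannot be carried out, because it is false for $t\neq 2$. In $P_1(\kappa)$ the group $C_1$ is free on the $t(\alpha-1)+t(\beta-1)$ loops $u_i^r$, $v_j^r$ ($i,j\geq 2$, $1\leq r\leq t$), and $\im(\partial_2)$ is generated by the $\alpha+\beta-2$ elements $\sum_{r=1}^t u_i^r$ and $\sum_{r=1}^t v_j^r$ (the $2t$-gon relations $\partial_2(A_{ij}')$ are indeed consequences of these, as you anticipate). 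But each relation $\sum_{r=1}^t u_i^r=0$ eliminates only \emph{one} of the $t$ generators $u_i^1,\ldots,u_i^t$, so the quotient is free of rank $(t-1)(\alpha+\beta-2)$. Only when $t=2$ does the substitution $u_i^2=-u_i^1$ leave a single generator per index, which is why the analogy with Theorem \ref{homology} does not propagate.

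A quick sanity check confirms this: $P(\kappa)$ has $2t$ vertices, $t(\alpha+\beta)$ edges and $\alpha\beta$ faces, so $\chi(P(\kappa))=2t-t(\alpha+\beta)+\alpha\beta$, which visibly depends on $t$. Since $b_0=1$ by connectivity and $b_2=(\alpha-1)(\beta-1)$ is forced by the rank count above, $b_1=(t-1)(\alpha+\beta-2)$ must depend on $t$. So the corollary as stated holds only for $t=2$ (where it is just Theorem \ref{homology}); for general $t$ the correct statement is $H_1(P(\kappa))\cong\mathbb{Z}^{(t-1)(\alpha+\beta-2)}$ with $H_2$ unchanged. Your proposal is not salvageable as a proof of the stated claim; completing your own computation honestly would instead produce a counterexample to it for every $t\neq 2$.
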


\subsection*{Questions on canonicality}

Corollary \ref{unpointed_polygon} gives us a collection of K-groups corresponding to systems of $2t$-gons with an arbitrary even number of sides $2t$, whereas in the pointed case, Theorem \ref{pointed_polygon} insists on $2t$ being divisible by four. This is due to how we define adjacency in each instance: in the $2t$-polyhedron $P(\kappa)$, each face is adjacent to every other, and since the number of faces is not dependent on $t$, nor are the $U$- and $V$-adjacency matrices in an unpointed $2t$-gon system.

Adjacency in the pointed case is more difficult to define canonically. When $t=2$, and we are dealing with tiles, there is an obvious pair of adjacency functions. We extended these in Definition \ref{UV-adjacency}, thinking of two $2t$-gons as adjacent if we can reflect one horizontally or vertically in order to obtain the form of the other. This works since horizontal and vertical reflections commute, and so the $2t$-gon system will satisfy the UCE Property. If $t$ is not even, then there are no two distinct reflections of $2t$-gons which commute, and preserve the structure of pointed $2t$-gons. We must pick the same two reflections for both adjacency functions, else some combination of rotations and identity transformations. None of these options is a direct extension of our horizontal and vertical adjacency functions from Section \ref{S_cst}, and so there is no natural choice.

We suggest that the following definitions of $U$- and $V$-adjacency for pointed $2t$-gons are the most intuitive $t \geq 3$, based on the idea that adjacent $2t$-gons should have opposite orientations. They do not, however, generalise the tile systems from Sections \ref{S_tile}--\ref{S_unpointed}, themselves being the most natural constructions when $t=2$. Hence, the previous constructions have been the main focus of this paper.

\begin{defn}\label{new_UV-adjacency}
	Let $t \geq 1$ be a fixed arbitrary integer, let $(G,U,V,\mathcal{S}_t)$ be a $2t$-gon system, and let $A = [ x_1,y_1,\ldots ,x_t,y_t] \in \mathcal{S}_t$ be a pointed $2t$-gon.
	
	A pointed $2t$-gon $B \in \mathcal{S}_t$ is $V^*$\textbf{-adjacent} to $A$ if and only if $B = [\bar{x}_1', \bar{y}_t, \ldots , \bar{x}_2', \bar{y}_1]$, for some $x_i' \neq x_i$.
	
	Similarly, we say that a pointed $2t$-gon $C \in \mathcal{S}_t$ is $U^*$\textbf{-adjacent} to $A$ if and only if $C = [ \bar{x}_1, \bar{y}_t', \ldots , \bar{x}_2, \bar{y}_1' ]$, for some $y_i' \neq y_i$. We define the $U^*$\textbf{-} and $V^*$\textbf{-adjacency matrices} $M_U^*$ and $M_V^*$ respectively, as above.
\end{defn}

The proof of the following is almost identical to that of Proposition \ref{polygon_uce}, together with Proposition \ref{tile_2rank}. From this, along with Theorem \ref{evans}, we can deduce Theorem \ref{new_polygons}

\begin{prop}
	Let $(\kappa, U, V, \mathcal{S}_t(\kappa))$ be a $2t$-gon system with adjacency matrices $M_U^*$, $M_V^*$. Then $(\kappa, U, V, \mathcal{S}_t(\kappa))$ induces a $2$-rank graph $\Lambda_t^*(\kappa)$, which is row-finite, with finite vertex set and no sources.\qed
\end{prop}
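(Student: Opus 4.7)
The plan is to mirror the proofs of Propositions \ref{tile_complex_uce}, \ref{tile_2rank}, and \ref{polygon_uce}, substituting in the new notions of $U^*$- and $V^*$-adjacency. The bulk of the work is to verify that $M_U^*$ and $M_V^*$ commute, or equivalently that $(\kappa, U, V, \mathcal{S}_t(\kappa))$ satisfies the UCE Property. Once that is done, the induced $2$-rank graph structure on $\Lambda_t^*(\kappa)$ follows from the general construction of \cite[\S 6]{KumPas2000}, exactly as in Proposition \ref{tile_2rank}.

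Fix a representative pointed $2t$-gon $A = [u_i^1, v_j^1, \ldots, u_i^t, v_j^t] \in \mathcal{S}_t(\kappa)$; pointed $2t$-gons in the other orbits under rotation and reflection are handled by entirely symmetric arguments. Unwinding Definition \ref{new_UV-adjacency} and insisting that the result stay inside $\mathcal{S}_t(\kappa)$, the tiles $V^*$-adjacent to $A$ are exactly
\[
B_k = [\bar u_k^1, \bar v_j^t, \bar u_k^t, \bar v_j^{t-1}, \ldots, \bar u_k^2, \bar v_j^1], \qquad k \neq i,
\]
and the tiles $U^*$-adjacent to $A$ are exactly
\[
C_l = [\bar u_i^1, \bar v_l^t, \bar u_i^t, \bar v_l^{t-1}, \ldots, \bar u_i^2, \bar v_l^1], \qquad l \neq j.
\]
For $\alpha, \beta \geq 2$, both families are non-empty.

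Applying the definitions a second time, I would compute directly that a pointed $2t$-gon $D$ which is simultaneously $U^*$-adjacent to some $B_k$ and $V^*$-adjacent to some $C_l$ must have its $u$-labels drawn from the white vertex $u_k$ (forced by $B_k$, after cancelling bars) and its $v$-labels drawn from the black vertex $v_l$ (forced by $C_l$). Tracking the bars and the reversed cyclic order of labels through the two successive reflections, one finds that the only possibility is
\[
D = [u_k^1, v_l^1, u_k^2, v_l^2, \ldots, u_k^t, v_l^t],
\]
which lies in $\mathcal{S}_t(\kappa)$ precisely because $\kappa$ is complete, so the edge $u_k v_l$ is available. Hence the UCE Property holds and $M_U^* M_V^* = M_V^* M_U^*$, whence $\Lambda_t^*(\kappa)$ is a $2$-rank graph.

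The remaining assertions are routine book-keeping. The vertex set has cardinality $|\mathcal{S}_t(\kappa)| = 2t\alpha\beta < \infty$; row-finiteness follows because each vertex has at most $\alpha - 1$ tiles $V^*$-adjacent and at most $\beta - 1$ tiles $U^*$-adjacent to it; and the no-sources condition holds since the explicit families $B_k$ and $C_l$ above are non-empty for $\alpha, \beta \geq 2$, so every vertex is the range of at least one $U^*$- and one $V^*$-edge (of every multi-degree, by iterating the UCE). The main obstacle throughout is the notational care required to track reversed cyclic orderings and bars through nested reflections; aside from this bookkeeping, the argument is a direct analogue of Proposition \ref{polygon_uce}.
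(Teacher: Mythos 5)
Your proposal is correct and follows essentially the same route the paper intends: the paper omits the proof, remarking only that it is "almost identical" to Proposition \ref{polygon_uce} combined with Proposition \ref{tile_2rank}, and your argument is exactly that — fix a canonical $A=[u_i^1,v_j^1,\ldots,u_i^t,v_j^t]$, identify the $V^*$- and $U^*$-adjacent families $B_k$ ($k\neq i$) and $C_l$ ($l\neq j$), exhibit the unique common extension $D=[u_k^1,v_l^1,\ldots,u_k^t,v_l^t]$ to get the UCE property and commutation of $M_U^*$, $M_V^*$, and then invoke the Kumjian--Pask construction plus the count $|\mathcal{S}_t(\kappa)|=2t\alpha\beta$ for finiteness, row-finiteness and no sources (under the standing assumption $\alpha,\beta\geq 2$). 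Your explicit bookkeeping of bars and reversed orderings checks out against Definition \ref{new_UV-adjacency}, so there is no gap to report.
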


\begin{thm}[K-groups for pointed $2t$-gon systems, $t$ arbitrary] \label{new_polygons}
	Let $a,b \geq 0$, let $t \geq 1$, and let $\kappa = \kappa(a+2, b+2)$ be the complete bipartite graph on $a+2$ white and $b+2$ black vertices. Without loss of generality, we assume that $a \leq b$. Then, for $\epsilon = 0,1$:
		\begin{enumerate}[label=(\roman*)]
			\item If $a=b=0$, then $K_\epsilon(\cst (\Lambda_t^*(\kappa) )) \cong \mathbb{Z}^{4t}$.
			\item If $b \geq 1$ and $a,b$ are coprime, then $K_\epsilon (\cst (\Lambda_t^*(\kappa) )) \cong \mathbb{Z}^{2t(a+1)(b+1)}$.
			\item If $b \geq 1$ and $a,b$ are \emph{not} coprime, then
				\[
				K_\epsilon (\cst (\Lambda_t^*(\kappa) )) \cong \mathbb{Z}^{2t(a+1)(b+1)} \oplus (\mathbb{Z}/g)^t,
				\]
			where $g := \gcd(a,b)$.
		\end{enumerate}
\end{thm}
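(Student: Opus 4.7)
My plan is to exploit the rotational structure of $\mathcal{S}_t(\kappa)$ in order to decompose the $2$-rank graph $\Lambda_t^*(\kappa)$ into $t$ identical pieces, analyse a single piece, and sum. Writing $\alpha=a+2$ and $\beta=b+2$, the $2t\alpha\beta$ pointed $2t$-gons split naturally into ``rotations'' $(A_r)_{pq} := [u_p^r,v_q^r,u_p^{r+1},v_q^{r+1},\ldots,u_p^{r-1},v_q^{r-1}]$ and ``reflections'' $(B_r)_{pq} := [\bar u_p^r,\bar v_q^{r-1},\bar u_p^{r-1},\bar v_q^{r-2},\ldots,\bar u_p^{r+1},\bar v_q^r]$ for $1\leq p\leq\alpha$, $1\leq q\leq\beta$, $1\leq r\leq t$ (superscripts modulo $t$). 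A direct check from Definition \ref{new_UV-adjacency} shows that $V^*$-adjacent to $(A_r)_{pq}$ are exactly the $(B_r)_{p',q}$ with $p'\neq p$, and $U^*$-adjacent are the $(B_r)_{p,q'}$ with $q'\neq q$, with the symmetric statements for $(B_r)_{pq}$. Since the index $r$ is preserved under every adjacency, the graph decomposes as $\Lambda_t^*(\kappa) = \bigsqcup_{r=1}^t \Lambda_t^{(r)}$, and Evans' cokernel splits as a direct sum $\bigoplus_{r=1}^t \coker_r$.

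Within a fixed component (dropping the subscript $r$), the cokernel is presented by
\[
A_{pq} = \sum_{p'\neq p} B_{p',q} = \sum_{q'\neq q} B_{p,q'}, \qquad B_{pq} = \sum_{p'\neq p} A_{p',q} = \sum_{q'\neq q} A_{p,q'}.
\]
The pivotal observation is that $\Sigma := A_{pq} + B_{pq}$ is a constant on the component: adding $B_{pq}$ to the first identity gives $\Sigma = \sum_{p'}B_{p',q}$, and the remaining three identities yield $\Sigma = \sum_{q'}B_{p,q'} = \sum_{p'}A_{p',q} = \sum_{q'}A_{p,q'}$. Substituting $B_{pq} = \Sigma - A_{pq}$ into $\sum_p B_{p,q} = \Sigma$ forces $(\alpha-2)\Sigma = 0$, and analogously $(\beta-2)\Sigma = 0$, so $g\Sigma = 0$ with $g = \gcd(a,b)$.

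I would then eliminate $A_{1,q}$ using the column-sum relation for each $q$, and subsequently $A_{p,1}$ ($p\geq 2$) using the row-sum for each such $p$. The residual $p=1$ row-sum collapses to $(a-b)\Sigma = 0$, which is already implied by $g\Sigma=0$, leaving the clean presentation $\coker_r\cong\mathbb{Z}/g \oplus \mathbb{Z}^{(a+1)(b+1)}$ (interpreting $\mathbb{Z}/0$ as $\mathbb{Z}$ to cover $a=b=0$). Direct-summing over $r$ gives $\coker\cong(\mathbb{Z}/g)^t\oplus\mathbb{Z}^{t(a+1)(b+1)}$, and Theorem \ref{evans}, which doubles the torsion-free rank, produces the three claimed K-groups. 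The boundary cases $\alpha=2$ (and \emph{a fortiori} $\alpha=\beta=2$) require only the minor adjustment that $p'\neq p$ now determines $p'$ uniquely, producing identifications among tiles but leaving the formula intact. I expect the main care to lie in the elimination bookkeeping of the third step, specifically in verifying that the ``missing'' $p=1$ row-sum contributes no torsion beyond the single generator $g\Sigma=0$; the substantive input is the constancy of $\Sigma$, which collapses the relation structure dramatically compared with the more intricate cokernel of Theorem \ref{pointed}.
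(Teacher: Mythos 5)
Your proposal is correct and follows essentially the same route as the paper: the same labelling of $\mathcal{S}_t(\kappa)$ by $(A_r)_{pq}$, $(B_r)_{pq}$, elimination of the $(B_r)_{pq}$, the observation that all row and column sums agree (your constant $\Sigma$ is exactly the paper's relation $(J_r)_q=(I_r)_p$, with $g\Sigma=0$), and a Tietze reduction to $\mathbb{Z}^{(a+1)(b+1)}\oplus\mathbb{Z}/g$ per index $r$ before applying Evans' theorem. The only cosmetic difference is that you make the decomposition into $t$ adjacency-invariant pieces explicit at the outset, whereas the paper carries $r$ as an index throughout; the relations never mix values of $r$ in either treatment.
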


\begin{proof}
	The proof unsurprisingly follows the same lines as those of Theorems \ref{pointed}, \ref{unpointed}, and \ref{pointed_polygon}. Write $\alpha := a+2$, $\beta := b+2$, and let $\beta \geq 3$. We denote the pointed $2t$-gons in $\mathcal{S}_t(\kappa)$ as:
	\begin{itemize}
		\item $(A_r)_{ij} := \big[u_i^r, v_j^r, \ldots , u_i^t, v_j^t, u_i^1, v_j^1, \ldots , u_i^{r-1}, v_j^{r-1} \big]$,
		\item $(B_r)_{ij} := \big[ \bar{u}_i^r, \bar{v}_j^{r-1}, \ldots , \bar{u}_i^1, \bar{v}_j^t, \ldots , \bar{u}_i^{r+1}, \bar{v}_j^r \big]$,
	\end{itemize}
	for $1 \leq i \leq \alpha$, $1 \leq j \leq \beta$, $1 \leq r \leq t$, and with addition in superscript indices defined modulo $t$. Observe that each $S \in \mathcal{S}_t(\kappa)$ is either of the form $(A_r)_{ij}$ or $(B_r)_{ij}$. Then
		\begin{align*}
		\coker=\coker\big( \mathbf{1} - (M_U^*)^T, \mathbf{1} - (M_V^*)^T \big) = \Bigg\langle &(A_r)_{pq}, (B_r)_{pq} \Biggm \vert \\
		&(A_r)_{pq} = \sum_{i \neq p} (B_r)_{iq} = \sum_{j \neq q} (B_r)_{pj},\\
		&(B_r)_{pq} = \sum_{i \neq p} (A_r)_{iq} = \sum_{j \neq q} (A_r)_{pj},\\
		&\text{for } 1 \leq p \leq \alpha, 1 \leq q \leq \beta, \text{ and } 1 \leq r \leq t \Bigg\rangle.
		\end{align*}
	As in the proof of Theorem \ref{pointed}, define $(J_r)_q := \sum_{i=1}^\alpha (A_r)_{iq}$, and $(I_r)_p := \sum_{j=1}^\beta (A_r)_{pj}$. By means of a sequence of Tietze transformations, and using some observations from previous proofs, we see that the above presentation is equivalent to
		\begin{align*}
		\coker &= \Bigg\langle (A_r)_{pq} \Biggm \vert (A_r)_{pq} = \sum_{i \neq p} \sum_{k \neq i} (A_r)_{kq} = \sum_{j \neq q} \sum_{l \neq j} (A_r)_{pl}, \sum_{i \neq p} (A_r)_{iq} = \sum_{j \neq q} (A_r)_{pj} \Bigg\rangle \\
		&= \Bigg\langle (A_r)_{pq} \Biggm \vert (\alpha - 2)(J_r)_q = (\beta - 2)(I_r)_p = 0, \sum_{i \neq p} (A_r)_{iq} = \sum_{j \neq q} (A_r)_{pj} \Bigg\rangle \\
		&= \Bigg\langle (A_r)_{pq} \Biggm \vert (\alpha - 2)(J_r)_q = (\beta - 2)(I_r)_p = 0, (J_r)_q = (I_r)_p, \text{ for all }p,q \Bigg\rangle.
		\end{align*}
	We can rewrite each $(A_r)_{i1}$ and $(A_r)_{1j}$ in terms of the other $(A_r)_{ij}$, the $(J_r)_q$, and the $(I_r)_p$, and hence remove them from the list of generators. Then, since $(J_r)_q = (I_r)_p$ for all $1 \leq p \leq \alpha$, $1 \leq q \leq \beta$ we can remove all-but-one of these from the list of generators as well, leaving:
		\begin{multline}\label{group3}
		\coker = \langle (A_r)_{pq}, (J_r)_1 \mid (\alpha - 2)(J_r)_1 = (\beta - 2)(J_r)_1 = 0, \\
		\text{for }2 \leq p \leq \alpha, 2 \leq q \leq \beta, \text{ and }1 \leq r \leq t \rangle.
		\end{multline}
	We substitute $a = \alpha -2$, $b = \beta - 2$, and write $g := \gcd(a,b)$. Then (\ref{group3}) is a presentation for $\mathbb{Z}^{t(a+1)(b+1)} \oplus (\mathbb{Z}/g)^t$ if $g > 1$, and $\mathbb{Z}^{t(a+1)(b+1)}$ otherwise. If $\alpha = \beta = 2$, then (\ref{group3}) gives a presentation for $\mathbb{Z}^2$. Together with Theorem \ref{evans}, this gives the desired result.
\end{proof}

\begin{prop}\label{new_unpointed_polygon_identity}
	Let $\alpha, \beta \geq 3$, and let $\kappa = \kappa (\alpha, \beta)$ be the complete bipartite graph on $\alpha$ white and $\beta$ black vertices. Then for all $t \geq 1$, the order of the class of the identity $[\mathbf{1}]$ in $K_0(\cst(\Lambda_t^*(\kappa)))$ is equal to $g := \gcd(\alpha-2,\beta-2)$.
	
	Furthermore, the isomorphism class of $\cst(\Lambda_t^*(\kappa))$ is completely determined by the K-groups in Theorem \ref{new_polygons} and the order of $[\mathbf{1}]$ in $K_0$.\qed
\end{prop}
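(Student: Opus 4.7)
The plan is to mirror the pattern established in Theorems \ref{pointed_identity}, \ref{polygon_identity}, and \ref{unpointed_polygon_identity}, treating the two assertions of the proposition separately. For the claim about the order of $[\mathbf{1}]$, I would invoke the result of \cite{KimRob2002}, which identifies the order of $[\mathbf{1}]$ in $K_0(\cst(\Lambda_t^*(\kappa)))$ with the order of $\Sigma := \sum_{S \in \mathcal{S}_t(\kappa)} S$ in the cokernel $\coker(\mathbf{1} - (M_U^*)^T, \mathbf{1} - (M_V^*)^T)$. From the presentation of this cokernel obtained in the proof of Theorem \ref{new_polygons}, the torsion subgroup is $(\mathbb{Z}/g)^t$, with each $(J_r)_1$ generating an order-$g$ summand. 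Using the relations $(A_r)_{pq} = \sum_{i \neq p}(B_r)_{iq} = \sum_{j \neq q}(B_r)_{pj}$ and $(J_r)_q = (I_r)_p = (J_r)_1$ for all $p, q$, I would rewrite $\Sigma = \sum_r(\Sigma_A^r + \Sigma_B^r)$ entirely in terms of the $(J_r)_1$ and read off that its order equals $g$.

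For the classification assertion, I would apply the Kirchberg-Phillips theorem (Theorem \ref{kirchberg-phillips}) by verifying the standard hypothesis package. Separability, nuclearity, unitality, and the UCT follow from \cite{Eva2008}, since $\Lambda_t^*(\kappa)$ is row-finite with finite vertex set and no sources (as observed just before Theorem \ref{new_polygons}). Simplicity and pure-infiniteness are obtained along the lines of Proposition \ref{simple_purely-inf} via Theorems \ref{kumjian-pask_criteria} and \ref{kps}: cofinality of $\Lambda_t^*(\kappa)$ is automatic from the strong-connectedness of its $1$-skeleton for $\alpha, \beta \geq 3$; the Aperiodicity Condition is established by the same construction as in the lemma preceding Proposition \ref{simple_purely-inf}, using that each vertex has at least two $U^*$-neighbours and at least two $V^*$-neighbours (when $\alpha, \beta \geq 3$) and that the UCE property holds; and the cycle-with-entrance condition needed by Theorem \ref{kps} follows immediately from the symmetry of $M_U^*$ and $M_V^*$, which can be read off directly from Definition \ref{new_UV-adjacency}, giving bidirectional edges.

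The main subtlety I anticipate is the order-of-$\Sigma$ calculation: one must carefully bookkeep coefficients when expressing $\Sigma$ as a $\mathbb{Z}$-linear combination of the $(J_r)_1$ modulo the cokernel relations, and then verify that, in the torsion subgroup $(\mathbb{Z}/g)^t$, the resulting coefficient of $\sum_r (J_r)_1$ acts with order precisely $g$ rather than a proper divisor. The classification portion, by contrast, is expected to be routine, since the arguments of Section \ref{S_aperiodicity} transfer to the $U^*$-$V^*$ setting with only cosmetic changes once bidirectionality of the $1$-skeleton has been noted.
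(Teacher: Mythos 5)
Your overall strategy (KimRob2002 to reduce to the order of the all-ones class in the cokernel, plus a Kirchberg--Phillips package for the second claim) is the one the paper intends, but the two places where you defer the work are exactly where the argument is not routine, and as written neither goes through.

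For the order of $[\mathbf{1}]$: carry out the bookkeeping you postpone. In the cokernel presented in the proof of Theorem \ref{new_polygons} one has $(B_r)_{pq}=\sum_{i\neq p}(A_r)_{iq}$ and $(J_r)_q=(I_r)_p=(J_r)_1$ for all $p,q$, so the class of the all-ones vector is $\sum_r\sum_{i,j}\bigl((A_r)_{ij}+(B_r)_{ij}\bigr)=\sum_r\alpha\beta\,(J_r)_1$. Since $g$ divides $\alpha-2$ and $\beta-2$, we have $\alpha\beta\equiv 4\pmod{g}$, so the element whose order you must determine is $4\sum_r(J_r)_1$ inside $(\mathbb{Z}/g)^t\oplus(\text{free part})$, and its order is $g/\gcd(g,4)$. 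When $g$ is odd this is $g$ and your ``read off'' is fine; but when $g$ is even the naive computation gives a proper divisor of $g$ --- precisely the phenomenon that produces the $g/2$ in Theorem \ref{unpointed_identity}. So the coefficient of $\sum_r(J_r)_1$ is $4$, not a unit mod $g$ in general, and nothing in your proposal supplies the extra argument (or caveat) needed to conclude the order is exactly $g$; this is a genuine gap, not a routine verification.

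For the classification claim, your appeal to cofinality fails for $t\geq 2$: both $U^*$- and $V^*$-adjacency preserve the rotation index $r$ (a $V^*$-neighbour of $(A_r)_{ij}$ is some $(B_r)_{kj}$ with $k\neq i$, and a $U^*$-neighbour is some $(B_r)_{il}$ with $l\neq j$), so the $1$-skeleton of $\Lambda_t^*(\kappa)$ has $t$ connected components and is not strongly connected. Consequently $\Lambda_t^*(\kappa)$ is not cofinal and $\cst(\Lambda_t^*(\kappa))$ is not simple; it is a direct sum of $t$ mutually isomorphic summands, which is also visible in Theorem \ref{new_polygons}, where the K-groups are $t$ copies of a fixed group. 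Theorem \ref{kirchberg-phillips} therefore cannot be applied to the whole algebra as you propose. The correct route is to run the Section \ref{S_aperiodicity} arguments (aperiodicity, cofinality, cycle with entrance) within a single component, classify each simple purely infinite summand $\cst(\Lambda_1^*(\kappa))$, and then argue separately that the direct sum of $t$ isomorphic such algebras is determined by the stated invariants.
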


\section*{Acknowledgements}

The author wishes to acknowledge their advisor Alina Vdovina for introducing them to the subject and for her guidance throughout, and the referee for detailed and enlightening feedback. The author expresses their gratitude to Newcastle University for providing an excellent research environment, and to the EPSRC for funding this project.

\nocite{KonVdo2015}

\bibliographystyle{amsplain}
\bibliography{K-theory_bib}

\vspace{1cm}

\addresses

\end{document}